\crefname{equation}{}{}
\Crefname{equation}{Equation}{Equations}
\let\Cref\crtCref
\let\cref\crtcref
\newtheorem{theorem}{Theorem}[]
\newtheorem{lemma}[theorem]{Lemma}
\newtheorem{proposition}[theorem]{Proposition}
\newtheorem{corollary}[theorem]{Corollary}
\newtheorem{conjecture}[theorem]{Conjecture}
\theoremstyle{definition}
\newtheorem{definition}[theorem]{Definition}
\newtheorem{example}[theorem]{Example}
\newtheorem{remark}[theorem]{Remark}
\newcommand{\ol}[1]{\overline{#1}}
\newcommand{\wt}[1]{\widetilde{#1}}
\newcommand{\wh}[1]{\widehat{#1}}
\newcommand{\dessymb}{\wh{\!}}
\newcommand{\ds}[1]{\underset{\dessymb}{#1}}
\newcommand{\cdessymb}{\!^{\;\circ}}
\newcommand{\RR}{\mathbb{R}}
\newcommand{\ZZ}{\mathbb{Z}}
\newcommand{\HH}{\mathcal{H}}
\newcommand{\defn}[1]{{\color{RoyalBlue}\bf#1}}
\newcommand{\defnn}[1]{{\color{RoyalBlue}#1}}
\newcommand{\red}[1]{{\color{red}#1}}
\newcommand{\hypoct}{\mathfrak{B}}
\newcommand{\highlight}[1]{\colorbox{RoyalBlue!20}{$\displaystyle#1$}}
\newcommand\YS{\begin{tikzpicture}[scale = .1]}
\newcommand\YSF{\begin{tikzpicture}[scale = .4]}
\newcommand\YB[2]{\draw[fill = yellow] (#1,#2) rectangle (#1+1,#2+1);}
\newcommand\YBF[3]{\draw[fill = yellow] (#1,#2) rectangle (#1+1,#2+1) node[pos=.5] {\scriptsize $#3$};}
\newcommand\YF{\end{tikzpicture}}
\DeclareMathOperator{\Conv}{Conv}
\DeclareMathOperator{\Vol}{Vol}
\DeclareMathOperator{\BAsc}{BAsc}
\DeclareMathOperator{\basc}{basc}
\DeclareMathOperator{\CDes}{CDes}
\DeclareMathOperator{\cdes}{cdes}
\DeclareMathOperator{\des}{des}
\DeclareMathOperator{\exc}{exc}
\DeclareMathOperator{\fdes}{fdes}
\DeclareMathOperator{\fexc}{fexc}
\DeclareMathOperator{\fneg}{neg}
\DeclarePairedDelimiter{\floor}{\lfloor}{\rfloor}
\DeclarePairedDelimiter{\angbra}{\langle}{\rangle}
\DeclarePairedDelimiter{\dbra}{[\![}{]\!]}
\title[$h^*$-polynomials of type C hypersimplices]{The $h^*$-polynomials of type C hypersimplices}
\address{
Laboratoire d’Algèbre, de Combinatoire et d’Informatique Mathématique (LACIM)\\
Universit\'e du Qu\'ebec \`a Montr\'eal\\
CP 8888 Succ. Centre-Ville\\
Montr\'eal, Qu\'ebec, H3C 3P8\\ Canada}
\author[A.~Abram]{Antoine~Abram}
\address[Antoine Abram]{}
\email{abram.antoine@courrier.uqam.ca}
\urladdr{\url{http://sites.google.com/view/antoineabram}}
\author[J.~Bastidas]{Jose~Bastidas}
\address[Jose Bastidas]{}
\email{bastidas\_olaya.jose\_dario@uqam.ca}
\urladdr{\url{https://sites.google.com/view/bastidas}}
\newtheorem{mainthm}{Theorem}
\newtheorem{maincor}[mainthm]{Corollary}
\begin{document}

\begin{abstract}
We study the Ehrhart theory of hypersimplices of type C, as introduced by Lam and Postnikov for general crystallographic root systems. The $h^*$-polynomials of classical hypersimplices are known to relate to various Eulerian statistics on the symmetric group. In this paper, we introduce a new statistic and partial order on signed permutations, which we use to derive explicit formulas for the $h^*$-polynomials of type C hypersimplices. Additionally, we explore connections with other statistics, including flag-excedances and circular descents, flag-descents, and Coxeter descents.
\end{abstract}
\maketitle

\noindent \emph{Keywords:}
{Alcoved polytopes, Signed permutation, permutation statistics, Coxeter groups, Weak order, Ehrhart $h^*$-polynomials.}

\setcounter{tocdepth}{1}
\tableofcontents

\section{Introduction}
\label{s:intro}

Let $P \subset \RR^n$ be a $d$-dimensional polytope with vertices on a lattice $\Lambda \subset \RR^n$.
Ehrhart \cite{Ehrhart62} showed that the function
$
\defnn{L_P} = L^\Lambda_P : \ZZ_{\geq 0} \to \ZZ_{\geq 0} : r \mapsto | rP \cap \Lambda |
$
is a polynomial of degree $d$ in $r$.
Later, Stanley \cite{S80decomp} showed that the polynomial $\defnn{h_P^*(t)}$, defined from $L_P(r)$ by
\begin{equation}
\label{eq:def-h*}
\sum_{r \geq 0} L_P(r) t^r = \frac{h_P^*(t)}{(1-t)^{d+1}},
\end{equation}
has \emph{only} nonnegative integer coefficients.
The polynomial $\defnn{h_P^*(t)}$ is known as the \defn{(Ehrhart) $h^*$-polynomial} of~$P$.
We refer the reader to \cite[Chapter 3]{BR-discretely} for a comprehensive introduction to Ehrhart theory.

Understanding the coefficients of $L_P$ and $h^*_P$ for arbitrary polytopes $P$ is a difficult problem that has motivated substantial research in recent years; see, for instance, \cite{BJ24Ehalcoved,CL15Eh1,Ferroni21Eh2,Ferroni22Eh3,HJ16-flag,HLO15faceopen,J24Ehpositroid,SS18Lipschitz}.
However the sum of the coefficients of $h^*_P$ is simple to understand: it equals $\Vol(P)$, the normalized volume of $P$.

In this paper, we study the $h^*$-polynomial of \emph{type C hypersimplices}.

\subsubsection*{The classical (type A) case}

For positive integers $k < n$, the (type A) \defn{hypersimplex} $\defnn{\Delta_{n,k}} \subset \RR^n$ is the polytope whose vertices are all the $0/1$-vectors in $\RR^n$ with exactly $k$ ones.
Hypersimplices are fundamental objects in algebraic combinatorics; for example, they are the matroid polytopes of uniform matroids and also arise as the image of the Grassmannian under the moment map.

A classical result, implicit in the work of Laplace, states that the volume of the hypersimplex is given by the \defn{Eulerian numbers}. Specifically,
\begin{equation}
\label{eq:A-vol-Eul}
\Vol(\Delta_{n,k}) = \defnn{A_{n-1,k-1}} := | \{ w \in \mathfrak{S}_{n-1} \mid \des(w) = k-1 \} |,
\end{equation}
where $\defnn{\des(w)}$ denotes the \defn{descent statistic} (the number of positions $i$ such that $w_i > w_{i+1}$).
Thus, the coefficients of the $h^*$-polynomial of the hypersimplices \emph{refine} the Eulerian numbers.
Two decades ago, Katzman \cite{Katzman05} provided an explicit formula for these coefficients, though it was not manifestly positive. Only recently, Kim \cite{Kim20} proved a conjecture by Early \cite{Early17}, offering a combinatorial interpretation of these coefficients using a particular type of decorated set partitions. However, we are not aware of an interpretation of these coefficients via a joint distribution of two statistics on~$\mathfrak{S}_{n-1}$.

For the \defn{half-open hypersimplices} $\Delta'_{n,k}$, obtained from $\Delta_{n,k}$ by removing the facet lying in the hyperplane $x_n = 1$ when $k \geq 2$, Li \cite{Li12} provided the following two elegant, explicitly positive formulas for their $h^*$-polynomials using permutation statistics:
\begin{equation}
\label{eq:Li's}
h_{\Delta'_{n,k}}^*(t) = \sum_{\substack{w \in \mathfrak{S}_{n-1} \\ \des(w) = k-1 }} t^{{\rm cover}(w)}
\qquad\text{and}\qquad
h_{\Delta'_{n,k}}^*(t) = \sum_{\substack{w \in \mathfrak{S}_{n-1} \\ \exc(w) = k-1 }} t^{\des(w)}.
\end{equation}
Here $\defnn{\exc}$ denotes the \defn{excedance} statistic and $\defnn{{\rm cover}(w)}$ denotes the number of elements covered by $w$ in a certain partial order defined on $\mathfrak{S}_{n-1}$. This partial order is--up to a simple relabeling--exactly the order on cyclic permutations defined by Abram, Chapelier-Laget, and Reutenauer \cite{ACR21}. The second formula was originally conjectured by Stanley. Observe that, since $\des$ and $\exc$ have the same distribution, plugging $t = 1$ in either of the formulas above recovers \Cref{eq:A-vol-Eul}.

\subsubsection*{New results for type C hypersimplices}

In a series of influential papers, Lam and Postnikov \cite{LP07alcoved1,LP18alcoved2} define generalized hypersimplices for any crystallographic root system $\Phi$.
The \defn{$\Phi$-hypersimplex $\Delta_{\Phi,k}$} is determined by inequalities of the form $0 \leq \langle x , \alpha \rangle \leq 1$ for all simple roots $\alpha$ and $k-1 \leq \langle x , \theta \rangle \leq k$, where $\theta$ is the highest root of $\Phi$.

Explicitly, for the \defn{root system of type $C_n$} with simple roots $2e_1 \,,\, e_2-e_1\,,\,e_3-e_2,\dots,e_n-e_{n-1}$, these polytopes are
\begin{equation}
\label{eq:def-hypersimplex}
\defnn{\Delta_{C_n,k}} : = \{ x \in \RR^n \mid
0 \leq 2 x_1 \,,\, x_2 - x_1 \,,\, \dots \,,\, x_n - x_{n-1} \leq 1 \,\text{ and }\, k-1 \leq 2x_n \leq k \},
\end{equation}
for $k = 1 , \dots, 2n-1$.
The \defn{half-open hypersimplex $\Delta'_{C_n,k}$} is obtained by replacing the weak inequality $k-1 \leq 2x_n$ in \Cref{eq:def-hypersimplex} by the strict inequality $k-1 < 2x_n$; except at $k=1$, for which $\Delta'_{C_n,1} := \Delta_{C_n,1}$.
The vertices of $\Delta_{C_n,k}$ lie on the half-integer lattice ${\Lambda := \frac{1}{2}\ZZ^n}$, with respect to which we compute its Ehrhart and $h^*$-polynomial.

Let $\defnn{\hypoct_n}$ denote the group of \defn{signed permutations}: permutations $w$ of the set $\{-n,\dots,-1,1,\dots,n\}$ such that $w(-i) = - w(i)$. Our formulas are given in terms of statistics on the following subset of~$\hypoct_n$.

\begin{definition}
\label{def:setXn}
Let $\defnn{X_n} \subset \hypoct_n$ be the following collection of signed permutations:
\[
\defnn{X_n} := \{ w \in \hypoct_n \mid w^{-1}(1) > 0 \}.
\]
\end{definition}

Our first result relies on a shelling of the unimodular triangulation of $\Delta_{C_n,k}$ as an \defn{alcoved polytope}. Explicitly, we construct a partial order on $X_n$ whose cover relations are determined by the \defn{big ascent} statistic $\defnn{\basc}$ introduced in \cref{s:poset}. This poset serves as a combinatorial model for a certain interval of the \emph{weak order} on the alcoves of the affine reflection arrangement of type C.

\begin{mainthm}\label{t:cdes-basc}
For $n \geq 1$ and $k \geq 1$, the $h^*$-polynomial of the half-open hypersimplex $\Delta'_{C_n,k}$ is
\[
h_{\Delta'_{C_n,k}}^*(t) =
\sum_{ \substack{ w \in X_n : \\ \cdes(w^{-1}) = k}} t^{\basc(w)}.
\]
\end{mainthm}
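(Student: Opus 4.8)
The plan is to compute the $h^*$-polynomial through the canonical alcove triangulation of $\Delta_{C_n,k}$, using the weak order to supply a shelling. Since $\Delta_{C_n,k}$ is an alcoved polytope, the alcoves of the affine type-C arrangement that it contains form a unimodular triangulation with respect to $\Lambda = \frac12\ZZ^n$. The engine of the computation is the half-open simplex formula: a $d$-dimensional half-open unimodular simplex from which $j$ facets have been deleted has $h^*$-polynomial exactly $t^j$. Hence, if one partitions $\Delta'_{C_n,k}$ into disjoint half-open unimodular simplices, one per alcove, then $h^*_{\Delta'_{C_n,k}}(t) = \sum_\sigma t^{r(\sigma)}$, where $r(\sigma)$ is the number of facets deleted from the alcove $\sigma$. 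Producing this partition and identifying $r(\sigma)$ with $\basc$ is the whole content.

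First I would set up the dictionary between alcoves and signed permutations. Building on the alcove–Weyl group correspondence for type C, I would show that the alcoves occurring across $\bigcup_k \Delta_{C_n,k}$ are indexed exactly by $X_n$ — the constraint $w^{-1}(1) > 0$ cutting $\hypoct_n$ down to those alcoves that actually appear, thereby accounting for a symmetry that turns the correspondence into a bijection — and that the layer of a given alcove, i.e.\ the value of $k$ with $k-1 \le 2x_n \le k$, is recorded by $\cdes(w^{-1}) = k$. Verifying this amounts to reading off from a point of the alcove which affine walls of type $\widetilde{C}_n$ bound it and matching $\lceil 2x_n \rceil$ against the circular-descent count of $w^{-1}$.

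Next I would invoke the poset on $X_n$, whose cover relations model an interval of the weak order on alcoves. The weak order (equivalently, the ordering by number of separating walls from a base alcove) is known to induce a shelling of the alcove triangulation of an alcoved polytope, so any linear extension of the poset furnishes a shelling order. I would then make each alcove half-open by deleting the facets it shares with earlier alcoves in this order, together with the globally removed boundary facet $2x_n = k-1$ when the alcove abuts it; shellability guarantees the resulting half-open pieces are disjoint and cover $\Delta'_{C_n,k}$, so the half-open simplex formula applies.

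The crux — and the step I expect to be the main obstacle — is the local wall count: proving that the number of deleted facets of the alcove indexed by $w$ equals $\basc(w)$. This requires translating each $\widetilde{C}_n$-wall of the alcove into a descent-type condition on $w$ and showing that the event ``this wall separates $w$ from an earlier alcove in the weak order'' coincides exactly with the defining condition of a big ascent, with the half-open boundary facet accounting for the remaining contribution and degenerating gracefully when $k = 1$, where $\Delta'_{C_n,1} = \Delta_{C_n,1}$ is closed. Once this identification is secured, summing $t^{\basc(w)}$ over all $w \in X_n$ with $\cdes(w^{-1}) = k$ produces the claimed formula.
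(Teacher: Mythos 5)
Your proposal has the same skeleton as the paper's proof: the alcove triangulation, the dictionary between alcoves in $\Pi_{C_n}$ and $X_n$ with layers recorded by $\cdes(w^{-1})$ and lower covers by $\basc$ (this is exactly the paper's poset isomorphism $A:X_n\to\mathcal{I}_{C_n}$), and a weak-order shelling feeding a Stanley-type half-open count. The genuine gap is the step ``the weak order \dots is known to induce a shelling of the alcove triangulation of an alcoved polytope, so any linear extension of the poset furnishes a shelling order.'' The known result (the one the paper cites from Reading) is that every linear extension of the weak order shells the complex of \emph{all} regions of the arrangement; this transfers to \emph{order ideals} of the weak order, since an ideal can be listed as an initial segment of a linear extension and initial segments of shellings are shellings. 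But the alcoves lying in a single layer $\Delta_{C_n,k}$ with $k\ge 2$ do \emph{not} form an order ideal (the layer does not contain $A_\circ$), and for a general alcoved polytope the restricted claim is false. For instance, in the affine $A_2$ arrangement take the side-two triangle $T=\{x \mid -1\le\langle x,\alpha_1\rangle\le 1,\ 5\le\langle x,\alpha_2\rangle\le 7,\ 4\le\langle x,\theta\rangle\le 6\}$: its two alcoves $U_1$ (with $\langle x,\theta\rangle\le 5$, $\langle x,\alpha_1\rangle\le 0$) and $U_2$ (with $\langle x,\theta\rangle\ge5$, $\langle x,\alpha_1\rangle\ge 0$) are incomparable minimal elements of the restricted weak order meeting only in a vertex, so the linear extension beginning $U_1\prec U_2$ violates the shelling condition, and the resulting piece $U_2\setminus U_1$ is a simplex minus a vertex, to which the formula $h^*=t^j$ does not apply. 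So shelling one hypersimplex layer by the restricted weak order is an unproved (and not ``known'') assertion. A second, smaller point you wave through: even granted such a shelling, to conclude that deleting the facets on $2x_n=k-1$ yields a partition of $\Delta'_{C_n,k}$ you must show that the \emph{first} alcove in your order containing a given boundary point has an actual facet (not merely a lower-dimensional face) on $H_{\theta,k-1}$; this needs a localization/star argument you do not supply.

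Both issues are repaired by the maneuver the paper uses, and which you should adopt: shell not the single layer but the union $P_{n,k}=\Delta_{C_n,1}\cup\dots\cup\Delta_{C_n,k}$, whose alcoves \emph{do} form an order ideal of $\mathcal{I}_{C_n}$ (the hyperplane $H_{\theta,k}$ separates $A_\circ$ from precisely the alcoves outside $P_{n,k}$), so the cited shelling result applies verbatim. Stanley's formula then gives $h^*_{P_{n,k}}(t)=\sum t^{\basc(w)}$ over $w\in X_n$ with $\cdes(w^{-1})\le k$, because facets shared with earlier alcoves are exactly lower covers in the weak order, counted by $\basc$. Finally, since $P_{n,k}$ is the disjoint union of the half-open layers $\Delta'_{C_n,1},\dots,\Delta'_{C_n,k}$, all of full dimension, Ehrhart additivity yields $h^*_{\Delta'_{C_n,k}}(t)=h^*_{P_{n,k}}(t)-h^*_{P_{n,k-1}}(t)$, which is the theorem. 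This subtraction encodes exactly your intended per-layer count (in-layer shared facets plus the bottom facet, the latter being the lower cover sitting in layer $k-1$), but it never requires shelling a non-ideal family of alcoves or reasoning directly about half-open boundaries.
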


The statistic $\defnn{\cdes}$ above is the \defn{circular descent statistic} of Lam and Postnikov \cite{LP18alcoved2}, which we review in \cref{ss:alcoved-hypersimplices}. By plugging $t = 1$ in \cref{t:cdes-basc}, we essentially recover their volume formula for generalized hypersimplices in \cite[Theorem 9.3]{LP18alcoved2}; see \Cref{eq:LP-volume}.

The half-open hypersimplices partition the \defn{fundamental parallelepiped}
\begin{equation}\label{eq:def-PiC}
\defnn{\Pi_{C_n}} : = \{ x \in \RR^n \mid
0 \leq 2 x_1 \,,\, x_2 - x_1 \,,\, \dots \,,\, x_n - x_{n-1} \leq 1 \}.
\end{equation}
Given that the fundamental parallelepiped and all the half-open hypersimplices have the same dimension, \cref{t:cdes-basc} implies the following formula for the $h^*$-polynomial of $\Pi_{C_n}$:
\begin{equation}\label{eq:h-parallel1}
h_{\Pi_{C_n}}^*(t) = \sum_k h_{\Delta'_{C_n,k}}^*(t) = \sum_{ w \in X_n } t^{\basc(w)}.
\end{equation}
The parallelepiped $(\Pi_{C_n},\frac{1}{2}\ZZ^n)$ is \emph{integrally equivalent} to the box $([0,1]\times[-1,1]^{n-1} , \ZZ^n)$.
By studying the triangulation obtained by slicing this box with the linear Coxeter arrangement of type $BC_n$, one finds an alternative formula for the $h^*$-polynomial of $\Pi_{C_n}$:
\begin{equation}\label{eq:h-parallel2}
h_{\Pi_{C_n}}^*(t) = \sum_{ w \in X_n } t^{\des_B(w)},
\end{equation}
where $\defnn{\des_B}$ denotes the usual descent statistic of $\hypoct_n$ as a Coxeter group.

A natural question arises by comparing \Cref{eq:h-parallel1,eq:h-parallel2}:
\begin{center}
\begin{minipage}{.7\linewidth}
\centering
\emph{Is there a formula for the $h^*$-polynomial of $\Delta'_{C_n,k}$
that uses the descent statistic of the hyperoctahedral group?}
\end{minipage}
\end{center}
The following result gives a positive answer to this question.

\begin{mainthm}
\label{t:fexc-desB}
For all $n \geq 1$ and $k \geq 1$, the $h^*$-polynomial of the half-open hypersimplex $\Delta'_{C_n,k}$ is
\[
h_{\Delta'_{C_n,k}}^*(t) = \sum_{ \substack{ w \in X_n : \\ \fexc(w) = k-1 } } t^{\des_B(w)},
\]
where $\defnn{\fexc}$ denotes the \defn{flag-excedance} statistic
of Foata and Han \cite{fh09signedV}.
\end{mainthm}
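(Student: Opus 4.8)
The plan is to prove \cref{t:fexc-desB} by refining the geometric argument that produces the second parallelepiped formula in \Cref{eq:h-parallel2}, keeping track of which half-open hypersimplex each simplex of the triangulation lands in. First I would pass to the box $([0,1]\times[-1,1]^{n-1},\ZZ^n)$ via its integral equivalence with $(\Pi_{C_n},\tfrac12\ZZ^n)$, and record how the linear functional $2x_n$—whose level sets $k-1\le 2x_n\le k$ carve $\Pi_{C_n}$ into the half-open pieces $\Delta'_{C_n,k}$—transforms into an affine functional $\ell$ on the box. Slicing the box with the linear Coxeter arrangement of type $BC_n$ (the hyperplanes $y_i=0$ and $y_i=\pm y_j$) produces a unimodular triangulation whose open simplices are indexed by $X_n$: the constraint $y_1\ge 0$ forces $w^{-1}(1)>0$, and the half-open structure assigns to the simplex $\sigma_w$ the weight $t^{\des_B(w)}$, exactly as in the derivation of \Cref{eq:h-parallel2}. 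The entire content of \cref{t:fexc-desB} is then the assertion that $\sigma_w$ lies in the layer $k$ with $k-1=\fexc(w)$.

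To make this precise I would first check that the subdivision of the box into layers $\{j-1\le \ell\le j\}$ is refined by the Coxeter triangulation, so that $\ell$ has a well-defined integer range on each open simplex $\sigma_w$ and the layer index $k$ is a genuine statistic of $w$. This amounts to verifying that each hyperplane $\{\ell=j\}$ is a union of faces of the triangulation, which should follow from unimodularity together with the compatibility of $\ell$ with the sign-and-order data cut out by the $BC_n$ hyperplanes. Granting this, the layer containing $\sigma_w$ is computed by evaluating $\ell$ on the lattice points of $\sigma_w$ and reading off the common value of $\lceil \ell\rceil$.

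The crux is the identification of this layer index with the flag-excedance statistic. Writing $2x_n$ in the box coordinates as $u_1+2(u_2+\cdots+u_n)$, I expect the value of $\ell$ on $\sigma_w$ to split into two contributions: each negative coordinate of $w$ contributes a single unit, accounting for $\fneg(w)$, while the relative order of the coordinates $|y_i|$ recorded by $w$ contributes two units per excedance, accounting for $2\exc(w)$. Their sum is precisely the flag-excedance $\fexc(w)=2\exc(w)+\fneg(w)$, and the coefficient $2$ is exactly the coefficient of $u_2,\dots,u_n$ in $2x_n$. Establishing this decomposition—carefully matching the vertex at which $\ell$ attains its layer value on $\sigma_w$ to the excedance-and-sign data of $w$—is the main obstacle, and is where the specific shape of the $C_n$ root system (the long root $2e_1$ producing the asymmetric box $[0,1]\times[-1,1]^{n-1}$) enters.

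Summing $t^{\des_B(w)}$ over those $w\in X_n$ with $\fexc(w)=k-1$ then recovers $h^*_{\Delta'_{C_n,k}}(t)$, proving the theorem; plugging $t=1$ and summing over $k$ is consistent with \Cref{eq:h-parallel2}. As an alternative that bypasses the layer computation, one could instead construct a bijection $\phi\colon X_n\to X_n$ with $\basc(w)=\des_B(\phi(w))$ and $\cdes(w^{-1})=\fexc(\phi(w))+1$, deducing \cref{t:fexc-desB} directly from \cref{t:cdes-basc}; but producing such a statistic-preserving bijection explicitly looks at least as delicate as the geometric computation above.
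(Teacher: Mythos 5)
Your approach founders on precisely the step you flag as ``to be checked'': the claim that the triangulation of the box by the linear $BC_n$ Coxeter arrangement refines the layer decomposition $\{k-1 \leq 2x_n \leq k\}$ is false. Take $n=2$ with coordinates $y_1 = 2x_1 \in [0,1]$, $y_2 = 2(x_2-x_1)-1 \in [-1,1]$, so that $2x_2 = y_1 + y_2 + 1$ and the wall between $\Delta_{C_2,2}$ and $\Delta_{C_2,3}$ becomes the affine hyperplane $y_1 + y_2 = 1$. The simplex of the $BC_2$-slicing triangulation cut out by $0 \leq y_1 \leq y_2 \leq 1$, with vertices $(0,0)$, $(0,1)$, $(1,1)$, contains the interior points $(0.1,\,0.5)$ and $(0.1,\,0.95)$, which lie on opposite sides of that hyperplane; so this simplex straddles two layers and ``the layer containing $\sigma_w$'' is not well defined. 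The obstruction is structural, not an artifact of coordinates: the layer walls $\langle x,\theta\rangle = k$ are \emph{affine} hyperplanes of the affine type $C$ arrangement, whereas the facets of your triangulation lie only on the transferred \emph{linear} $BC_n$ hyperplanes (which become $2x_1=0$, $x_i - x_{i-1} = \tfrac12$, $x_i+x_j-x_{i-1}-x_{j-1}=1$, etc., never $2x_n = k$). The one triangulation in this story that is compatible with the layers is the affine alcove triangulation, and its shelling statistic is $\basc$ with layer statistic $\cdes(w^{-1})$ --- that is exactly \cref{t:cdes-basc}, not \cref{t:fexc-desB}. Indeed, this incompatibility between the $\des_B$-triangulation and the hypersimplex slicing is the very reason the statement of \cref{t:fexc-desB} is posed as a question in the introduction rather than read off from geometry.

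The paper's actual proof is entirely non-geometric: it compares two generating functions. \Cref{p:aux1} starts from Foata and Han's closed form for the joint distribution of $(\fexc,\fdes)$ on $\hypoct_n$, converts $\fdes$ to $\des_B$ via $\des_B = \lfloor(\fdes+1)/2\rfloor$, and then restricts the sum to $X_n$ by a decomposition of $\hypoct_n$ according to the value of $w^{-1}(1)$; \Cref{p:aux2} computes $\sum_{n,k} L_{\Delta'_{C_n,k+1}}(r)\, s^k u^n$ in closed form by direct lattice-point counting in box coordinates. The two expressions coincide, and extracting the coefficient of $s^{k-1}u^n$ yields the theorem. Your fallback suggestion --- a bijection $\phi$ of $X_n$ carrying $(\cdes(w^{-1}),\basc)$ to $(\fexc+1,\des_B)$ --- would indeed suffice, but you do not construct it (nor does the paper), and the counterexample above shows no such correspondence can arise from matching simplices of your triangulation to alcoves layer by layer; so nothing short of a new construction or the generating-function route closes the gap.
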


We prove this theorem in \cref{s:proofB}, by manipulation of generating functions. The definition of $\fexc$ and of the other statistics on signed permutations that appear in the formulas above are reviewed in \cref{ss:hypoct}.
Evaluating at $t = 1$, we find a new formula for the volume of the type C hypersimplices that is fundamentally different from that of Lam and Postnikov.

\begin{maincor}
\label{cor:volume-fexc}
For all $n \geq 1$ and $k \geq 1$,
\[
\Vol(\Delta_{C_n,k}) = |\{ w \in X_n \mid \fexc(w) = k-1 \}|.
\]
\end{maincor}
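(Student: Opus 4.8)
The plan is to deduce this volume formula by specializing Theorem~\ref{t:fexc-desB} at $t = 1$, using the general principle---recalled in the introduction---that the sum of the coefficients of an $h^*$-polynomial equals the normalized volume. Concretely, I would evaluate the identity of \cref{t:fexc-desB} at $t = 1$. Every monomial $t^{\des_B(w)}$ becomes $1$, so the right-hand side collapses to the cardinality $|\{ w \in X_n \mid \fexc(w) = k-1 \}|$, while the left-hand side becomes $h^*_{\Delta'_{C_n,k}}(1)$.

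Next I would invoke the fact that for a full-dimensional lattice polytope $P$ one has $h^*_P(1) = \Vol(P)$, the normalized volume. Applied to the half-open hypersimplex this gives $h^*_{\Delta'_{C_n,k}}(1) = \Vol(\Delta'_{C_n,k})$. One should note that this identity holds verbatim for half-open polytopes, since $h^*_P(1)$ is governed only by the leading coefficient of the Ehrhart (quasi-)polynomial, which depends solely on the Euclidean volume and is therefore unaffected by whether boundary facets are included.

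The final step is to identify $\Vol(\Delta'_{C_n,k})$ with $\Vol(\Delta_{C_n,k})$. For $k = 1$ the two polytopes are equal by definition, so there is nothing to check. For $k \geq 2$, the half-open hypersimplex is obtained from the closed one by deleting the single facet lying in the hyperplane $2x_n = k-1$ (see \Cref{eq:def-hypersimplex} and the definition of $\Delta'_{C_n,k}$); since this facet has dimension $n-1 < n$, it contributes zero to the $n$-dimensional volume, so $\Vol(\Delta'_{C_n,k}) = \Vol(\Delta_{C_n,k})$. Chaining the three equalities yields the claimed formula. This corollary is essentially immediate once \cref{t:fexc-desB} is established; the only point requiring care---and the mildest possible obstacle---is confirming that passing to the half-open polytope preserves the normalized volume, which follows from the measure-zero argument above.
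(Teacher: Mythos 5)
Your proposal is correct and follows exactly the paper's route: the paper obtains \cref{cor:volume-fexc} by plugging $t=1$ into \cref{t:fexc-desB} and using the fact that $h^*_P(1)$ equals the normalized volume. Your additional care in justifying that $\Vol(\Delta'_{C_n,k}) = \Vol(\Delta_{C_n,k})$ (the removed facet has measure zero, so it affects neither the Euclidean volume nor the leading Ehrhart coefficient) is a detail the paper leaves implicit, and it is handled correctly.
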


The statistic $\exc_B$ defined by $\exc_B(w) = \lfloor \tfrac{\fexc(w) + 1}{2} \rfloor$ (see \cite{B20polalg,BHS23PrimEul}) is Eulerian, meaning its distribution on $\hypoct_n$ is the same as that of $\des_B$. Using \cref{cor:volume-fexc}, we can recover the \defn{type B Eulerian numbers $B_{n,k}$} in terms of volumes of type C hypersimplices; c.f. \Cref{eq:A-vol-Eul}.

\begin{maincor}
\label{cor:volC-EulB}
For all $n \geq 1$ and $k \geq 1$, we have:
\begin{align*}
B_{n,k} &= \Vol(\Delta_{C_n,2k-1}) + 2 \Vol(\Delta_{C_n,2k}) + \Vol(\Delta_{C_n,2k+1})
\end{align*}
where $\Vol(\Delta_{C_n,j}) = 0$ whenever $j < 1$ or $j > 2n-1$.
\end{maincor}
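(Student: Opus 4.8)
The plan is to reduce everything to the flag-excedance statistic and then exploit a sign-changing bijection on $\hypoct_n$. Write $f_m := |\{ w \in X_n : \fexc(w) = m \}|$, so that \cref{cor:volume-fexc} reads $\Vol(\Delta_{C_n,j}) = f_{j-1}$, where the convention $\Vol(\Delta_{C_n,j}) = 0$ for $j \notin \{1,\dots,2n-1\}$ corresponds to $f_m = 0$ for $m \notin \{0,\dots,2n-2\}$. Under this substitution the right-hand side of the claimed identity becomes $f_{2k-2} + 2 f_{2k-1} + f_{2k}$, so it suffices to prove $B_{n,k} = f_{2k-2} + 2 f_{2k-1} + f_{2k}$.

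First I would rewrite the left-hand side in terms of $\fexc$ on the whole group. Since $\exc_B$ is Eulerian, it is equidistributed with $\des_B$, so $B_{n,k} = |\{ w \in \hypoct_n : \exc_B(w) = k \}|$. Unfolding the definition $\exc_B(w) = \lfloor \tfrac{\fexc(w)+1}{2} \rfloor$, the equation $\exc_B(w) = k$ holds precisely when $\fexc(w) \in \{ 2k-1, 2k \}$. Hence, setting $g_m := |\{ w \in \hypoct_n : \fexc(w) = m \}|$, we obtain $B_{n,k} = g_{2k-1} + g_{2k}$.

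The core of the argument is to relate the counts $g_m$ over all of $\hypoct_n$ to the counts $f_m$ over $X_n$. I would partition $\hypoct_n = X_n \sqcup X_n'$ with $X_n' := \{ w \in \hypoct_n : w^{-1}(1) < 0 \}$, and consider the map $\phi \colon X_n \to X_n'$ that replaces the unique window entry equal to $1$ by $-1$ (equivalently, left multiplication by the generator interchanging the values $1$ and $-1$). Since this sends $w^{-1}(1)$ to $-w^{-1}(1)$, it is a bijection of $X_n$ onto $X_n'$, and I would verify that it raises the flag-excedance by exactly one: for $w \in X_n$ the value $1$ occupies a positive position $p = w^{-1}(1) \ge 1$, so replacing it by $-1$ creates one additional negative window entry while changing no excedance (neither $1 > p$ nor $-1 > p$ can hold for $p \ge 1$). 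With the normalization of $\fexc$ from \cref{ss:hypoct} this yields $\fexc(\phi(w)) = \fexc(w) + 1$, hence $|\{ w \in X_n' : \fexc(w) = m \}| = f_{m-1}$ and therefore $g_m = f_m + f_{m-1}$.

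Combining the two reductions gives
\[
B_{n,k} = g_{2k-1} + g_{2k} = (f_{2k-1} + f_{2k-2}) + (f_{2k} + f_{2k-1}) = f_{2k-2} + 2 f_{2k-1} + f_{2k},
\]
which is the desired identity after re-substituting $f_{j-1} = \Vol(\Delta_{C_n,j})$. I expect the only delicate point to be the middle paragraph: confirming that $\phi$ preserves excedances while adding exactly one negative, and hence increments $\fexc$ by one. This hinges on the precise definition of $\fexc$, so I would carry out that verification directly from \cref{ss:hypoct} and sanity-check it against the boundary values of $f_m$ to make sure the out-of-range convention $\Vol(\Delta_{C_n,j}) = 0$ is consistent.
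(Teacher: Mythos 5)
Your proposal is correct and takes essentially the same route as the paper's proof: both rewrite $B_{n,k}$ as $|\{w \in \hypoct_n \mid \exc_B(w) = k\}|$ using that $\exc_B$ is Eulerian, split this according to $\fexc(w) \in \{2k-1,2k\}$ via the floor relation, and then relate counts over $\hypoct_n$ to counts over $X_n$ through the bijection that negates the window entry equal to $1$ (raising $\fexc$ by exactly one), before invoking \cref{cor:volume-fexc}. Your verification that this bijection preserves excedances and adds one negation is precisely the observation the paper uses.
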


In type A, the fundamental parallelepiped $\Pi_{A_{n+1}}$ is equivalent to the unit cube $[0,1]^n$, whose Ehrhart polynomial is $(r+1)^n$. Thus, Worpitzky's identity tells us that the $h^*$-polynomial of $\Pi_{A_{n+1}}$ is the Eulerian polynomial $E_{A_n}(t) = \sum_{k} A_{n,k} t^k$.
In \cref{s:Psi}, we obtain the corresponding result in type C by explicitly computing the Ehrhart series of $\Pi_{C_{n+1}}$.

\begin{mainthm}
\label{t:generating}
The following identity holds
\[
\sum_{n \geq 0} h^*_{\Pi_{C_{n+1}}}(t) \frac{x^n}{n!} = e^{3x(t-1)} {\rm Eul}_A(t,2x)^2,
\]
where ${\rm Eul}_A(t,x)$ is the exponential generating function of the Eulerian polynomials.
\end{mainthm}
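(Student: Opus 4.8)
The plan is to compute the Ehrhart series of $\Pi_{C_{n+1}}$ explicitly, assemble the exponential generating function, and recognize the result as the stated product. The key input is the integral equivalence $(\Pi_{C_{n+1}}, \frac{1}{2}\ZZ^{n+1}) \cong ([0,1]\times[-1,1]^{n}, \ZZ^{n+1})$ recorded just before \eqref{eq:h-parallel2}. Since the $r$-th dilate of the box is $[0,r]\times[-r,r]^{n}$, which meets $\ZZ^{n+1}$ in exactly $(r+1)(2r+1)^{n}$ points, the Ehrhart polynomial is $L_{\Pi_{C_{n+1}}}(r) = (r+1)(2r+1)^{n}$. As $\dim \Pi_{C_{n+1}} = n+1$, the defining relation \eqref{eq:def-h*} yields the closed form
\[
h^*_{\Pi_{C_{n+1}}}(t) = (1-t)^{n+2}\sum_{r\ge0}(r+1)(2r+1)^{n}\,t^{r},
\]
understood as an identity of rational functions in $t$ whose left-hand side is a genuine polynomial.

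Next I would substitute this into the generating function, interchange the two summations, and evaluate the inner exponential series. Setting $v := e^{(1-t)x}$, so that $e^{(1-t)(2r+1)x} = v\,(v^{2})^{r}$, I obtain
\[
\sum_{n\ge0} h^*_{\Pi_{C_{n+1}}}(t)\,\frac{x^{n}}{n!}
= (1-t)^{2}\sum_{r\ge0}(r+1)\,t^{r}\,e^{(1-t)(2r+1)x}
= (1-t)^{2}\,v\sum_{r\ge0}(r+1)(t v^{2})^{r}
= \frac{(1-t)^{2}\,v}{(1-t v^{2})^{2}},
\]
where the last step uses $\sum_{r\ge0}(r+1)z^{r} = (1-z)^{-2}$ with $z = t v^{2}$. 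The interchange is legitimate: summing the $x$-series first turns the double sum into a single geometric series in $t v^{2}$, so the identity holds in $\RR(t)[[x]]$ (equivalently, for $t$ in a neighborhood where $|t v^{2}| < 1$).

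Finally I would match this against the right-hand side. Rewriting the standard Eulerian generating function as ${\rm Eul}_A(t,x) = \dfrac{(1-t)e^{(1-t)x}}{1 - t e^{(1-t)x}}$ gives ${\rm Eul}_A(t,2x) = \dfrac{(1-t)v^{2}}{1 - t v^{2}}$, while $e^{3x(t-1)} = v^{-3}$. Therefore
\[
e^{3x(t-1)}\,{\rm Eul}_A(t,2x)^{2} = v^{-3}\cdot\frac{(1-t)^{2}\,v^{4}}{(1-t v^{2})^{2}} = \frac{(1-t)^{2}\,v}{(1-t v^{2})^{2}},
\]
which is precisely the generating function computed above, proving the identity.

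The computation is essentially routine once the Ehrhart polynomial is in hand, so I do not expect a serious obstacle; the only delicate point is the bookkeeping in the last step, namely rewriting ${\rm Eul}_A$ in the variable $v = e^{(1-t)x}$ and tracking the prefactor $e^{3x(t-1)}$ so that the powers of $v$ collapse to a single $v$. As a consistency check, letting $t \to 1$ (so $v \to 1$) both sides tend to $(1-2x)^{-2} = \sum_{n\ge0}(n+1)!\,2^{n}\,\frac{x^{n}}{n!}$, recovering $h^*_{\Pi_{C_{n+1}}}(1) = \Vol(\Pi_{C_{n+1}}) = |X_{n+1}| = 2^{n}(n+1)!$.
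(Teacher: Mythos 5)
Your proof is correct and follows essentially the same route as the paper: both compute $L_{\Pi_{C_{n+1}}}(r) = (r+1)(2r+1)^{n}$ via integral equivalence to a box, sum the resulting Ehrhart series against $x^n/n!$, and match the closed form $\frac{(1-t)^2 e^{(1-t)x}}{(1-te^{2(1-t)x})^2}$ with $e^{3x(t-1)}{\rm Eul}_A(t,2x)^2$. The only cosmetic difference is that you carry the factors $(1-t)^{n}$ through the exponential sum in one pass, whereas the paper first sums with plain $x$ and then performs the substitution $x \mapsto x(1-t)$ at the end.
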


As a byproduct of the proof of the formula above, we also deduce the real-rootedness of the $h^*$-polynomial of $\Pi_{C_n}$; see \cref{t:real-roots}.

In light of \Cref{eq:h-parallel1}, \cref{t:generating} can be interpreted as providing the generating function for the big ascent statistic on the sequence of posets $\{(X_n,\leq)\}_{n \geq 1}$. In \cref{s:limit}, we show that in a precise sense, the \emph{limit} of $X_n$ is the lattice $\defnn{\mathcal{SP}}$ of \defn{strict integer partitions}. This provides a type C analog of a result in \cite{ACR21}, where the authors show that the posets of cyclic permutations \emph{converge} to Young's lattice of integer partitions.

\subsubsection*{Related work}

Li's work on the $h^*$-polynomial of the (type A) half-open hypersimplices \cite{Li12} sparked significant interest in these polytopes. In \cite{HLO15faceopen}, Hibi, Li, and Ohsugi computed the face vectors of these polytopes. Han and Josuat-Vergès \cite{HJ16-flag} computed $h^*$-polynomials for slices of dilated hypercubes using flag statistics on colored permutations; their approach does not rely on unimodular triangulations or introduce new partial orders on the permutations. Sanyal and Stump \cite[\S 5]{SS18Lipschitz} extended Li's results to \emph{$P$-hypersimplices}. More recently, Jiang \cite{J24Ehpositroid}, Bullock and Jiang \cite{BJ24Ehalcoved}, have studied the Ehrhart theory of positroids and general alcoved polytopes, specifically in the case where $\Lambda$ is the integer lattice. Since the vertices of alcoved polytopes $P$ are only guaranteed to be rational, the functions $L^\Lambda_P$ are, in general, only quasi-polynomials.

\subsubsection*{Acknowledgments}

We would like to express our gratitude to Alejandro Morales for shedding light on the connections between the poset of circular permutations defined in \cite{ACR21} and the work of Lam and Postnikov in \cite{LP07alcoved1}. His insight, drawn from his ongoing collaboration with Benedetti, González D'Léon, Hanusa, and Yip, were instrumental in starting this project. Additionally, we thank Christophe Reutenauer and Nathan Chapelier-Laget for their valuable comments and stimulating discussions. We extend our gratitude to the members of LACIM for generously dedicating their time to review an earlier draft of this manuscript and for providing valuable feedback.

\section{Background}

We introduce the following notation that will be used throughout the document.
For a positive integer $n$, we write $\defnn{[n]} := \{1,\dots,n\}$ and $\defnn{\dbra{n}} := \{-n , \dots , -1 , 1 , \dots , n\}$. For brevity, we also write $\defnn{\ol{i}} := - i$ for all integers $i$. Similarly, for $i \in [n]$, we write $\defnn{e_{\ol{i}}} := - e_i$, where $\{e_i\}_{i \in [n]}$ denotes the canonical basis of $\RR^n$.

\subsection{Signed permutations}
\label{ss:hypoct}

A \defn{signed permutation} of length $n$ is a permutation $w$ of the set $\dbra{n}$ that satisfies $w(\ol{i}) = \ol{w(i)}$ for all $i \in \dbra{n}$.
Under composition, they form the \defn{hyperoctahedral group}, denoted $\defnn{\hypoct_n}$.
This is the finite Coxeter group of type $B_n$ or $C_n$.

The \defn{window notation} of a signed permutation $w \in \hypoct_n$ is the word $w_1 w_2 \dots w_n \in \dbra{n}^n$, where $w_i = w(i)$. We usually identify $w$ with its window notation and write $w = w_1 w_2 \dots w_n$.
On the other hand, the \defn{complete notation} of $w \in \hypoct_n$ is the word
\[
\defnn{\wt{w}}:=w_{\ol{n}}\cdots w_{\ol{1}} w_1\cdots w_n,
\]
where $w_{\ol{i}} = \ol{w_i} = w(\ol{i})$ for all $i \in \dbra{n}$.

\subsubsection{Signed permutation statistics and type B Eulerian numbers}

We review some well-studied statistics on signed permutations.

For a signed permutation $w \in \hypoct_n$, a \defn{descent} is a position $i \in [0,n-1]$ such that $w_i > w_{i+1}$, where by convention $w_0 := 0$. We let $\defnn{\des_B(w)}$ denote the number of descents of $w$. The statistic $\des_B$ is precisely the descent statistic of $\hypoct_n$ as a Coxeter group; see, for instance, \cite[Chapter 13]{petersen15}. The \defn{type B Eulerian numbers} enumerate signed permutations by the descent statistic; namely, for $0 \leq k \leq n$,
\[
\defnn{B_{n,k}} := |\{ w \in \hypoct_n \mid \des_B(w) = k \}|.
\]

The \defn{flag-descent} statistic, introduced by Adin, Brenti, and Roichman in \cite{abr01desmaj}, is a variation of the descent statistic, defined as:
\[
\defnn{\fdes(w)} := 2 \des_B(w) - \delta_{w_1 < 0},
\]
where $\delta_{w_1 < 0} \in \{0,1\}$ equals $1$ if and only if $w_1 < 0$. Crucially, and directly from this definition, we have that
\begin{equation}
\label{eq:desBandfdes}
\des_B(w) = \left\lfloor \tfrac{\fdes(w) + 1}{2} \right\rfloor,
\end{equation}
for all $w \in \hypoct_n$.

Additionally, we say $i \in [n]$ is an \defn{excedance} (respectively, \defn{negation}) of $w \in \hypoct_n$ if $w_i > i$ (respectively, $w_i < 0$). We denote the number of excedances and negations of $w$ by $\defnn{\exc(w)}$ and $\defnn{\fneg(w)}$, respectively. The \defn{flag-excedance} statistic, introduced by Foata and Han in \cite{fh09signedV} is defined by
\[
\defnn{\fexc(w)} := 2 \exc(w) + \fneg(w).
\]
In the same article, they show that the statistics $\fdes$ and $\fexc$ have the same distribution.
Consequently, the statistic
\begin{equation}
\label{eq:excB}
\defnn{\exc_B(w)} := \left\lfloor \tfrac{\fexc(w) + 1}{2} \right\rfloor,
\end{equation}
introduced by the second author in \cite{B20polalg}, is Eulerian (has the same distribution as $\des_B$.)

\subsection{Alcoved polytopes}
\label{s:alcoved}

This section provides an overview of key aspects of the work by Lam and Postnikov in \cite{LP07alcoved1,LP18alcoved2} that are central to the discussion in this paper. We assume some familiarity with the theory of Coxeter groups and root systems. For terminology, we refer the reader to the books by Bourbaki \cite{Bour1968} and by Humphreys \cite{Humphreys}.

\subsubsection{Root systems}

Consider an $n$-dimensional real Euclidean space $V$ with a nondegenerate inner product $\langle \cdot , \cdot \rangle$. Let $\Phi \subset V$ be an irreducible crystallographic root system with a fixed set of \defn{simple roots} $\{\alpha_1,\dots,\alpha_n\} \subset \Phi$. Denote by $\defnn{\Phi^+} := \Phi \cap \ZZ_{\geq 0}\{\alpha_1,\dots,\alpha_n\}$ its set of \defn{positive roots}.
The \defn{root order} is the partial order on $\Phi^+$ determined by $\alpha \leq \beta$ if and only if $\beta - \alpha \in \ZZ_{\geq 0}\{\alpha_1,\dots,\alpha_n\}$. It has a unique maximum element $\defnn{\theta}$, called the \defn{highest root} of $\Phi$. Let $a_1,\dots,a_n \in \ZZ_{\geq 0}$ be the constants such that
\begin{equation}
\label{eq:highest}
\theta = a_1 \alpha_1 + \dots + a_n \alpha_n.
\end{equation}
The \defn{Coxeter number} of $\Phi$ is $h(\Phi) := a_1 + \dots + a_n + 1$. Finally, the \defn{fundamental coweights} $\{\omega_1,\dots,\omega_n\}$ form the basis of $V$ dual to the simple roots, so $a_i = \langle \theta , \omega_i \rangle$.

\begin{example}[Type A]
Let $V = \RR^{n+1}/\RR {\bf 1}$, where ${\bf 1}$ denotes the all-ones vector.
The root system $\Phi \subset V$ of \defn{type $A_n$} has simple roots
\[
\alpha_1 = e_2 - e_1 , \quad
\alpha_2 = e_3 - e_2 , \quad \dots \quad
\alpha_{n-1} = e_n - e_{n-1}, \quad
\alpha_n = e_{n+1} - e_n.
\]
Its set of positive roots and its highest root are respectively
\[
\Phi^+ = \{ e_j - e_i \mid 1 \leq i < j \leq n+1 \}
\quad\text{and}\quad
\theta = \alpha_1 + \alpha_2 + \dots + \alpha_n = e_{n+1} - e_1.
\]
Lastly, its fundamental coweights are (the projections to $V$ of)
\[
\omega_1 = e_2 + e_3 + \dots + e_{n+1}, \quad
\omega_2 = e_3 + \dots + e_{n+1}, \quad
\dots \quad
\omega_n = e_{n+1}.
\]
\end{example}

\begin{example}[Type C]
\label{ex:roots-C}
Let $V = \RR^n$.
The root system $\Phi \subset V$ of \defn{type $C_n$} has simple roots
\[
\alpha_1 = 2e_1, \quad
\alpha_2 = e_2 - e_1 , \quad \dots \quad
\alpha_{n-1} = e_{n-1} - e_{n-2} , \quad
\alpha_n = e_n - e_{n-1}.
\]
Its set of positive roots and its highest root are respectively
\[
\Phi^+ = \{ e_j \pm e_i \mid 1 \leq i < j \leq n \} \cup \{ 2e_i \mid i \in [n] \}
\quad\text{and}\quad
\theta = \alpha_1 + 2\alpha_2 + \dots + 2\alpha_{n-1} + 2\alpha_n = 2 e_n.
\]
\cref{f:H-Cn} shows the positive roots for $n=2$.
Lastly, its fundamental coweights are
\[
\omega_1 = \tfrac{1}{2} \left(e_1 + e_2 + \dots + e_n \right), \quad
\omega_2 = e_2 + e_3 + \dots + e_n, \quad
\omega_3 = e_3 + \dots + e_n, \quad
\dots \quad
\omega_n = e_n.
\]
\end{example}

\subsubsection{Affine Coxeter arrangement and alcoves}

The \defn{affine Coxeter arrangement} associated with $\Phi$ is the collection of hyperplanes
\[
\defnn{\HH_{\Phi}} := \{ H_{\alpha,k} \mid \alpha \in \Phi^+ , k \in \ZZ \},
\qquad\text{where}\qquad
\defnn{H_{\alpha,k}} := \{x \in V \mid \angbra{x,\alpha} = k\}.
\]
The \defn{affine Coxeter group} $\defnn{\wt{W}_\Phi}$ is the group of affine transformations of $V$ generated by reflections across the hyperplanes in $\HH_{\Phi}$.
The \defn{finite Coxeter group} $\defnn{W_\Phi}$ is the subgroup of $\defnn{\wt{W}_\Phi}$ generated by reflections across the linear hyperplanes $H_{\alpha,0}$.
See \cref{f:H-Cn} for a picture of $\HH_{\Phi}$ when $\Phi$ is of type $C_2$.

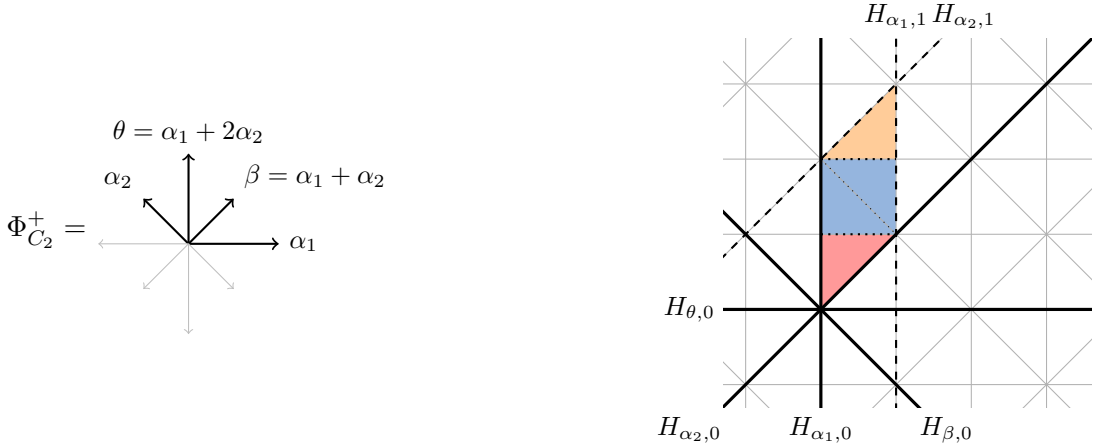
\begin{figure}[h]
\[
\Phi^+_{C_2} =
\begin{gathered}
\begin{tikzpicture}[scale = .6]
\def\B{1}
\draw[thick,->] (0,0) -- ( 2,0) node[right] {\small $\alpha_1$};
\draw[thick,->] (0,0) -- ( 1,1) node[above right] {\small $\beta = \alpha_1 + \alpha_2$};
\draw[thick,->] (0,0) -- ( 0,2) node[above] {\small $\theta = \alpha_1 + 2\alpha_2$};
\draw[thick,->] (0,0) -- (-1,1) node[above left] {\small $\alpha_2$};
\draw[->, lightgray] (0,0) -- (-2,0) {};
\draw[->, lightgray] (0,0) -- (-1,-1) {};
\draw[->, lightgray] (0,0) -- (0,-2) {};
\draw[->, lightgray] (0,0) -- (1,-1) {};
\end{tikzpicture}
\end{gathered}
\hspace*{.2\linewidth}
\begin{gathered}
\begin{tikzpicture}[scale = 2]
\def\B{1}
\def\p{.15}
\node at (-\B-2.5*\p,-\B/2) {\small $H_{\theta,0}$};
\node at ( -\B/2,-\B-2*\p) {\small $H_{\alpha_1,0}$};
\node at (-\B-2.5*\p,-\B-2*\p) {\small $H_{\alpha_2,0}$};
\node at ( \B/3,-\B-2*\p) {\small $H_{\beta,0}$};
\node at ( 0,\B+3*\p) {\small $H_{\alpha_1,1}$};
\node at (3*\p,\B+3*\p) {\small $H_{\alpha_2,1}$};
\clip (-\B-\p,-\B-\p) rectangle (\B+2*\p,\B+2*\p);
\path[fill=white] (-\B-\p,-\B-\p) rectangle (\B+2*\p,\B+2*\p);
\path[fill=red!40] (-.5,-.5) --++ (0,.5) --++ (.5,0) -- cycle;
\path[fill=RoyalBlue!40] (-.5,0) --++ (0,.5) --++ (.5,0) --++ (0,-.5) -- cycle;
\path[fill=orange!40] (-.5,.5) --++ (.5,0) --++ (0,.5) -- cycle;
\foreach \y in {-2\B,...,2\B}{
\draw[black!30,shift = {(0,\y/2)}] (-2\B,0) --++ (4\B,0);
\draw[black!30,shift = {(\y/2,0)}] (0,-2\B) --++ (0,4\B);
\draw[black!30,shift = {(\y,0)}] (-2\B,-2\B) --++ (4\B,4\B);
\draw[black!30,shift = {(0,\y)}] (-2\B,2\B) --++ (4\B,-4\B);
}
\draw[very thick] (-.5,-2\B) --++ (0,4\B);
\draw[very thick] (-2\B,-.5) --++ (4\B,0);
\draw[very thick] (-2\B,-2\B) --++ (4\B,4\B);
\draw[very thick] (-2\B,2\B-1) --++ (4\B,-4\B);
\draw[dashed,thick] (0,-2\B) --++ (0,4\B);
\draw[dashed,thick] (-2\B,-2\B+1) --++ (4\B,4\B);
\draw[dotted,thick] (-.5,0) --++ (.5,0);
\draw[dotted,thick] (-.5,.5) --++ (.5,0);
\draw[dotted] (-.5,.5) --++ (.5,-.5);
\end{tikzpicture}
\end{gathered}
\]
\caption{
Left: Positive roots of type $C_2$.
Right: The affine arrangement $\HH_{\Phi_{C_2}}$
with the fundamental parallelepiped $\Pi_{C_2}$ subdivided into the three hypersimplices $\Delta_{C_2,1}$ (red, bottom), $\Delta_{C_2,2}$ (blue, center), and $\Delta_{C_2,3}$ (orange, top).
Only thick or dashed hyperplanes are labeled.
}
\label{f:H-Cn}
\end{figure}

The complement in $V$ of the hyperplanes in $\HH_{\Phi}$ is the disjoint union of bounded convex open sets, with their closures being the \defn{alcoves} of the arrangement. The \defn{fundamental alcove} is
\begin{equation}
\label{eq:fund-alcove}
\begin{split}
\defnn{A_\circ} := & \{ x \in V \mid 0 \leq \langle x , \alpha \rangle \leq 1 \text{ for all } \alpha \in \Phi^+ \} \\
= & {\rm Conv}\{ 0 , \tfrac{\omega_1}{a_1} , \dots , \tfrac{\omega_n}{a_n}\}.
\end{split}
\end{equation}
Let $\leq$ denote the \defn{weak order} on the alcoves of $\HH_\Phi$ with base region $A_\circ$ \cite{mandel82thesis,edelman84regions}.
That is, $\defnn{A \leq A'}$ if and only if every hyperplane $H \in \HH_\Phi$ separating $A_\circ $ and $A$ also separates $A_\circ $ and $A'$. The group ${\wt{W}_\Phi}$ acts faithfully and transitively on the collection of alcoves, allowing us to identify elements $w \in \wt{W}_\Phi$ with alcoves $w \cdot A_\circ$. Under this identification, the weak order on alcoves and the right weak order on $\wt{W}_\Phi$ as a Coxeter group agree.

The \defn{fundamental parallelepiped} is the polytope
\begin{equation}
\label{eq:def-fund_parall}
\begin{split}
\defnn{\Pi_\Phi} := & \{ x \in V \mid 0 \leq \langle x , \alpha_i \rangle \leq 1 \text{ for all } i = 1,\dots,n \} \\
= & [0,1]\omega_1 + [0,1]\omega_2 + \dots + [0,1]\omega_n,
\end{split}
\end{equation}
where $[0,1]\omega_i \subset V$ is the segment $\{ \lambda \omega_i \mid 0 \leq \lambda \leq 1 \}$ and the sum denotes Minkowski sum:
\[ A + B = \{ a + b \mid a \in A,\, b \in B\}. \]
It is bounded by hyperplanes of the affine arrangement $\mathcal{H}_\Phi$, and is therefore a union of its alcoves. In fact, it contains exactly
\begin{equation}
\label{eq:volPi}
\Vol(\Pi_\Phi)/\Vol(A_\circ) = n! \cdot a_1 \dots a_n
\end{equation}
alcoves, where $a_1,\dots,a_n$ are the constants in \Cref{eq:highest}. Moreover, the alcoves contained in $\Pi_\Phi$ form an interval in the weak order with minimum element $A_\circ$. We let
\begin{equation}
\label{eq:defI}
\defnn{\mathcal{I}_\Phi} := \{ A \text{ alcove } \mid A \subset \Pi_\Phi \}
\end{equation}
denote this poset and
\begin{equation}
\label{eq:def-Psi}
\defnn{\Psi_\Phi(t)} := \sum_{A \in \mathcal{I}_\Phi} t^{{\rm cover}(A)}
\end{equation}
denote the polynomial whose coefficients keep the distribution of the number of lower covers of elements in $\mathcal{I}_\Phi$. When $\Phi$ is of type A, $\Psi_{A_n}(t) = E_{A_{n-1}}(t)$, the Eulerian polynomial; see for instance \cite[Theorem 27]{ACR21}. In \cite{GSS12looping}, Gashi, Schedler, and Speyer compute the Hilbert polynomial $\mathcal{I}_\Phi$ for the root systems of types A, B, C, and D.

\subsubsection{Alcoved polytopes and generalized hypersimplices}
\label{ss:alcoved-hypersimplices}

Every alcove is determined by a set of inequalities $k_\alpha \leq \langle x , \alpha \rangle \leq k_\alpha + 1$ for some vector of integers $(k_\alpha)_{\alpha \in \Phi^+} \in \ZZ^{\Phi^+}$.
If we relax the constraints on the integers $k_\alpha$ and $k_\alpha + 1$ being consecutive, we obtain a larger class of polytopes.

\begin{definition}[Lam--Postnikov]
A polytope $P \subset V$ is called an \defn{alcoved polytope} if it is defined by inequalities $a_\alpha \leq \langle x , \alpha \rangle \leq b_\alpha$ for some collection of integers\footnote{In their later work, Lam and Postnikov extend the definition of alcoved polytopes to include non-integer inequalities; see, for instance, \cite{LP24Polypositroids}. However, since we are interested in the \emph{alcove triangulation} of these polytopes, we focus on the integer case.} $\{a_\alpha,b_\alpha\}_{\alpha \in \Phi^+}$.
\end{definition}

If $P$ is a full-dimensional polytope, then $P$ is an alcoved polytope if and only if $P$ is the union of some alcoves of $\HH_{\Phi}$. This family of polytopes naturally includes the alcoves of $\HH_\Phi$ and the fundamental parallelepiped in \Cref{eq:def-fund_parall}. Another important family of alcoved polytopes can be obtained by slicing the fundamental parallelepiped with the hyperplanes of $\HH_\Phi$ orthogonal to the highest root $\theta$.

\begin{definition}[Lam--Postnikov]
The generalized hypersimplices, or \defn{$\Phi$-hypersimplices}, are the polytopes
\begin{equation}\label{eq:def-hyper}
\defnn{\Delta_{\Phi,k}} := \{ x \in \Pi_\Phi \mid k-1 \leq \langle x , \theta \rangle \leq k \},\quad \text{for } k = 1 , \dots, h(\Phi) - 1.
\end{equation}
\end{definition}

See \cref{f:H-Cn} for a picture of all $\Phi$-hypersimplices when the root system is of type $C_2$.
When $\Phi$ is the root system of type $A_{n-1}$, the hypersimplex $\Delta_{\Phi,k}$ is integrally equivalent to the usual hypersimplex $\Delta_{n,k}$ discussed in the \nameref{s:intro}, whose volume is determined by the distribution of the descent statistic on $\mathfrak{S}_{n-1}$. More generally, the volume of the \defn{$\Phi$-hypersimplices} is determined by the circular descent statistic $\cdes$ described below.

Set $\defnn{\alpha_0} := - \theta$, and for each $w \in W_\Phi$ and $i = 0,1,\dots,n$ let
\[
d_i(w) = \begin{cases}
0 & \text{if } w(\alpha_i) \in \Phi^+,\\
1 & \text{if } w(\alpha_i) \in \Phi^-.
\end{cases}
\]

\begin{definition}[Lam--Postnikov]
\label{def:LP-cdes}
The \defn{circular descent number} of $w$ is
\[
\defnn{\cdes(w)} := d_0(w) + a_1 d_1(w) + a_2 d_2(w) + \dots + a_n d_n(w),
\]
where $\{a_i\}_i$ are the integers such that $\theta = a_1 \alpha_1 + \dots + a_n \alpha_n$.
\end{definition}

Recall that the \defn{index of connection} is the constant $\defnn{f} \in \ZZ$ such that $|W_\Phi| = f \cdot n! \cdot a_1 \dots a_n$. The following formula for the volume of the $\Phi$-hypersimplices is \cite[Theorem 9.3]{LP18alcoved2}.

\begin{equation}
\label{eq:LP-volume}
\Vol(\Delta_{\Phi,k}) = \frac{1}{f} \left| \{ w \in W_\Phi \mid \cdes(w) = k \} \right|.
\end{equation}

\section{Type C hypersimplices}
\label{ss:our-case}

For the rest of the document, we will focus on the case where $\Phi = \Phi_{C_n}$ is the root system of type $C_n$ in \cref{ex:roots-C}. Recall that its simple roots are
\[
\alpha_1 = 2 e_1 ,\quad
\alpha_2 = e_2 - e_1 ,\quad
\dots, \quad
\alpha_{n-1} = e_{n-1} - e_{n-2},\quad\text{and}\quad
\alpha_n = e_n - e_{n-1}.
\]
The corresponding set of positive roots is
\[
\Phi^+ = \{ e_j \pm e_i \mid 1 \leq i < j \leq n \} \cup \{ 2 e_i \mid 1 \leq i \leq n \},
\]
and the highest root is
\[
\theta = 1 \alpha_1 + 2 \alpha_2 + \dots + 2 \alpha_{n-1} + 2 \alpha_n = 2 e_n.
\]
Hence, $h(\Phi_{C_n}) = 1 + 2 + \dots + 2 + 1 = 2n$.
Its fundamental coweights are
\[
\omega_1 = \tfrac{1}{2}(e_1 + e_2 + \dots + e_n) ,\quad
\omega_2 = e_2 + \dots + e_n ,\quad
\dots, \quad
\omega_{n-1} = e_{n-1} + e_n ,\quad
\omega_n = e_n.
\]
Therefore, the lattice generated by the vertices of $A_\circ$ is $\Lambda = \ZZ\{ \frac{\omega_i}{a_i} \}_i = \frac{1}{2} \ZZ^n$; which contains the vertices of all the alcoves of $\HH_{\Phi_{C_n}}$.
Every alcove is thus \defn{unimodular} with respect to $\Lambda$, in particular they all have normalized volume $1$. This also occurs in type~A, but not in type B or D; see \cref{s:BandD}.
The associated finite Coxeter group is the hyperoctahedral group $W_{\Phi_C} \cong \hypoct_n$ reviewed in \cref{ss:hypoct}.

The \defn{type C fundamental parallelepiped $\Pi_{C_n}$} is the polytope
\[
\defnn{\Pi_{C_n}} := \{ x \in \RR^n \mid 0 \leq 2 x_1, x_2 - x_1 , \dots , x_n - x_{n-1} \leq 1 \}.
\]
For $1 \leq k \leq 2n-1$, the \defn{type C hypersimplex $\Delta_{C_n,k}$} is
\[
\defnn{\Delta_{C_n,k}} := \{ x \in \Pi_n \mid k-1 \leq 2 x_n \leq k \}.
\]

\begin{definition}
\label{def:half-open-C}
The \defn{half-open type C hypersimplices $\Delta'_{C_n,k}$} are defined by
\[
\defnn{\Delta'_{C_n,1}} := \{ x \in \Pi_{C_n} \mid 0 \leq 2 x_n \leq 1 \} = \Delta_{C_n,1},
\]
and, for $2 \leq k \leq 2n-1$,
\begin{equation}
\label{eq:def-half-open}
\defnn{\Delta'_{C_n,k}} := \{ x \in \Pi_{C_n} \mid k-1 < 2 x_n \leq k \}
= \Delta_{C_n,k} \setminus (\Delta_{C_n,k} \cap \Delta_{C_n,k-1}).
\end{equation}
\end{definition}

\section{Proof of Theorem \ref{t:fexc-desB}}
\label{s:proofB}

We prove \cref{t:fexc-desB} using generating functions and adapting techniques of Li \cite{Li12} to our context. In particular, we need to understand the generating function for: \emph{i.} the joint distribution of the statistics $\fexc$ and $\des_B$ on the sets $\{X_n\}_{n \geq 1}$ of signed permutations $w$ satisfying $w^{-1}(1) > 0$ (see \cref{def:setXn}), and \emph{ii.} the values of the Ehrhart polynomial of all type C half-open hypersimplices. We achieve these objectives in the following two propositions, respectively.

\begin{proposition}\label{p:aux1}
The following identity holds.
\begin{equation}\label{eq:p-aux1}
\sum_{n \geq 1} \sum_{w \in X_n} s^{\fexc(w)} t^{\des_B(w)} \dfrac{u^n}{(1-t)^{n+1}} =
\dfrac{1}{(1+s)} \sum_{r \geq 0} \dfrac{(1-us^2)^{r+1} - (1-u)^{r+1}}{(1-u)^{r+1}-s(1-u)(1-us^2)^r} t^r.
\end{equation}
\end{proposition}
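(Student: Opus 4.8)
The plan is to expand the factor $\frac{1}{(1-t)^{n+1}}$ into an Ehrhart series and reduce the identity to a coefficient-by-coefficient (in $t$) statement about a weighted lattice-point count. Concretely, the unimodular triangulation of $\Pi_{C_n}$ by the type $BC_n$ Coxeter arrangement used to establish \Cref{eq:h-parallel2} realizes each $w \in X_n$ as a half-open unimodular simplex $\Delta_w^{\circ}$, whose half-open Ehrhart series is $\sum_{r \ge 0} N_r(w)\, t^r = \frac{t^{\des_B(w)}}{(1-t)^{n+1}}$, where $N_r(w) := |\,r\Delta_w^{\circ} \cap \Lambda\,|$. Substituting this into the left-hand side of \Cref{eq:p-aux1} and exchanging the order of summation, the claim becomes: for every $r \ge 0$,
\[
F_r(u,s) := \sum_{n \ge 1} u^n \sum_{w \in X_n} s^{\fexc(w)} N_r(w)
= \frac{1}{1+s}\cdot\frac{(1-us^2)^{r+1}-(1-u)^{r+1}}{(1-u)^{r+1}-s(1-u)(1-us^2)^r}.
\]
Since the simplices $\{\Delta_w^{\circ}\}_{w \in X_n}$ tile $\Pi_{C_n}$ and $\fexc$ is constant on each, the inner double sum equals $\sum_{x} s^{\fexc(w(x))}$, the count of lattice points $x \in r\Pi_{C_n} \cap \Lambda$ weighted by the flag-excedance of the signed permutation $w(x) \in X_n$ indexing the simplex containing $x$.

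First I would make $w(x)$ explicit. Passing to the box coordinates in which $\Pi_{C_n}$ is integrally equivalent to $[0,1]\times[-1,1]^{n-1}$, the permutation $w(x)$ is read off from the signed sorting of $x$ (the signs of the coordinates together with the linear order of their absolute values), which is exactly how the type $BC_n$ fan refines the box. This lets me rewrite $\fexc(w(x)) = 2\exc(w(x)) + \fneg(w(x))$ in terms of the coordinates: $\fneg$ counts the negative coordinates, while $\exc$ counts the ``wrap-arounds'' in the sorted sequence. The goal of this step is to organize the weighted count as an enumeration of signed, sorted words with letters in $\{0,1,\dots,r\}$: grouping lattice points by the sorted sequence of absolute values (which occupies some of the $r+1$ available levels) should turn the weight into a product of local contributions, each negative coordinate contributing a factor $s$ and each excedance a factor $s^2$.

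Then I would carry out the summation over $n$. Once the local weights are identified, the generating function $\sum_{n \ge 1} u^n(\cdots)$ becomes a geometric-type series whose two kinds of steps produce precisely the factors $(1-u)$ and $(1-us^2)$ in the answer, with the exponents $r+1$ and $r$ recording the number of occupied levels; the prefactor $\frac{1}{1+s}$ should arise from the $X_n$-normalization, that is, from fixing the sign attached to the value $1$ (equivalently, from constraining the distinguished coordinate to the nonnegative range $[0,1]$ rather than $[-1,1]$). Collecting terms and simplifying should then match the claimed closed form for $F_r(u,s)$, completing the proof. An alternative to the geometric picture would be to run the same bookkeeping as a purely algebraic Carlitz-style word enumeration, but I expect the lattice model to make the factorization more transparent.

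The hard part will be the second step: although $\fneg$ is visibly local in the coordinates, the excedance count $\exc(w(x)) = |\{i : w_i > i\}|$ is a priori a global feature of the sorting and must be re-expressed as a local ``ascent/wrap'' statistic of the sorted signed word before the count can factor. Pinning down this local description for type C signed permutations---and checking that the $X_n$ constraint interacts with it through the single clean factor $\frac{1}{1+s}$---is the main obstacle. This is the type C counterpart of the $\exc$/$\des$ transfer handled by Li in the classical case, and I expect it to require a careful Foata-style reading of the signed sorting.
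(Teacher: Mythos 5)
Your reduction to computing $F_r(u,s)$ is sound but carries essentially no content: expanding $\frac{1}{(1-t)^{n+1}}$ and exchanging summations is reversible, and since the half-open Ehrhart series of $\Delta_w^{\circ}$ depends on $w$ only through $\des_B(w)$, the statement ``$F_r(u,s)$ equals the claimed closed form for every $r$'' is coefficient-by-coefficient \emph{equivalent} to \Cref{eq:p-aux1} itself. All of the difficulty of \Cref{p:aux1} is therefore concentrated in the step you explicitly defer: showing that the weight $s^{\fexc(w(x))}$ becomes multiplicative over the ``levels'' of the sorted signed word of $x$, so that the sum over $n$ and over lattice points factors into powers of $(1-u)$ and $(1-us^2)$ with prefactor $\tfrac{1}{1+s}$. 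While $\fneg(w(x))$ is indeed visible in the signs of the coordinates, $\exc(w(x)) = |\{i : w(x)_i > i\}|$ compares the sorting pattern against the positions $1,\dots,n$, and you give no argument --- not even a sketch --- for why it admits a local description under which the weighted count factors. Naming where the factors $(1-u)$, $(1-us^2)$, $\tfrac{1}{1+s}$ ``should'' come from is a guess about the shape of the answer, not a derivation; as written, the proposal is a correct restatement of the problem together with a plan whose key lemma is missing.

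Two further cautions. First, your starting point --- that the $BC_n$ triangulation of $[0,1]\times[-1,1]^{n-1}$ admits a half-open decomposition indexed by $X_n$ with half-open Ehrhart series $t^{\des_B(w)}/(1-t)^{n+1}$ --- is nowhere proved in the paper: \Cref{eq:h-parallel2} is only asserted in the introduction and is ultimately deduced (\cref{t:half-weak1}) from \cref{t:fexc-desB}, which rests on \Cref{p:aux1}; so you would need to establish that tiling statement independently (doable by a shelling argument analogous to the one in \cref{ss:proof2}, but it must be done) or your argument becomes circular. Second, for contrast: the paper's proof sidesteps your missing lemma entirely by importing the joint distribution of $(\fexc,\fdes)$ on all of $\hypoct_n$ from Foata--Han, converting $\fdes$ to $\des_B$ via a parity substitution in $t$, and then restricting from $\hypoct_n$ to $X_n$ through three explicit sign-toggling bijections ($B^{(1)}_n = B_{n-1}$, $B^{(-1)}_n = st\,B^{(1)}_n$, and $B^{(-k)}_n = s\,B^{(k)}_n$ for $k>1$), which is where the factor $\tfrac{1}{1+s}$ actually arises. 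If you insist on the lattice-point route, you are in effect re-proving a restricted form of Foata--Han's identity by a Carlitz-style word enumeration --- possible in principle, but a substantial undertaking that your outline does not begin.
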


\begin{proposition}\label{p:aux2}
For every $r \geq 0$,
\begin{equation}\label{eq:p-aux2}
\sum_{n \geq 1} \sum_{k \geq 0} L_{\Delta'_{C_n,k+1}}(r) s^k u^n =
\dfrac{(1-us^2)^{r+1} - (1-u)^{r+1}}{(1+s) \big( (1-u)^{r+1}-s(1-u)(1-us^2)^r \big)}.
\end{equation}
\end{proposition}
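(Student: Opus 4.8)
The plan is to count the lattice points of the dilates directly and then assemble the two-variable generating function, following the strategy of Li \cite{Li12}.

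First I would trade the geometry for a count of integer tuples. For a point $x\in r\Pi_{C_n}\cap\tfrac12\ZZ^n$, set $g_1=2x_1$ and $g_i=2(x_i-x_{i-1})$ for $2\le i\le n$. The defining inequalities of $r\Pi_{C_n}$ say exactly that $g_1\in\{0,\dots,r\}$ and $g_i\in\{0,\dots,2r\}$ for $i\ge2$, and $x\mapsto(g_1,\dots,g_n)$ is a bijection onto such tuples, with $2x_n=g_1+\dots+g_n$. Writing $m=\sum_i g_i$, the point lies in $r\Delta'_{C_n,k}$ with $k=\max(1,\lceil m/r\rceil)$, so it contributes $s^{k-1}=s^{\mathrm{block}(m)}$ to the inner sum of \eqref{eq:p-aux2}, where $\mathrm{block}(m):=\max(0,\lceil m/r\rceil-1)$. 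Recording $m$ with a variable $q$ and summing the geometric series in $n$ gives
\[
G(q,u):=\sum_{n\ge1}u^n\sum_{(g_1,\dots,g_n)}q^{\sum_i g_i}=\frac{u(1-q^{r+1})}{(1-q)-u(1-q^{2r+1})}.
\]

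Next I would separate the block weight from the count. Let $d_{n,m}$ be the number of admissible tuples (for fixed $n$) with $\sum_i g_i\le m$, so that $d_{n,m}=[q^m]\,Q_n(q)/(1-q)$ where $Q_n(q)=\sum_{(g_i)}q^{\sum g_i}$. Since the number of tuples with $m$ in block $k$ is $d_{n,(k+1)r}-d_{n,kr}$ for $k\ge1$ (and $d_{n,r}$ for $k=0$), an Abel summation gives $\sum_{k\ge0}L_{\Delta'_{C_n,k+1}}(r)\,s^k=(1-s)\sum_{k\ge0}s^k d_{n,(k+1)r}$. Summing against $u^n$ and reindexing, the left-hand side of \eqref{eq:p-aux2} becomes
\[
\frac{1-s}{s}\big(E-G(0,u)\big),\qquad E:=\sum_{k\ge0}s^k\,[q^{kr}]\,D(q,u),\quad D(q,u):=\frac{G(q,u)}{1-q},
\]
with $G(0,u)=u/(1-u)$. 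The crux is then to evaluate the $r$-section $E$. By the roots-of-unity multisection, $E=\tfrac1r\sum_{q^r=s}\wt D(q)$, where $\wt D(q)$ is obtained from $D(q,u)$ by using $q^r=s$ to substitute $q^{r+1}=sq$ and $q^{2r+1}=s^2q$, namely
\[
\wt D(q)=\frac{u(1-sq)}{(1-q)\big((1-u)-q(1-us^2)\big)}.
\]
Crucially, this collapses to a function with only two simple poles, at $q=1$ and at $q=p:=(1-u)/(1-us^2)$. After partial fractions in $q$ and the identity $\sum_{q^r=s}\tfrac1{q-p}=-rp^{r-1}/(p^r-s)$, the pole at $q=1$ produces a factor $1-s$ and the pole at $q=p$ a factor $1-s\rho^r$, where $\rho:=1/p=(1-us^2)/(1-u)$; using $\rho-1=u(1-s)(1+s)/(1-u)$ to simplify one obtains
\[
E=\frac{(\rho-1)+s(\rho^r-1)}{(1-s)(1+s)(1-s\rho^r)},\qquad\text{whence}\qquad\frac{1-s}{s}\big(E-G(0,u)\big)=\frac{\rho^{r+1}-1}{(1+s)(1-s\rho^r)}.
\]
Finally, clearing denominators by $(1-u)^{r+1}$ and factoring $\rho^{r+1}-1$ through $(1-us^2)-(1-u)=u(1-s)(1+s)$ recovers the stated right-hand side of \eqref{eq:p-aux2}.

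I expect the main obstacle to be the evaluation of $E$: one must notice that, restricted to the locus $q^r=s$, the high-degree rational function $D(q,u)$ degenerates to a two-pole function whose poles lie exactly at $q=1$ and $q=(1-u)/(1-us^2)$, so that $p^r\in\{1,\rho^{-r}\}$ yields the clean denominators $1-s$ and $1-s\rho^r$. All the manipulations above are to be read as identities of formal power series in $u$ with coefficients rational in $s$, which legitimizes the multisection step without any convergence concerns.
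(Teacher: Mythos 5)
Your proof is correct, and it is worth comparing with the paper's, because while the overall architecture is the same, the technical heart is carried out by genuinely different means. Both arguments start from the same change of coordinates, turning $L_{\Delta'_{C_n,k+1}}(r)$ into a count of integer tuples in $[0,r]\times[0,2r]^{n-1}$ graded by coordinate sum, and both must then extract the $r$-section of a rational generating function. The paper does this bare-hands: it writes $L_{\Delta'_{C_n,k+1}}(r)$ as an explicit window $[x^{kr+1}]+\dots+[x^{(k+1)r}]$ of coefficient extractions (with a separate $\delta_{k,0}$ correction for the origin), sums over $n$, and then performs the section extraction by introducing $z=(1-ux^{2r})/(1-u)$, discarding every monomial whose exponent is not a multiple of $r$, and substituting $x^r\mapsto s$. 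You instead pass to cumulative counts $d_{n,m}$ and an Abel summation (which absorbs the $k=0$/origin subtlety automatically), reducing everything to the single section sum $E=\sum_{k\ge0}s^k[q^{kr}]D(q,u)$, and evaluate it by the roots-of-unity filter $E=\tfrac1r\sum_{q^r=s}D(q,u)$; your decisive observation—that on the locus $q^r=s$ the function collapses to the two-pole rational function $\wt{D}(q)$—lets partial fractions and the identity $\sum_{q^r=s}(x-q)^{-1}=rx^{r-1}/(x^r-s)$ finish the job. I checked the algebra: your expression for $E$, the simplification of $\tfrac{1-s}{s}\bigl(E-G(0,u)\bigr)$ to $(\rho^{r+1}-1)/\bigl((1+s)(1-s\rho^r)\bigr)$, and the final clearing of denominators by $(1-u)^{r+1}$ all hold and reproduce the right-hand side of \Cref{eq:p-aux2}. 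What each approach buys: yours is more systematic and explains structurally where the two factors of the final denominator come from (one per pole of $\wt{D}$), whereas the paper's manipulation stays entirely inside $\ZZ[[s,u]]$ and so requires no formal $r$-th root of $s$ and no symmetric-function justification. Two minor points to tighten in your write-up: the multisection presupposes $r\ge 1$, so the case $r=0$ (where both sides of \Cref{eq:p-aux2} equal $u/(1-u)$ under the paper's convention $L_{\Delta'_{C_n,k+1}}(0)=\delta_{k,0}$) should be checked directly; and the partial-fraction step tacitly uses that the poles $q=1$ and $q=p$ are distinct, which holds because $p-1=-u(1-s^2)/(1-us^2)$ is a nonzero formal series.
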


We give a complete proof of these two results below, but we first use them to prove one of our main results.

\begin{proof}[{Proof of \cref{t:fexc-desB}}]
Multiply both sides of \Cref{eq:p-aux2} by $t^r$ and sum over all $r \geq 0$.
The resulting right-hand side is exactly the right-hand side of \Cref{eq:p-aux1}. Therefore,
\[
\sum_{n \geq 1} \sum_{k \geq 0} \sum_{r \geq 0}
L_{\Delta'_{C_n,k+1}}(r) t^r s^k u^n
=
\sum_{n \geq 1} \sum_{w \in X_n}
s^{\fexc(w)} t^{\des_B(w)} \dfrac{u^n}{(1-t)^{n+1}}.
\]
Extracting the coefficient of $s^{k-1} u^n$ on both sides of this equation, we obtain
\[
\sum_{r \geq 0} L_{\Delta'_{C_n,k}}(r) t^r =
(1-t)^{-(n+1)} \sum_{\substack{ w \in X_{n} \\ \fexc(w) = k -1}} t^{\des_B(w)}.
\]
The result now follows by the definition of the $h^*$-polynomial (\Cref{eq:def-h*}) and the fact that $\Delta'_{C_n,k}$ has dimension $n$.
\end{proof}

We now proceed to prove the two propositions in detail.

\begin{proof}[{Proof of \cref{p:aux1}}]
Foata and Han \cite[Equation (9.3)]{fh09signedV} show that
\begin{multline*}
\sum_{n \geq 0} \sum_{w \in \hypoct_n} s^{\fexc(w)} t^{\fdes(w)} \dfrac{u^n}{(1-t^2)^n} = \\
\sum_{r \geq 0} \bigg( \dfrac{(1-s)(1-t)t^{2r}}{(1-u)^{r+1}(1-us^2)^{-r}-s(1-u)}
- \dfrac{(1-s)(1-t)t^{2r+1}}{(1-u)^{r+1}(1-us^2)^{-r}-s(1-us)} \bigg).
\end{multline*}
We will first manipulate this expression to replace the statistic $\fdes$ with $\des_B$, and then to restrict the domain of the internal sum from all signed permutations to those for which $w^{-1}(1) > 0$.

Recall from \Cref{eq:desBandfdes} that $\des_B(w) = \floor{\tfrac{\fdes(w)+1}{2}}$ for all $w \in \hypoct_n$.
Since the factors $\frac{1}{(1-t^2)^n}$ only involve even powers of~$t$,
we can expand the expression above in powers of~$t$ and substitute $t^{2k-1},t^{2k} \mapsto t^k$
to obtain the generating function for the statistics $(\fexc,\des_B)$.
For the two terms inside the sum of the right-hand side, these substitutions are, respectively,
\begin{align*}
(1-t)t^{2r} = t^{2r} - t^{2r+1} &\mapsto t^r - t^{r+1} = (1-t)t^r , & \text{and} \\
(1-t)t^{2r+1} = t^{2r+1} - t^{2r+2} & \mapsto t^{r+1} - t^{r+1} = 0.
\end{align*}
Therefore, the generating function for the joint distribution of $(\fexc,\des_B)$ is
\begin{equation}
\label{eq:FH}
\sum_{n \geq 0} \sum_{w \in \hypoct_n} s^{\fexc(w)} t^{\des_B(w)} \dfrac{u^n}{(1-t)^{n+1}}
= \sum_{r \geq 0} \dfrac{(1-s) (1-us^2)^r t^r}{(1-u)^{r+1}-s(1-u)(1-us^2)^r}.
\end{equation}
Notice that we have divided both sides by $(1-t)$ and multiplied the right-hand side by $1 = \frac{(1-us^2)^r}{(1-us^2)^r}$.

Now, for $n \geq 1$ and $k \in \dbra{n}$, let
\[
B^{(k)}_n(s,t) = \sum_{ \substack{ w \in \hypoct_n \\ w^{-1}(1) = k } } s^{\fexc(w)} t^{\des_B(w)}.
\]
Also, let $B_0(s,t) = 1$ and for $n > 0$,
\[
B_n(s,t) = \sum_{k \in \dbra{n}} B^{(k)}_n(s,t),
\qquad \text{and}\qquad
B^{>1}_n(s,t) = \sum_{k > 1} B^{(k)}_n(s,t).
\]
Thus, \cref{eq:FH} is the ordinary generating function of $\frac{B_n(s,t)}{(1-t)^{n+1}}$,
and we want to compute the ordinary generating function of $\frac{B^{(1)}_n(s,t) + B^{>1}_n(s,t)}{(1-t)^{n+1}}$.

Observe that:
\begin{itemize}
\item If $w = 1 w_2 \dots w_n$ and $u$ is the standardization of $w_2 \dots w_n$
(that is, $|u_i| = |w_{i+1}|-1$ and $u_i w_{i+1} > 0$),
then $\fexc(u) = \fexc(w)$ and $\des_B(u) = \des_B(w)$.
Indeed, $1$ was neither an excedance nor a negation of $w$ and $i$ is an excedance (resp. negation) of $u$ if and only if $i+1$ is an excedance (resp. negation) of $w$. Similarly, since $w_1 > 0$, $0$ is not a descent of $w$ and $i \in [0,n-2]$ is a descent of $u$ if and only if $i+1$ is a descent of $w$.
Thus,
\[
B^{(1)}_n(s,t) = B_{n-1}(s,t).
\]

\item If $w = 1 w_2 \dots w_n$ and $u = \ol{1} w_2 \dots w_n$, then
$\fexc(u) = \fexc(w) + 1$ (since $1$ is now a negation of $u$) and
$\des_B(u) = \des_B(w) + 1$ (since $0$ is now a descent of $u$).
Thus,
\[
B^{(-1)}_n(s,t) = st B^{(1)}_{n}(s,t).
\]

\item If $w^{-1}(1) = k > 1$ and $u$ is obtained from $w$ by negating $w_k = 1$, then
$\fexc(u) = \fexc(w) + 1$ (since $k$ is now a negation of $u$) and
$\des_B(u) = \des_B(w)$ (since $w_{k-1}$ and $w_{k+1}$ are either both larger than $1$ and than $-1$, or both smaller than $1$ and than $-1$).
Thus,
\[
B^{(-k)}_n(s,t) = s B^{(k)}_{n}(s,t), \qquad\text{for all } k > 1.
\]
\end{itemize}
Therefore, $B_n(s,t) = (1 + st) B_{n-1}(s,t) + (1 + s) B^{>1}_n(s,t)$, and
\[
B^{(1)}_n(s,t) + B^{>1}_n(s,t)
= B_{n-1}(s,t) + \dfrac{B_n(s,t) - (1+st)B_{n-1}(s,t)}{(1+s)}
= \dfrac{B_n(s,t) + s(1-t)B_{n-1}(s,t)}{(1+s)}.
\]
Using the generating function for $\frac{B_n(s,t)}{(1-t)^{n+1}}$ in \Cref{eq:FH}, we get
\begin{align*}
\sum_{n \geq 1} (B^{(1)}_n(s,t) + B^{>1}_n(s,t)) \dfrac{u^n}{(1-t)^{n+1}}
=& \dfrac{1}{(1+s)} \left( \sum_{n \geq 1} B_n(s,t) \dfrac{u^n}{(1-t)^{n+1}} +
su \sum_{n \geq 1} B_{n-1}(s,t) \dfrac{u^{n-1}}{(1-t)^n} \right) \\
=& \dfrac{1}{(1+s)} \left( \sum_{r \geq 0} \dfrac{(1+su) (1-s) (1-us^2)^r t^r}{(1-u)^{r+1}-s(1-u)(1-us^2)^r}
- \dfrac{1}{(1-t)} \right) \\
=& \dfrac{1}{(1+s)} \sum_{r \geq 0} \bigg( \dfrac{(1 +su) (1-s) (1-us^2)^r}{(1-u)^{r+1}-s(1-u)(1-us^2)^r} - 1 \bigg)t^r \\
=& \dfrac{1}{(1+s)} \sum_{r \geq 0} \dfrac{(1-us^2)^{r+1} - (1-u)^{r+1}}{(1-u)^{r+1}-s(1-u)(1-us^2)^r} t^r,
\end{align*}
where in the last step we use that
$
(1+us)(1-s)(1-us^2)^r - s(1-u)(1-us^2)^r = (1-us^2)^{r+1}.
$
\end{proof}

\begin{proof}[{Proof of \cref{p:aux2}}]
Recall that the hypersimplex $\Delta_{C_n,k+1}$ is determined by inequalities
\[
0 \leq 2 x_1 \,,\, x_2 - x_1 \,,\, \dots \,,\, x_n - x_{n-1} \leq 1 \quad\text{ and }\quad k \leq 2x_n \leq k+1.
\]
Under the change of coordinates
\[
y_1 = 2 x_1 ,\ y_2 = 2(x_2 - x_1) ,\ \dots, \ \ y_n = 2(x_n - x_{n-1}),
\]
counting half-integer points in dilations of $\Delta_{C_n,k+1}$
corresponds to counting integer points inside dilations of
\[
\{ y \in \RR^n \mid 0 \leq y_1 \leq 1 \,,\:\: 0 \leq y_2,\dots,y_n \leq 2 \,,\:\: k \leq y_1 + \dots + y_n \leq k+1 \}.
\]
Therefore, for any $k > 0$ and $r \in \ZZ_{\geq 0}$,
\begin{multline*}
L_{\Delta'_{C_n,k+1}}(r) = \Big| \{ y \in \ZZ^n \mid 0 \leq y_1 \leq r \,,\:\: 0 \leq y_2,\dots,y_n \leq 2r \,,\:\: kr < y_1+\dots+y_n\leq (k+1)r\} \Big|.
\end{multline*}
For $k = 0$, we need to add $1$ to the formula above to account for the point $(0,\dots,0) \in \Delta'_{C_n,1} = \Delta_{C_n,1}$.
Thus,
\begin{align*}
L_{\Delta'_{C_n,k+1}}(r)
=& \delta_{k,0} + ([x^{kr+1}]+\cdots+[x^{(k+1)r}]) \left(\frac{1-x^{r+1}}{1-x}\right) \left(\frac{1-x^{2r+1}}{1-x}\right)^{n-1} \\
=& \delta_{k,0} + [x^{kr}] (x^{-1}+\cdots+x^{-r}) \left(\frac{1-x^{r+1}}{1-x}\right) \left(\frac{1-x^{2r+1}}{1-x}\right)^{n-1} \\
=& \delta_{k,0} + [x^{kr}] \frac{(1-x^r)(1-x^{r+1})(1-x^{2r+1})^{n-1}}{(1-x)^{n+1}x^r},
\end{align*}
where $\delta_{k,0}$ denotes the Kronecker delta and $[x^j] f(x)$ denotes the coefficient of $x^j$ in the power series expansion of $f(x)$.
Therefore,
\begin{align*}
\sum_{n \geq 1} L_{\Delta'_{C_n,k+1}}(r) u^n
=& \sum_{n \geq 1} \delta_{k,0} u^n + \sum_{n \geq 1}[x^{kr}]\frac{(1-x^r)(1-x^{r+1})(1-x^{2r+1})^{n-1} }{(1-x)^{n+1}x^r}u^n\\
=& \delta_{k,0}\dfrac{u}{1-u} + [x^{kr}] \frac{(1-x^r)(1-x^{r+1})u}{(1-x)^2 x^r}\sum_{n \geq 1}\left(\frac{(1-x^{2r+1})u}{1-x}\right)^{n-1}\\
=& \delta_{k,0}\dfrac{u}{1-u} + [x^{kr}] \frac{(1-x^r)(1-x^{r+1})u}{(1-x) x^r (1-x-u+ux^{2r+1})}.
\end{align*}
So,
\[
\sum_{n \geq 1} \sum_{k \geq 0} L_{\Delta'_{C_n,k+1}}(r) s^k u^n =
\dfrac{u}{1-u} + \sum_{k \geq 0} s^k [x^{kr}] \frac{(1-x^r) (1-x^{r+1}) u}{(1-x) x^r (1-x-u+ux^{2r+1})}.
\]
Let $z = \frac{1-ux^{2r}}{1-u}$.
We have,
\begin{equation}\label{eq:long-proof-aux1}
\sum_{n \geq 1} \sum_{k \geq 0} L_{\Delta'_{C_n,k+1}}(r) s^k u^n
= \dfrac{u}{1-u} + \sum_{k \geq 0} s^k [x^{kr}] \frac{(1-x^r) (1-x^{r+1})u}{x^r (1-x) (1-u) (1-zx)}.
\end{equation}
Since
\[
\sum_{k \geq 0} s^k [x^{kr}] x^m = \begin{cases}
s^j & \text{if } m = rj \text{ for some } j, \\
0 & \text{otherwise,}
\end{cases}
\]
we can drop all the terms of the form $c_m x^m$ where $m$ is not a multiple of $r$ in the expansion of $\frac{(1-x^r) (1-x^{r+1})u}{x^r (1-x) (1-u) (1-zx)}$ above.
Hence, we can replace the factor
\begin{align*}
\frac{(1-x^{r+1})}{x^r(1-x)(1-zx)}
=& (x^{-r} + x) \Big( \sum_{i \geq 0} x^i \Big) \Big( \sum_{j \geq 0} (zx)^j \Big) \\
=& (x^{-r} + x) \sum_{m \geq 0} (1 + z + \dots + z^m) x^m \\
=& \sum_{m \geq 0} (1 + z + \dots + z^m) x^{m-r} - \sum_{m \geq 0} (1 + z + \dots + z^m) x^{m+1}
\end{align*}
on the right-hand side \Cref{eq:long-proof-aux1} with
\begin{align*}
& x^{-r} + \sum_{j \geq 1} (1 + z + \dots + z^{rj}) x^{rj-r} - \sum_{j \geq 1} (1 + z + \dots + z^{rj-1}) x^{rj} \\
=& x^{-r} + \sum_{j \geq 0} (z^{rj} + z^{rj+1} + \dots + z^{r(j+1)})x^{rj} \\
=& x^{-r} + \sum_{j \geq 0} (1 + z^{1} + \dots + z^{r})z^{rj}x^{rj} \\
=& x^{-r} + \dfrac{1-z^{r+1}}{1-z} \cdot \dfrac{1}{1-z^rx^r}.
\end{align*}
The contribution of the term $x^{-r}$ above to the right-hand side of \Cref{eq:long-proof-aux1} is
\[
\sum_{k \geq 0} s^k [x^{kr}] \frac{(1-x^r) u}{(1-u)} x^{-r} = \dfrac{u}{1-u} \sum_{k \geq 0} s^k [x^{kr}] (x^{-r} - 1) = - \dfrac{u}{1-u}.
\]
Thus,
\begin{align*}
\sum_{n \geq 1} \sum_{k \geq 0} L_{\Delta'_{C_n,k+1}}(r) s^k u^n
=& \sum_{k \geq 0} s^k [x^{kr}] \frac{(1-x^r) (1-z^{r+1})u}
{(1-z) (1-u) (1-z^rx^r)} \\
=& \frac{(1-s) \left(1 - \left(\tfrac{1-us^2}{1-u}\right)^{r+1} \right) u}
{\left(1 - \left(\tfrac{1-us^2}{1-u}\right)\right) (1-u) \left(1 - \left(\tfrac{1-us^2}{1-u}\right)^r s \right)} \\
=& \frac{(1-s) \left( (1-u)^{r+1} - (1-us^2)^{r+1} \right) u}
{\left( (1-u)-(1-us^2) \right) (1-u) \left( (1-u)^r - (1-us^2)^r s \right)} \\
=& \frac{(1-s) ((1-u)^{r+1} - (1-us^2)^{r+1})}
{(s^2-1) \left( (1-u)^{r+1} - s (1-u) (1-us^2)^r \right)} \\
=& \frac{ (1-us^2)^{r+1} - (1-u)^{r+1}}
{(1+s) \left( (1-u)^{r+1} - s (1-u) (1-us^2)^r \right)} \qedhere
\end{align*}
\end{proof}

\section{Combinatorial Model: Poset on Signed Permutations}
\label{s:poset}

Using a novel statistic on signed permutations, we develop a combinatorial model for the poset $\mathcal{I}_{C_n}$ of alcoves in the fundamental parallelepiped introduced in \Cref{eq:defI}. We then use this model to prove \cref{t:cdes-basc}.

\subsection{Big ascents}

Endow $\dbra{n}$ with the order induced from $\ZZ$:
\[
\ol{n} < \dots < \ol{2} < \ol{1} < 1 < 2 < \dots < n.
\]
We write $\defnn{i \ll j}$ to indicate that $i < k < j$ for some $k \in \dbra{n}$. For instance, $4 \ll 8$ and $\ol{2} \ll 1$, but $\ol{1} \not\ll 1$. We use $\defnn{i^+}$ to denote the \defn{cyclic successor} of $i \in \dbra{n}$ in the order above. For example, $\ol{1}^+ = 1$, $2^+ = 3$, and $n^+ = \ol{n}$.

\begin{definition}\label{d:BAsc}
We say that a signed permutation $w \in \hypoct_n$ has a \defn{big ascent} at position $i \in \{\ol{1}\} \cup [n]$ if $w_i \ll w_{i^+}$.
Let $\defnn{\BAsc(w)}$ denote the set of big ascents of $w$ and $\defnn{\basc(w)} := |\BAsc(w)|$. Thus,
\begin{align*}
\ol{1} \in \BAsc(w) & \quad\text{if and only if}\quad w_1 \geq 2,\\
\text{for } i \in [n-1], \, i \in \BAsc(w) & \quad\text{if and only if}\quad w_i \ll w_{i+1},\\
n \in \BAsc(w) & \quad\text{if and only if}\quad w_n \leq \ol{2}.
\end{align*}
See \cref{f:PosetSign} for many examples of big ascents.
\end{definition}

Observe that $\BAsc(w)$ is always a proper subset of $\{\ol{1}\} \cup [n]$; otherwise, the inequalities
\[
\ol{w_1} \ll w_1 \ll \dots \ll w_n \ll \ol{w_n},
\]
would imply the existence of a strict chain of length $(n+2)+(n+1)$ within $\dbra{n}$. We invite the reader to verify that, in fact, $0 \leq \basc(w) \leq n-1$ for all signed permutations $w \in \hypoct_n$, and that the minimum value is attained by exactly two signed permutations.

\begin{remark}
When all the entries in the window notation of $w \in \hypoct_n$ are positive, the statistic $\basc(w)$ is precisely the big ascent statistic for unsigned permutations that appears in the work of Sanyal and Stump \cite{SS18Lipschitz}.
\end{remark}

\subsection{Poset structure}

Recall from \cref{def:setXn} that $X_n$ denotes the collection of signed permutations $w \in \hypoct_n$ for which $w^{-1}(1) > 0$.

\begin{definition}
\label{def:cover-rel}

Given $u,w\in X_n$ with $w = w_1 w_2 \dots w_{n-1} w_n$,
we write $\defnn{u \rightharpoonup w}$ if one of the following three conditions holds:
\begin{equation}\label{eq:cover-rel}
\left\lbrace
\begin{array}{lll}
w_1 \geq 2 & \text{and} &
u = \red{\ol{w_1}} w_2 \dots w_{n-1} w_n; \text{ or} \\
i \in \cap [n-1],\, w_i \ll w_{i+1} & \text{and} &
u = w_1 \dots w_{i-1} \red{w_{i+1} w_i} w_{i+2} \dots w_n; \text{ or} \\
w_n \leq \ol{2} & \text{and} &
u = w_1 w_2 \dots w_{n-1} \red{\ol{w_n}}.
\end{array}
\right.
\end{equation}
Thus, for each $w \in X_n$, there are exactly $\basc(w)$ signed permutations $u \in X_n$ such that $u \rightharpoonup w$.
\end{definition}

\begin{example}
Below we display three instances of the relation $\rightharpoonup$ in $X_4$.
\begin{align*}
\highlight{ \ol{3} } 1 \, \ol{4} \, 2 \rightharpoonup \highlight{ 3 } 1 \, \ol{4} \, 2,
\hspace*{.15\linewidth}
\ol{3} \highlight{ 1 \, \ol{4} } 2 \rightharpoonup \ol{3} \highlight{ \ol{4} \, 1 } 2,
\hspace*{.15\linewidth}
\ol{3} \, \ol{4}\, 1 \highlight{ 2 } \rightharpoonup \ol{3} \, \ol{4}\, 1 \highlight{ \ol{2} }.
\end{align*}
The permutations on the right have big ascents at positions $\ol{1}$, $2$, and $4$, respectively.
\end{example}

\begin{theorem}
\label{t:poset}
The relation $\rightharpoonup$
above
is the covering relation of a partial order $\leq$ on $X_n$.
Moreover, there is an isomorphism $A: X_n \to \mathcal{I}_{C_n}$ such that, for all $w \in X_n$, $A(w)$ lies inside the hypersimplex $\Delta_{C_n,\cdes(w^{-1})}$.
\end{theorem}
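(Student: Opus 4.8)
The plan is to build an explicit poset isomorphism $A \colon (X_n,\leq) \to \mathcal{I}_{C_n}$ from the combinatorial order on signed permutations to the weak order on the alcoves inside $\Pi_{C_n}$, and to check that it carries the relation $\rightharpoonup$ onto the cover relation of the weak order. The guiding principle is that the $n+1$ positions $\{\ol{1}\}\cup[n]$ indexing big ascents should correspond to the $n+1$ simple reflections $s_1,\dots,s_n,s_0$ of the affine Coxeter group $\wt{W}_{\Phi_{C_n}}$: position $\ol{1}$ to $\alpha_1 = 2e_1$, position $i\in[n-1]$ to $\alpha_{i+1}=e_{i+1}-e_i$, and position $n$ to the affine wall $\alpha_0=-\theta=-2e_n$. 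Under this dictionary the three moves of \cref{eq:cover-rel} are exactly right multiplication of a signed permutation by $s_1$, by $s_{i+1}$, and by the reflection $s_\theta=s_{2e_n}$ negating the last coordinate. Since $(ws)^{-1}=s\,w^{-1}$ for a reflection $s$, these are equivalently \emph{left} multiplications of $w^{-1}$ by the corresponding reflections, which is precisely the data that \cref{def:LP-cdes} reads off of $w^{-1}$ through its action on $\alpha_0=-\theta,\alpha_1,\dots,\alpha_n$. This is the structural reason $\cdes(w^{-1})$, rather than $\cdes(w)$, governs the hypersimplex label.

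First I would confirm that $\rightharpoonup$ is the cover relation of a partial order. It suffices to produce a rank function strictly increasing along covers; the number of hyperplanes of $\HH_{\Phi_{C_n}}$ separating a candidate alcove from $A_\circ$ is the natural choice, but one can also argue intrinsically on $X_n$ by checking that each of the three moves strictly raises a suitable affine inversion count. Acyclicity then yields the order $\leq$, and by the last sentence of \cref{def:cover-rel} every $w$ has exactly $\basc(w)$ lower covers.

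Next I would fix the bijection. The cardinalities agree: the involution that negates the window entry equal to $\pm1$ pairs $X_n$ with its complement, so $|X_n|=\tfrac12|\hypoct_n|=n!\,2^{n-1}$, which by \cref{eq:volPi} (with $a_1\cdots a_n=2^{n-1}$, index of connection $f=2$) equals $|\mathcal{I}_{C_n}|$. I would then define $A$ recursively: set $A(\mathrm{id})=A_\circ$, the unique element of $X_n$ with $\basc=0$ and the minimum of the interval $\mathcal{I}_{C_n}$, and let $A(w)$ be the alcove obtained from $A(u)$ by crossing the wall indexed by the big ascent used in a cover $u\rightharpoonup w$. The crux is \emph{well-definedness}: distinct saturated chains from $\mathrm{id}$ to $w$ must yield the same alcove. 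I would prove this as a confluence (diamond) statement, analyzing how two big ascents of a common element interact and matching the outcome against the braid and commutation relations of $\wt{W}_{\Phi_{C_n}}$, so that a saturated chain becomes a reduced word for a single affine Weyl element $\wh{w}$ with $A(w)=\wh{w}A_\circ$. Together with the matching cardinalities and the fact that both posets are generated by their covers from a unique minimum, this makes $A$ a cover-preserving bijection in both directions, hence an isomorphism.

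Finally I would identify the hypersimplex containing $A(w)$. Since $\Delta_{C_n,k}$ is cut out by $k-1\leq\langle x,\theta\rangle\leq k$, the label on the interior of an alcove is $k=\lfloor\langle x,\theta\rangle\rfloor+1=\lfloor 2x_n\rfloor+1$, so the claim reduces to $\cdes(w^{-1})=\lfloor 2x_n\rfloor+1$. The base case holds because $\cdes(e)=1$ and $A_\circ\subset\Delta_{C_n,1}$. For the inductive step I would track, move by move, how $\lfloor\langle x,\theta\rangle\rfloor$ and $\cdes(w^{-1})$ each change under the three wall-crossings, using \cref{def:LP-cdes} and the coefficients $a_1=1$, $a_2=\dots=a_n=2$. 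I expect the main obstacle to be twofold: establishing the confluence that makes $A$ well defined, and controlling the affine wall at position $n$. Crossing $H_{\theta,\cdot}$ (and, at $i=n-1$, the wall $H_{\alpha_n,\cdot}$) is exactly what can increment $\lfloor 2x_n\rfloor$, and there the interplay between the term $d_0$ and the finite data of $w^{-1}$ in $\cdes$ is most delicate; the interior swaps at positions $i<n-1$ and the reflection at position $\ol{1}$ leave $x_n$ untouched and are comparatively routine bookkeeping.
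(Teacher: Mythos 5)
Your structural dictionary (big-ascent positions $\{\ol{1}\}\cup[n]$ corresponding to the walls $2e_1$, $e_{i+1}-e_i$, and the affine wall orthogonal to $\theta=2e_n$; the three moves being right multiplication by the corresponding reflections; and this being the reason $\cdes(w^{-1})$ rather than $\cdes(w)$ appears) is sound, and your cardinality count $|X_n| = n!\,2^{n-1} = |\mathcal{I}_{C_n}|$ is exactly the one the paper uses. However, the central step of your plan has a genuine gap: well-definedness of the recursively defined map $A$. You propose to settle it by a ``confluence (diamond) statement,'' but the poset $(X_n,\rightharpoonup)$ does not satisfy the diamond property. A concrete counterexample is visible in \cref{f:PosetSign}: in $X_3$, the element $\ol{2}\,1\,3$ covers exactly $1\,\ol{2}\,3$ and $\ol{2}\,3\,1$; their unique lower covers are $1\,3\,\ol{2}$ and $3\,\ol{2}\,1$ respectively, which are distinct, and the two descending chains only reconverge two steps further down, at $3\,1\,\ol{2}$. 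This hexagon reflects the order-$3$ braid relation between the walls $e_2-e_1$ and $e_3-e_2$; octagons likewise occur for the order-$4$ relations involving $2e_1$ or the affine wall. So a local diamond analysis cannot show that all saturated chains from ${\rm id}_n$ to $w$ produce the same alcove: you would need the full word property (Tits' theorem) for chains in $X_n$, and invoking it presupposes that such chains are reduced words for a single element of $\wt{W}_{C_n}$ --- which is precisely what well-definedness asserts. A smaller instance of the same circularity affects your first step: the proposed rank function (number of hyperplanes separating $A(w)$ from $A_\circ$) presupposes the map $A$, and the ``intrinsic affine inversion count'' on $X_n$ is never defined.

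The paper avoids all of this by defining $A(w)$ in closed form rather than recursively: the integer translation vector $v^{n+1}(w)$ has coordinates $v^{n+1}_i(w) = \left|[i-1]\cap\CDes(w^{-1})\right|$, the remaining vertices are $v^{k}(w) = v^{k+1}(w) + \tfrac{1}{2}e_{w_k}$, and $A(w) = \Conv\{v^1(w),\dots,v^{n+1}(w)\}$, a translate of $w(A_\circ)$ by an integer vector. Well-definedness is then automatic; the containment $A(w)\subseteq \Delta_{C_n,\cdes(w^{-1})}$ is verified directly on the vertices (so no inductive label-tracking is needed); injectivity plus the cardinality count gives bijectivity; and both implications $u\rightharpoonup w \Leftrightarrow A(u)\lessdot A(w)$ follow from a three-case computation showing that the unique moving vertex shifts by one half of a positive, non-simple root. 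Note also that the claim that $\rightharpoonup$ is the cover relation of a partial order then comes for free from the isomorphism with the known poset $\mathcal{I}_{C_n}$, with no separate rank-function argument. To salvage your recursive approach you would need exactly such a chain-independent formula for the alcove assigned to $w$ --- at which point you will have reconstructed the paper's proof.
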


We prove \cref{t:poset} in \cref{ss:isom}. In the following section, we use this result to prove \cref{t:cdes-basc}.

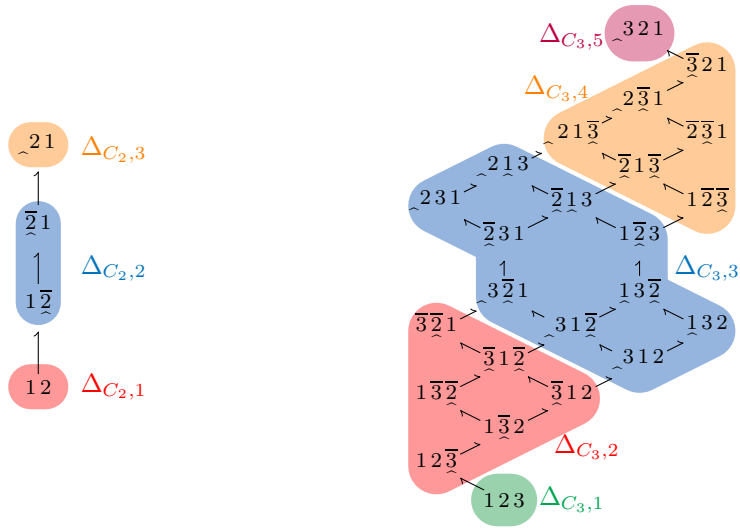
\begin{figure}[ht]
\[
\begin{gathered}\begin{tikzpicture}[xscale = 1, yscale = .7]
\node[inner sep = 2pt] (A) at (0, 0) {\scriptsize $1 \, 2$};
\node[red] at (1,0) {\small $\Delta_{C_2,1}$};
\node[inner sep = 2pt] (B) at (0,1.5) {\scriptsize $1 \, \ds{\ol{2}}$};
\node[inner sep = 2pt] (C) at (0, 3) {\scriptsize $\ds{\ol{2}} \, 1$};
\node[RoyalBlue] at (1,2.25) {\small $\Delta_{C_2,2}$};
\node[inner sep = 2pt] (D) at (0,4.5) {\scriptsize $\ds{\,}\, 2\, 1$};
\node[orange] at (1,4.5) {\small $\Delta_{C_2,3}$};
\draw[-left to] (A) -- (B);
\draw[-left to] (B) -- (C);
\draw[-left to] (C) -- (D);
\begin{scope}[on background layer , shift = {(0,.1)}]
\path[fill=red!40,draw=red!40,line width=.6cm, line cap=round, line join=round,shift={(0,-.1)}]
(-.1,0) -- (.1,0);
\path[fill=RoyalBlue!40,draw=RoyalBlue!40,line width=.6cm, line cap=round, line join=round]
(0,1.5) -- (0,3);
\path[fill=orange!40,draw=orange!40,line width=.6cm, line cap=round, line join=round]
(-.1,4.5) -- (.1,4.5);
\end{scope}
\end{tikzpicture}\end{gathered}
\hspace*{.2\linewidth}
\begin{gathered}\begin{tikzpicture}[xscale = .9, yscale = .45]
\node[inner sep = 1pt] (123) at ( 0, 0) {\scriptsize $1\, 2\, 3$};
\node[Green] at (1,0) {\small $\Delta_{C_3,1}$};
\node[inner sep = 1pt] (n123) at (-1, 1) {\scriptsize $1\, 2\, \ds{\ol{3}}$};
\node[inner sep = 1pt] (2n13) at ( 0, 2) {\scriptsize $1\, \ds{\ol{3}}\, 2$};
\node[red,shift={(-.7,-.2)}] at (2,2) {\small $\Delta_{C_3,2}$};
\node[inner sep = 1pt] (n2n13) at (-1, 3) {\scriptsize $1\, \ol{3}\, \ds{\ol{2}}$};
\node[inner sep = 1pt] (23n1) at ( 1, 3) {\scriptsize $\ds{\ol{3}}\, 1\, 2$};
\node[inner sep = 1pt] (n23n1) at ( 0, 4) {\scriptsize $\ds{\ol{3}}\, 1\, \ds{\ol{2}}$};
\node[inner sep = 1pt] (231) at ( 2, 4) {\scriptsize $\ds{\,}\,3\, 1\, 2$};
\node[RoyalBlue] at (3,6.85) {\small $\Delta_{C_3,3}$};
\node[inner sep = 1pt] (3n2n1) at (-1, 5) {\scriptsize $\ol{3}\, \ds{\ol{2}}\, 1$};
\node[inner sep = 1pt] (n231) at ( 1, 5) {\scriptsize $\ds{\,}\,3\, 1\, \ds{\ol{2}}$};
\node[inner sep = 1pt] (213) at ( 3, 5) {\scriptsize $\ds{1}\, 3\, 2$};
\node[inner sep = 1pt] (3n21) at ( 0, 6) {\scriptsize $\ds{\,}\, 3\, \ds{\ol{2}}\, 1$};
\node[inner sep = 1pt] (n213) at ( 2, 6) {\scriptsize $\ds{1}\, 3\, \ds{\ol{2}}$};
\node[inner sep = 1pt] (31n2) at ( 0, 7.7) {\scriptsize $\ds{\ol{2}}\, 3\, 1$};
\node[inner sep = 1pt] (1n23) at ( 2, 7.7) {\scriptsize $1\, \ds{\ol{2}}\, 3$};
\node[inner sep = 1pt] (312) at (-1, 8.7) {\scriptsize $\ds{\,}\,2\, 3\, 1$};
\node[inner sep = 1pt] (13n2) at ( 1, 8.7) {\scriptsize $\ds{\ol{2}}\, \ds{1}\, 3$};
\node[inner sep = 1pt] (n1n23) at ( 3, 8.7) {\scriptsize $1\, \ol{2}\, \ds{\ol{3}}$};
\node[orange,shift={(.7,.2)}] at (0,11.7) {\small $\Delta_{C_3,4}$};
\node[inner sep = 1pt] (132) at ( 0, 9.7) {\scriptsize $\ds{\,}\, 2\, \ds{1}\, 3$};
\node[inner sep = 1pt] (n13n2) at ( 2, 9.7) {\scriptsize $\ds{\ol{2}}\, 1\, \ds{\ol{3}}$};
\node[inner sep = 1pt] (n132) at ( 1,10.7) {\scriptsize $\ds{\;}\;2\, 1\, \ds{\ol{3}}$};
\node[inner sep = 1pt] (3n1n2) at ( 3,10.7) {\scriptsize $\ol{2}\, \ds{\ol{3}}\, 1$};
\node[inner sep = 1pt] (3n12) at ( 2,11.7) {\scriptsize $\ds{\,}\, 2\, \ds{\ol{3}}\, 1$};
\node[inner sep = 1pt] (32n1) at ( 3,12.7) {\scriptsize $\ds{\ol{3}}\, 2\, 1$};
\node[inner sep = 1pt] (321) at ( 2,13.7) {\scriptsize $\ds{\,}\,3\, 2\, 1$};
\node[purple] at (1,13.7) {\small $\Delta_{C_3,5}$};

\draw[-left to] (123) -- (n123);
\draw[-left to] (n123) -- (2n13);
\draw[-left to] (2n13) -- (n2n13);
\draw[-left to] (2n13) -- (23n1);
\draw[-left to] (n2n13) -- (n23n1);
\draw[-left to] (23n1) -- (n23n1);
\draw[-left to] (23n1) -- (231);
\draw[-left to] (n23n1) -- (3n2n1);
\draw[-left to] (n23n1) -- (n231);
\draw[-left to] (231) -- (n231);
\draw[-left to] (231) -- (213);
\draw[-left to] (3n2n1) -- (3n21);
\draw[-left to] (213) -- (n213);
\draw[-left to] (n231) -- (3n21);
\draw[-left to] (n231) -- (n213);
\draw[-left to] (3n21) -- (31n2);
\draw[-left to] (n213) -- (1n23);
\draw[-left to] (31n2) -- (312);
\draw[-left to] (31n2) -- (13n2);
\draw[-left to] (1n23) -- (13n2);
\draw[-left to] (1n23) -- (n1n23);
\draw[-left to] (312) -- (132);
\draw[-left to] (13n2) -- (132);
\draw[-left to] (13n2) -- (n13n2);
\draw[-left to] (n1n23) -- (n13n2);
\draw[-left to] (132) -- (n132);
\draw[-left to] (n13n2) -- (n132);
\draw[-left to] (n13n2) -- (3n1n2);
\draw[-left to] (n132) -- (3n12);
\draw[-left to] (3n1n2) -- (3n12);
\draw[-left to] (3n12) -- (32n1);
\draw[-left to] (32n1) -- (321);
\begin{scope}[on background layer]
\path[fill=Green!40,draw=Green!40,line width=.7cm, line cap=round, line join=round]
(-.1,0) -- (.1,0);
\path[fill=red!40,draw=red!40,line width=.75cm, line cap=round, line join=round]
(-1,1) -- (1,3) -- (-1,5) -- cycle;
\path[fill=RoyalBlue!40,draw=RoyalBlue!40,line width=.75cm, line cap=round, line join=round]
(2,4) -- (0,6) -- (0,7.7) -- (-1,8.7) -- (0,9.7) -- (2,7.7) -- (2,6) -- (3,5) -- cycle ;
\path[fill=orange!40,draw=orange!40,line width=.75cm, line cap=round, line join=round]
(3,12.7) -- (1,10.7) -- (3,8.7) -- cycle;
\path[fill=purple!40,draw=purple!40,line width=.75cm, line cap=round, line join=round]
(1.9,13.8) -- (2.1,13.8);
\end{scope}
\end{tikzpicture}\end{gathered}
\]
\caption{
The poset $(X_n , \leq)$ for $n=2$ (left) and $n=3$ (right) with the big ascent positions $i \in \{\ol{1}\} \cup [n]$
underlined with a little wedge ``$\wh{\phantom{a}}$''.
Elements $w \in X_n$ are grouped by the value of $\cdes(w^{-1})$.
}
\label{f:PosetSign}
\end{figure}

\subsection{Proof of Theorem \ref{t:cdes-basc}}
\label{ss:proof2}

Recall that we want to prove that for all $n \geq 1$ and $k \geq 1$,
\[
h_{\Delta'_{C_n,k}}^*(t) = \sum_{ \substack{ w \in X_n \\ \cdes(w^{-1}) = k } } t^{\basc(w)}.
\]
To achieve this, we use the following result of Stanley regarding the $h^*$-polynomial of polytopes with a \emph{shellable unimodular triangulation}.

Let $P$ be a polytope with a unimodular triangulation $\Gamma$. Suppose there exists a linear ordering\footnote{Such an ordering is called a \defn{shelling} of $\Gamma$, and it does not necessarily exist.} $T_1 \prec T_2 \prec \dots $ of the maximal simplices in $\Gamma$ such that the intersection of $T_k$ with $T_1 \cup \dots \cup T_{k-1}$ is a union of facets of $T_k$. Let $c(T_k)$ be the number of facets of $T_k$ in this intersection. Then, a result of Stanley \cite{S80decomp} shows that
\begin{equation}\label{eq:Stanley-shelling}
h^*_P(t) = \sum_{i} t^{c(T_i)}.
\end{equation}

\begin{proof}[{Proof of \cref{t:cdes-basc}}]
For $n \geq 1$ and $1 \leq k \leq 2n-1$, let $P_{n,k} = \{ x \in \Pi_{C_n} \mid 2 x_1 \leq k \}$.
The polytope $P_{n,k}$ is the disjoint union of $\Delta'_{C_n,1},\dots,\Delta'_{C_n,k}$, and so
$L_{P_{n,k}}(r) = L_{\Delta'_{C_n,1}}(r) + \dots + L_{\Delta'_{C_n,k}}(r)$.
Moreover, since all these polytopes have the same dimension,
\[h^*_{P_{n,k}}(t) = h^*_{\Delta'_{C_n,1}}(t) + \dots + h^*_{\Delta'_{C_n,k}}(t).\]

The collection $\mathcal{P}_{n,k}$ of alcoves contained in $P_{n,k}$ forms an order ideal of $\mathcal{I}_{C_n}$, since the hyperplane $H_{\theta,k}$ separates the fundamental alcove $A_\circ$ from every alcove of $\mathcal{I}_{C_n}$ that is not in $\mathcal{P}_{n,k}$, and no others.
Fix any linear extension $\prec$ of $\mathcal{I}_{C_n}$ such that alcoves in $\mathcal{P}_{n,k}$ appear before those in $\mathcal{P}_{n,k+1} \setminus \mathcal{P}_{n,k}$.
Since every linear extension of the weak order on a hyperplane arrangement
\footnote{We can restrict to a finite linear arrangement by forgetting
the hyperplanes not intersecting $\Pi_{C_n}$ and taking the cone over them.}
gives a shelling of the complex of regions (see for instance \cite[Proposition 3.4]{reading05fans}),
$\prec$ defines a shelling on the alcove triangulation of each $P_{n,k}$.
Moreover, the number of facets $c(A)$ of an alcove $A$ that appear in the intersection with the preceding alcoves is precisely the number of elements covered by $A$ in the weak order.
Therefore, by \cref{t:poset} and Stanley's formula \cref{eq:Stanley-shelling}, we have
\begin{equation}\label{eq:h-star-Pnk}
h^*_{P_{n,k}}(t) =
\sum_{ \substack{ w \in X_n : \\ \cdes(w^{-1}) \leq k } } t^{\basc(w)},
\end{equation}
and therefore
\[
h^*_{\Delta'_{C_n,k}}(t) = h^*_{P_{n,k}}(t) - h^*_{P_{n,k-1}}(t) = \sum_{ \substack{ w \in X_n : \\ \cdes(w^{-1}) = k } } t^{\basc(w)}. \qedhere
\]
\end{proof}

Taking $k = 2n-1$ in \Cref{eq:h-star-Pnk}, we obtain the following result.

\begin{corollary}
\label{cor:h*-parallel}
The $h^*$-polynomial of the type C fundamental parallelepiped is
\[
h^*_{\Pi_{C_n}}(t) = \sum_{ w \in X_n } t^{\basc(w)} = \Psi_{C_n}(t),
\]
where $\Psi_{C_n}(t)$ is the polynomial defined in \Cref{eq:def-Psi}.
\end{corollary}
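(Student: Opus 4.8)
The plan is to obtain this corollary as the immediate $k = 2n-1$ specialization of the identity \eqref{eq:h-star-Pnk}, which was already established inside the proof of \cref{t:cdes-basc}; essentially no new work is required. The two facts to check are that $P_{n,2n-1}$ equals the whole fundamental parallelepiped $\Pi_{C_n}$, and that the condition $\cdes(w^{-1}) \leq 2n-1$ appearing in \eqref{eq:h-star-Pnk} is vacuous on $X_n$.

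First I would record that $P_{n,2n-1} = \Pi_{C_n}$. The defining functional of the hypersimplex slices is $\langle x, \theta \rangle = 2x_n$, and on $\Pi_{C_n}$ we may write $2x_n = 2x_1 + \sum_{i=1}^{n-1} 2(x_{i+1} - x_i)$, a sum of one term in $[0,1]$ and $n-1$ terms in $[0,2]$. Hence $2x_n \in [0, 2n-1]$ for every $x \in \Pi_{C_n}$, so the inequality $2x_n \leq 2n-1$ cutting out $P_{n,2n-1}$ is automatically satisfied and $P_{n,2n-1} = \Pi_{C_n}$. Since $h(\Phi_{C_n}) = 2n$, the hypersimplices $\Delta_{C_n,k}$ exist only for $1 \leq k \leq 2n-1$; by \cref{t:poset} each $A(w)$ lies in $\Delta_{C_n,\cdes(w^{-1})}$, so $1 \leq \cdes(w^{-1}) \leq 2n-1$ for all $w \in X_n$. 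Specializing \eqref{eq:h-star-Pnk} at $k = 2n-1$ therefore gives
\[
h^*_{\Pi_{C_n}}(t) = h^*_{P_{n,2n-1}}(t) = \sum_{w \in X_n} t^{\basc(w)}.
\]

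For the second equality I would invoke the isomorphism $A \colon X_n \to \mathcal{I}_{C_n}$ from \cref{t:poset}, which is an order isomorphism, together with the observation in \cref{def:cover-rel} that each $w \in X_n$ has exactly $\basc(w)$ lower covers in $(X_n, \leq)$. These lower covers are carried by $A$ to the lower covers of $A(w)$ in the weak order, so $\basc(w) = {\rm cover}(A(w))$ for all $w$. Reindexing the sum along $A$ and comparing with the definition \eqref{eq:def-Psi} of $\Psi_\Phi(t)$ yields $\sum_{w \in X_n} t^{\basc(w)} = \Psi_{C_n}(t)$. I do not expect any genuine obstacle: all the substance lives in \eqref{eq:h-star-Pnk} and \cref{t:poset}, and the remaining steps are the two bookkeeping verifications that $2x_n \leq 2n-1$ on $\Pi_{C_n}$ and that $\cdes(w^{-1})$ never exceeds $2n-1$.
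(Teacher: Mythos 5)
Your proof is correct and takes essentially the same route as the paper, whose entire argument is the sentence ``Taking $k = 2n-1$ in \Cref{eq:h-star-Pnk}, we obtain the following result.'' The two bookkeeping checks you make explicit---that $P_{n,2n-1} = \Pi_{C_n}$ because $2x_n = 2x_1 + \sum_{i=1}^{n-1} 2(x_{i+1}-x_i) \leq 2n-1$ on $\Pi_{C_n}$, and that $\cdes(w^{-1}) \leq 2n-1$ holds vacuously, with $\basc(w)$ counting lower covers under the isomorphism $A$ of \cref{t:poset} so that the sum is $\Psi_{C_n}(t)$ by \Cref{eq:def-Psi}---are exactly what the paper leaves implicit.
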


\begin{example}
We can read from \cref{f:PosetSign} that
\[
h^*_{\Delta'_{C_2,1}}(t) = 1,\qquad
h^*_{\Delta'_{C_2,2}}(t) = 2t,\qquad
h^*_{\Delta'_{C_2,3}}(t) = t; \qquad \text{and}
\]
\[
h^*_{\Delta'_{C_3,1}}(t) = 1, \quad
h^*_{\Delta'_{C_3,2}}(t) = 5t + t^2, \quad
h^*_{\Delta'_{C_3,3}}(t) = 5t + 5t^2,\quad
h^*_{\Delta'_{C_3,4}}(t) = 3t + 3t^2,\quad
h^*_{\Delta'_{C_3,5}}(t) = t.
\]
Consequently,
\[
h^*_{\Pi_{C_2}}(t) = 1 + 3t,
\qquad\text{and}\qquad
h^*_{\Pi_{C_3}}(t) = 1 + 14t + 9t^2.
\]
More values of $h^*_{\Delta'_{C_n,k}}(t)$ and $h^*_{\Pi_{C_n}}(t)$ can be found in \cref{tab:h-half-open}.
\end{example}

\afterpage{
\clearpage
\newgeometry{left=1.5in,right=1.5in,bottom=.1in,top=.7in}
\begin{landscape}

\begin{table}
{
\centering
\scriptsize

\vfill

\begin{tabular}{c|lllllll}
$k \backslash n$ & $1$ & $2$ & $3$ & $4$ & $5$ & $6$ & $7$ \\
\hline
$1$ & $1$ & $1$ & $1$ & $1$ & $1$ & $1$ & $1$ \\
$2$ & & $2t$ & $5t + t^2$ & $9t + 5t^2$ & $14t + 15t^2 + t^3$ & $20t + 35t^2 + 7t^3$ & $27t + 70t^2 + 28t^3 + t^4$ \\
$3$ & & $t$ & $5t + 5t^2$ & $13t + 30t^2 + 4t^3$ & $26t + 106t^2 + 43t^3 + t^4$ & $45t + 287t^2 + 238t^3 + 27t^4$ & $71t + 658t^2 + 932t^3 + 257t^4 + 8t^5$ \\
$4$ & & & $3t + 3t^2$ & $13t + 42t^2 + 13t^3$ & $35t + 223t^2 + 178t^3 + 14t^4$ & $75t + 796t^2 + 1211t^3 + 304t^4 + 6t^5$ & $140t + 2261t^2 + 5689t^3 + 3013t^4 + 278t^5 + t^6$ \\
$5$ & & & $t$ & $9t + 29t^2 + 9t^3$ & $35t + 268t^2 + 268t^3 + 35t^4$ & $96t + 1334t^2 + 2681t^3 + 1006t^4 + 45t^5$ & $216t + 4838t^2 + 16481t^3 + 12485t^4 + 1920t^5 + 27t^6$ \\
$6$ & & & & $4t + 9t^2 + t^3$ & $26t + 199t^2 + 199t^3 + 26t^4$ & $96t + 1505t^2 + 3410t^3 + 1505t^4 + 96t^5$ & $267t + 7245t^2 + 29221t^3 + 26813t^4 + 5446t^5 + 140t^6$ \\
$7$ & & & & $t$ & $14t + 88t^2 + 69t^3 + 5t^4$ & $75t + 1175t^2 + 2662t^3 + 1175t^4 + 75t^5$ & $267t + 7930t^2 + 34549t^3 + 34549t^4 + 7930t^5 + 267t^6$ \\
$8$ & & & & & $5t + 19t^2 + 6t^3$ & $45t + 622t^2 + 1243t^3 + 462t^4 + 20t^5$ & $216t + 6413t^2 + 27937t^3 + 27937t^4 + 6413t^5 + 216t^6$ \\
$9$ & & & & & $t$ & $20t + 206t^2 + 301t^3 + 69t^4 + t^5$ & $140t + 3786t^2 + 15229t^3 + 13928t^4 + 2813t^5 + 71t^6$ \\
$10$ & & & & & & $6t + 34t^2 + 21t^3 + t^4$ & $71t + 1568t^2 + 5261t^3 + 3900t^4 + 575t^5 + 7t^6$ \\
$11$ & & & & & & $t$ & $27t + 415t^2 + 981t^3 + 468t^4 + 35t^5$ \\
$12$ & & & & & & & $7t + 55t^2 + 56t^3 + 8t^4$ \\
$13$ & & & & & & & $t$ \\
\hline
Sum & $1$ & $1 + 3t$ & $1 + 14t + 9t^2$ & $1 + 49t + 115t^2 + 27t^3$ & $1 + 156t + 918t^2 + 764t^3 + 81t^4$ & $1 + 479t + 5994t^2 + 11774t^3 + 4549t^4 + 243t^5$ & $1 + 1450t + 35239t^2 + 136364t^3 + 123359t^4 + 25418t^5 + 729t^6$
\end{tabular}
\caption{$h^*$-polynomial of the half-open hypersimplcies $\Delta'_{C_n,k}$ and of the type C fundamental parallelepiped $\Pi_{C_n}$ (last row).}
\label{tab:h-half-open}

\vspace*{.95in}

\begin{tabular}{c|lllllll}
$k \backslash n$ & $1$ & $2$ & $3$ & $4$ & $5$ & $6$ & $7$ \\
\hline
$1$ & $1$ & $1$ & $1$ & $1$ & $1$ & $1$ & $1$ \\
$2$ & & $1 + t$ & $1 + 4t + t^2$ & $1 + 8t + 5t^2$ & $1 + 13t + 15t^2 + t^3$ & $1 + 19t + 35t^2 + 7t^3$ & $1 + 26t + 70t^2 + 28t^3 + t^4$ \\
$3$ & & $1$ & $1 + 6t + 3t^2$ & $1 + 17t + 26t^2 + 3t^3$ & $1 + 34t + 102t^2 + 38t^3 + t^4$ & $1 + 58t + 288t^2 + 224t^3 + 26t^4$ & $1 + 90t + 673t^2 + 904t^3 + 250t^4 + 8t^5$ \\
$4$ & & & $1 + 4t + t^2$ & $1 + 21t + 39t^2 + 7t^3$ & $1 + 55t + 237t^2 + 147t^3 + 10t^4$ & $1 + 113t + 878t^2 + 1134t^3 + 261t^4 + 5t^5$ & $1 + 203t + 2519t^2 + 5612t^3 + 2795t^4 + 251t^5 + t^6$ \\
$5$ & & & $1$ & $1 + 17t + 26t^2 + 3t^3$ & $1 + 64t + 306t^2 + 216t^3 + 19t^4$ & $1 + 164t + 1590t^2 + 2572t^3 + 805t^4 + 30t^5$ & $1 + 348t + 5789t^2 + 16832t^3 + 11380t^4 + 1596t^5 + 21t^6$ \\
$6$ & & & & $1 + 8t + 5t^2$ & $1 + 55t + 237t^2 + 147t^3 + 10t^4$ & $1 + 185t + 1920t^2 + 3320t^3 + 1135t^4 + 51t^5$ & $1 + 475t + 9248t^2 + 30824t^3 + 24265t^4 + 4229t^5 + 90t^6$ \\
$7$ & & & & $1$ & $1 + 34t + 102t^2 + 38t^3 + t^4$ & $1 + 164t + 1590t^2 + 2572t^3 + 805t^4 + 30t^5$ & $1 + 526t + 10765t^2 + 37436t^3 + 30877t^4 + 5746t^5 + 141t^6$ \\
$8$ & & & & & $1 + 13t + 15t^2 + t^3$ & $1 + 113t + 878t^2 + 1134t^3 + 261t^4 + 5t^5$ & $1 + 475t + 9248t^2 + 30824t^3 + 24265t^4 + 4229t^5 + 90t^6$ \\
$9$ & & & & & $1$ & $1 + 58t + 288t^2 + 224t^3 + 26t^4$ & $1 + 348t + 5789t^2 + 16832t^3 + 11380t^4 + 1596t^5 + 21t^6$ \\
$10$ & & & & & & $1 + 19t + 35t^2 + 7t^3$ & $1 + 203t + 2519t^2 + 5612t^3 + 2795t^4 + 251t^5 + t^6$ \\
$11$ & & & & & & $1$ & $1 + 90t + 673t^2 + 904t^3 + 250t^4 + 8t^5$ \\
$12$ & & & & & & & $1 + 26t + 70t^2 + 28t^3 + t^4$ \\
$13$ & & & & & & & $1$ \\
\end{tabular}
\caption{$h^*$-polynomial of the (closed) hypersimplcies $\Delta_{C_n,k}$.}
\label{tab:h-closed}

\vfill
}
\end{table}

\end{landscape}
\aftergroup
\restoregeometry
\clearpage
\restoregeometry
}

\subsection{Type C circular descents (revisited)}

In this section, we refine the circular descent statistic of Lam and Postnikov (\cref{def:LP-cdes}) according to our choice of simple roots for the root system of type $C_n$.

\begin{definition}
We say that a signed permutation $w \in \hypoct_n$ has a \defn{circular descent} at position $i \in \dbra{n}$ if $w_i > w_{i^+}$. Let $\defnn{\CDes(w)}$ denote the set of circular descents of $w$.
\end{definition}

Circular descents are easy to read using the complete notation of $w \in \hypoct_n$. Namely, a position $i \in \dbra{n}$ is a circular descent of $w$ if, in its complete notation $\wt{w}$, the letter $w_{i}$ is (cyclically) followed by a smaller letter.

\begin{example}\label{ex:cdes}
Consider the signed permutation $u \in \hypoct_n$ with complete notation
\[
\wt{u} = \ol{5} \, \ol{1} \, \underset{\cdessymb}{4} \, \underset{\cdessymb}{2} \, \ol{3} \, \underset{\cdessymb}{3} \, \underset{\cdessymb}{\ol{2}} \, \ol{4} \, 1 \, \underset{\cdessymb}{5}.
\]
The marked positions show its circular descents.
That is, $\CDes(u) = \{\ol{3},\ol{2},1,2,5 \}$ and $\cdes(u) = 5$.
\end{example}

The next result follows by a direct comparison between the definition of the statistic $\cdes$ and of the set $\CDes$.
For completeness, we include its proof.

\begin{lemma}
For all $ w \in \hypoct_n$, \[ \cdes(w) = |\CDes(w)|. \]
\end{lemma}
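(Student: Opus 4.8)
The plan is to unwind both sides position by position, matching the root-theoretic quantities $d_i(w)$ to the combinatorial circular descents determined by the entries $w_i = w(i)$. Recall that for type $C_n$ one has $a_1 = 1$, $a_2 = \dots = a_n = 2$, and $\alpha_0 = -\theta = -2e_n$, so that $\cdes(w) = d_0(w) + d_1(w) + 2\big(d_2(w) + \dots + d_n(w)\big)$. The bridge between the two descriptions is the following elementary characterization of positive roots: for $a,b \in \dbra{n}$, with the convention $e_{\ol j} = -e_j$, the vector $e_a - e_b$ is a \emph{positive} root of $\Phi_{C_n}$ if and only if $a > b$ in the total order $\ol n < \dots < \ol 1 < 1 < \dots < n$. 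I would prove this by a short check of the four cases according to the signs of $a$ and $b$; the same check automatically handles the long roots $\pm 2e_i$ that arise when $a = \ol b$.

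Granting this, I would compute each $d_i(w)$ directly. For $2 \le i \le n$ we have $w(\alpha_i) = e_{w_i} - e_{w_{i-1}}$, so $d_i(w) = 1$ if and only if $w_{i-1} > w_i$, which is exactly the condition that $i-1$ be a circular descent since $(i-1)^+ = i$. For $\alpha_1 = 2e_1$ we have $w(\alpha_1) = 2e_{w_1}$, giving $d_1(w) = 1 \iff w_1 < 0$; and since $(\ol 1)^+ = 1$ and $w_{\ol 1} = -w_1$, this is precisely the condition $\ol 1 \in \CDes(w)$. Finally $w(\alpha_0) = -2e_{w_n}$ gives $d_0(w) = 1 \iff w_n > 0$, which (using $n^+ = \ol n$ and $w_{\ol n} = -w_n$) is the condition $n \in \CDes(w)$. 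This matches the $n+1$ positions $\ol 1, 1, 2, \dots, n$ to $d_1, d_2, \dots, d_n, d_0$.

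It remains to account for the $n-1$ positions $\ol 2, \dots, \ol n$, and this is exactly where the weights $a_i = 2$ enter. Using $w_{\ol i} = -w_i$ together with $(\ol i)^+ = \ol{i-1}$ for $2 \le i \le n$, position $\ol i$ is a circular descent if and only if $-w_i > -w_{i-1}$, i.e. $w_{i-1} > w_i$ --- the same inequality governing position $i-1$. Hence $\ol i \in \CDes(w) \iff i-1 \in \CDes(w) \iff d_i(w) = 1$. Summing over all $2n$ positions of $\dbra{n}$, the blocks $\{1,\dots,n-1\}$ and $\{\ol 2,\dots,\ol n\}$ each contribute $d_2(w) + \dots + d_n(w)$, while $\ol 1$ and $n$ contribute $d_1(w)$ and $d_0(w)$, so that
\[
|\CDes(w)| = d_0(w) + d_1(w) + 2\big(d_2(w) + \dots + d_n(w)\big) = \cdes(w).
\]

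The only real subtlety is the bookkeeping at the two wrap-around positions $\ol 1$ and $n$, where the cyclic successor crosses between the negative and positive halves of $\dbra{n}$ and the sign relation $w_{\ol i} = -w_i$ must be invoked; the remaining point is simply the observation that the negation symmetry makes every interior descent appear twice --- once among $1,\dots,n-1$ and once among $\ol 2,\dots,\ol n$ --- which is precisely what reproduces the coefficients $a_i = 2$. Once the positive-root characterization is in place, no step is genuinely hard.
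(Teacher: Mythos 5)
Your proof is correct and follows essentially the same route as the paper's: compute each $d_i(w)$ by checking when $w(\alpha_i)$ is a negative root, match $d_0,d_1,d_i$ to circular descents at positions $n,\ol{1},i-1$ respectively, and then use the negation symmetry $\ol{i}\in\CDes(w)\iff i-1\in\CDes(w)$ to show each interior descent is counted twice, reproducing the weights $a_i=2$. The only difference is presentational: you isolate the positive-root criterion ($e_a-e_b\in\Phi^+\iff a>b$ in the order on $\dbra{n}$) as an explicit preliminary step, which the paper leaves implicit.
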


\begin{proof}
Recall that
$
\cdes(w) = d_0(w) + d_1(w) + 2 d_2(w) + \dots + 2 d_n(w),
$
where
\begin{itemize}
\item $d_0(w)=1 \Leftrightarrow
w(- 2 e_n) \in \Phi^- \Leftrightarrow
w_n > 0 \Leftrightarrow
w_n > w_{\ol{n}} \Leftrightarrow
n \in \CDes(w)$;
\item $d_1(w)=1 \Leftrightarrow
w(2 e_1) \in \Phi^- \Leftrightarrow
w_1 < 0 \Leftrightarrow
w_1 < w_{\ol{1}} \Leftrightarrow
\ol{1} \in \CDes(w)$;
\item for $2 \leq i \leq n$,
$d_i(w) = 1 \Leftrightarrow
w(e_i - e_{i-1}) \in \Phi^- \Leftrightarrow
w_{i-1} > w_i \Leftrightarrow
i-1 \in \CDes(w)$.
\end{itemize}
Since for $i \in [n-1]$, $i \in \CDes(w)$ if and only if $\ol{i+1} \in \CDes(w)$, this proves the claimed equality.
\end{proof}

\begin{remark}
The statistic $\cdes$ depends on the choice of simple system for $\Phi$.
We warn the reader that for the other usual choice of simple roots for the type C root system, the description of $\cdes$ in \cite[Section 12]{LP18alcoved2} contains a minor typo.
Indeed, for $i \in [n-1]$, $w$ has a descent (in the Lam-Postnikov convention) if either $w_i > w_{i+1}$ and both have the same sign, or if $w_i < 0 < w_{i+1}$.
\end{remark}

In what follows, it will be important to have an explicit description of the circular descents of the inverse of a signed permutation $w$. The complete notation of $w$ will again play an important role.

\begin{lemma}\label{l:alt_CDes}
For $w \in \hypoct_n$ and $i \in \dbra{n}$,
\[
i \in \CDes(w^{-1})
\quad\text{if and only if}\quad
i^+ \text{ precedes } i \text{ in } \wt{w}.
\]
\end{lemma}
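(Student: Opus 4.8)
The plan is to unwind both sides of the claimed equivalence into a single inequality between $w^{-1}(i)$ and $w^{-1}(i^+)$ in the total order on $\dbra{n}$, and then observe that the two conditions literally coincide. This is a direct computation from the definitions of $\CDes$ and of the complete notation; no auxiliary construction is needed.

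First I would translate the left-hand side. Since $w^{-1} \in \hypoct_n$, applying the definition of a circular descent to $w^{-1}$ gives that $i \in \CDes(w^{-1})$ exactly when $(w^{-1})_i > (w^{-1})_{i^+}$, that is, when $w^{-1}(i) > w^{-1}(i^+)$. Next I would translate the right-hand side. The key observation is that the complete notation $\wt{w} = w_{\ol{n}} \cdots w_{\ol{1}} w_1 \cdots w_n$ lists its positions in the total order $\ol{n} < \dots < \ol{1} < 1 < \dots < n$ on $\dbra{n}$, and the letter occupying position $j$ is the value $w(j)$. Because $w$ is a bijection of $\dbra{n}$, the value $i$ appears at the unique position $w^{-1}(i)$, and the value $i^+$ at position $w^{-1}(i^+)$. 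Hence the phrase ``$i^+$ precedes $i$ in $\wt{w}$'' means precisely that the position $w^{-1}(i^+)$ is to the left of the position $w^{-1}(i)$, i.e. $w^{-1}(i^+) < w^{-1}(i)$.

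Comparing the two translations, both ``$i \in \CDes(w^{-1})$'' and ``$i^+$ precedes $i$ in $\wt{w}$'' are equivalent to the single inequality $w^{-1}(i^+) < w^{-1}(i)$, which finishes the proof. There is no genuine obstacle; the only point requiring care is the bookkeeping distinguishing \emph{positions} from \emph{values}: the positions of $\wt{w}$ are indexed by $\dbra{n}$ read left to right according to its total order (so ``precedes'' is exactly the relation $<$ on positions), and the value $i$ sits at position $w^{-1}(i)$ rather than at position $i$. Once this is pinned down, the statement reduces to a one-line comparison of inequalities.
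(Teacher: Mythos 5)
Your proof is correct: both sides of the equivalence unwind to the single inequality $w^{-1}(i^+) < w^{-1}(i)$, since the positions of $\wt{w}$ read left to right are exactly $\dbra{n}$ in increasing order and the value $i$ occupies position $w^{-1}(i)$. This is precisely the direct verification the paper leaves implicit (it states the lemma without proof as an immediate consequence of the definitions), and your argument handles all cases, including the wrap-around case $i=n$, uniformly.
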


Observe that $\ol{1}$ is a circular descent of $w^{-1}$ if and only if $w^{-1}(1) < 0$.
Therefore, $X_n$ is exactly the set of signed permutations $w$ for which $\ol{1} \notin \CDes(w^{-1})$.

\begin{example}\label{ex:cdes-inverse}
Consider the signed permutation $w = 4 \, \ol{2} \, 1 \, \ol{3} \, 5 \in X_5$. It is the inverse of the permutation in \cref{ex:cdes}, so $\CDes(w^{-1}) = \{\ol{3},\ol{2},1,2,5 \}$. The complete notation of $w$ is
\[
\wt{w} = \ol{5} \, 3 \, \ol{1} \, 2 \, \ol{4} \, 4 \, \ol{2} \, 1 \, \ol{3} \, 5.
\]
We see, for instance, that $2 \in \CDes(w^{-1})$ because $3 = 2^+$ precedes $2$ in $\wt{w}$.
Similarly, $\ol{2} \in \CDes(w^{-1})$ since $\ol{1} = \ol{2}^+$ appears before $\ol{2}$;
and $5 \in \CDes(w^{-1})$ since $\ol{5} = 5^+$ appears before $5$ in $\wt{w}$.
\end{example}

\subsection{Poset isomorphism}
\label{ss:isom}

In this section we construct an isomorphism $A: X_n \to \mathcal{I}_{C_n}$ by explicitly constructing the vertices of the alcove $A(w)$ for each signed permutation $w \in X_n$; thereby proving \cref{t:poset}.

Given $w \in X_n$, let $v^{n+1}(w) \in \ZZ^n \subset \RR^n$ be the point with coordinates
\begin{equation}\label{eq:def-trans-vector}
\defnn{v^{n+1}_i(w)} :=
\left| [i-1] \cap \CDes(w^{-1}) \right|, \quad\text{ for } i = 1,2,\dots,n.
\end{equation}
Thus, $v^{n+1}_1(w) = 0$. For $k \in [n]$, let $v^k(w) \in \tfrac{1}{2} \ZZ^n$ be the point
\[
\defnn{v^k(w)} := v^{k+1}(w) + \tfrac{1}{2} e_{w_k} = v^{n+1}(w) + \tfrac{1}{2}\big( e_{w_k} + \dots + e_{w_n} \big).
\]

\begin{example}\label{ex:per-to-simp-I}
Let $w = 4 \, \ol{2} \, 1 \, \ol{3} \, 5 \in X_5$ be the signed permutation from \cref{ex:cdes-inverse}.
Then,
\[
[n-1] \cap \CDes(w^{-1}) = [4] \cap \CDes(w^{-1}) = \{1, 2\},
\]
and
\begin{align*}
v^6(w) & = (0, 1, 2, 2, 2 );\\
v^5(w) = v^6(w) + \tfrac{1}{2}e_5 & = (0, 1, 2, 2, \tfrac{5}{2});\\
v^4(w) = v^5(w) + \tfrac{1}{2}e_{\ol{3}} & = (0, 1, \tfrac{3}{2}, 2, \tfrac{5}{2});\\
v^3(w) = v^4(w) + \tfrac{1}{2}e_1 & = (\tfrac{1}{2}, 1, \tfrac{3}{2}, 2, \tfrac{5}{2});\\
v^2(w) = v^3(w) + \tfrac{1}{2}e_{\ol{2}} & = (\tfrac{1}{2}, \tfrac{1}{2}, \tfrac{3}{2}, 2, \tfrac{5}{2});\\
v^1(w) = v^2(w) + \tfrac{1}{2}e_4 & = (\tfrac{1}{2}, \tfrac{1}{2}, \tfrac{3}{2}, \tfrac{5}{2}, \tfrac{5}{2}).
\end{align*}
\end{example}

Since $\{e_{w_1} , \dots , e_{w_n}\}$ is a linear basis of $\RR^n$, the vectors $v^1(w),\dots,v^{n+1}(w)$ are affinely independent and their Convex hull is a full-dimensional simplex.

\begin{definition}\label{d:map-A}
Given $w \in X_n$, let $\defnn{A(w)}$ be the simplex
\[
\defnn{A(w)} := \Conv\{ v^k(w) \mid k = 1,\dots ,n+1 \}.
\]
\end{definition}

\begin{example}\label{ex:idAlc}
For the identity permutation ${\rm id}_n := 12\cdots n$, $\CDes({\rm id}_n^{-1})=\{n\}$ thus $v^{n+1}({\rm id}_n)$ is the zero vector and $v^k({\rm id}_n) = (0,\cdots , 0, \tfrac{1}{2},\cdots,\tfrac{1}{2})$ has $n-k+1$ nonzero coordinates. This is exactly the fundamental alcove; that is, $A({\rm id}_n) = A_\circ$.
\end{example}

\begin{proof}[Proof of \cref{t:poset}]
We prove that the map $w \to A(w)$ defines a poset isomorphism $A : X_n \to \mathcal{I}_{C_n}$.
To do so, we first show that $A$ is a well-defined bijection, and then we show that it is both order-preserving and order-reflecting.

The group $\wt{W}_{C_n}$ contains all the translations by integer vectors.
The simplex $A(w)$ is the translation of the alcove $w(A_\circ)$ by the integer vector $v^{n+1}(w)$ in \Cref{eq:def-trans-vector}, and is therefore an alcove of $\HH_{C_n}$.
To show that $A(w) \in \mathcal{I}_{C_n}$, in fact that $A(w)$ is contained in the hypersimplex $\Delta_{C_n,\cdes(w^{-1})}$, we verify that the vertices of $A(w)$ satisfy the inequalities in \Cref{eq:def-hypersimplex}.
Fix $w \in X_n$ and write $v^j = v^j(w)$ for all $j \in [n+1]$.
\begin{itemize}
\item
$2v_1^{n+1} = 2|\varnothing| = 0$ and, since $1$ appears in the window notation of $w$,
$v_1^j \in \{ v_1^{n+1} , v_1^{n+1} + \tfrac{1}{2} \}$ for all $j$.
Therefore,
\[
0 \leq 2v_1^j \leq 1 \text{ for all } j.
\]

\item
For $k = 2,\dots,n$, we have
$v^{n+1}_k - v^{n+1}_{k-1} = \begin{cases}
1 & \text{if } k-1 \in \CDes(w^{-1}),\\
0 & \text{otherwise.}
\end{cases}$\newpage
If $k-1 \in \CDes(w^{-1})$, then by \cref{l:alt_CDes} we have that
$k$ precedes $k-1$ and $\ol{k-1}$ precedes $\ol{k}$ in $\wt{w}$.
Thus, if $k$ (resp. $\ol{k-1}$) appears in the window notation of $w$, and therefore some $v^j$ is obtained from $v^{j+1}$ by adding $\tfrac{1}{2}e_k$ (resp. $-\tfrac{1}{2}e_{k-1}$), then $\tfrac{1}{2}e_{k-1}$ (resp. $-\tfrac{1}{2}e_k$) was added for an earlier vertex $v^{j'}$ with $j' > j$. Hence, $\tfrac{1}{2} \leq v^j_k - v^j_{k-1} \leq 1$ for all $j$.
A similar analysis shows that if $k-1 \notin \CDes(w^{-1})$, then $0 \leq v^j_k - v^j_{k-1} \leq \tfrac{1}{2}$ for all $j$.
In any case, we have
\[
0 \leq v^j_k - v^j_{k-1} \leq 1 \text{ for all } j.
\]

\item
$2 v^{n+1}_n = 2 | [n-1] \cap \CDes(w) | = \begin{cases}
\cdes(w^{-1}) - 1 & \text{if } n \in \CDes(w^{-1}),\\
\cdes(w^{-1}) & \text{otherwise.}
\end{cases}$\newline
If $n \in \CDes(w^{-1})$, then $n$ appears in the window notation of $w$ and therefore
$v_n^j \in \{ v_n^{n+1} , v_n^{n+1} + \tfrac{1}{2} \}$ for all $j$.
If, on the other hand, $n \notin \CDes(w^{-1})$, then $\ol{n}$ appears in the window notation of $w$ and therefore
$v_n^j \in \{ v_n^{n+1} , v_n^{n+1} - \tfrac{1}{2} \}$ for all $j$.
In both cases, we have
\[
\cdes(w^{-1}) - 1 \leq 2 v_n^j \leq \cdes(w^{-1}) \text{ for all } j.
\]
\end{itemize}
Thus $A(w) \subseteq \Delta_{C_n,\cdes(w^{-1})} \subseteq \Pi_{C_n}$.
Observe that $w$ can be recovered from $A(w)$ by first ordering its vertices according to their number of non-integer coordinates and then keeping track of what coordinate becomes increases/decreases from one vertex to the next.
Therefore, since both $X_n$ and $\mathcal{I}_{C_n}$ have the same cardinality (see \Cref{eq:volPi}), the map $A$ is a bijection.

We will now prove that $A(u) \lessdot A(w)$ if and only if $u \rightharpoonup w$.

First, suppose $u,w \in X_n$ are such that $A(u) \lessdot A(w)$.
Let $\alpha \in \Phi^+$ such that the common facet of $A(u)$ and $A(w)$ lies on a hyperplane of the form $H_{\alpha,p}$.
Given that the hyperplane $H_{\alpha,p}$ cuts the interior of $\Pi_{C_n}$, the root $\alpha$ is not simple.
Moreover, since the reflection of a lattice point across $H_{\alpha,p}$ does not change the number of non-integer coordinates, there is a $k \in [n+1]$ such that $v^j(u) = v^j(w)$ for $j \neq k$ and $v^k(w) = v^k(u) + \tfrac{1}{2} \alpha$.

\noindent\textbf{Case 1:}
If $k = 1$,
then $u_i = w_i$ for all $i > 1$ and necessarily $u_1 = \ol{w_1}$.
Moreover, since
\[
v^1(w) - v^1(u) = ( v^2(w) + \tfrac{1}{2}e_{w_1} ) - ( v^2(u) + \tfrac{1}{2}e_{u_1} ) = e_{w_1}
\]
is, up to scaling, a positive but not simple root, $w_1 \geq 2$.
Thus, $u \rightharpoonup w$ by the first case in \cref{def:cover-rel}.

\noindent\textbf{Case 2:}
If $k \in [2,n]$,
then $u_i = w_i$ for all $i \neq k-1,k$ and since
\[
\tfrac{1}{2}(e_{w_{k-1}} + e_{w_{k}}) = v^{k-1}(w) - v^{k+1}(w)
= v^{k-1}(u) - v^{k+1}(u)
= \tfrac{1}{2}(e_{u_{k-1}} + e_{u_{k}}),
\]
necessarily $u_{k-1} = w_k$ and $u_k = w_{k-1}$.
Moreover, since
\[
v^k(w) - v^k(u) = \tfrac{1}{2} (e_{w_k} - e_{u_k}) = \tfrac{1}{2} (e_{w_k} - e_{w_{k-1}})
\]
is, up to scaling, a positive but not simple root, $w_{k-1} \ll w_k$.
Thus, $u \rightharpoonup w$ by the second case in \cref{def:cover-rel}.

\noindent\textbf{Case 3:}
If $k = n+1$,
then $u_i = w_i$ for all $i < n$ and necessarily $u_n = \ol{w_n}$.
Moreover, since
\[
v^{n+1}(w) - v^{n+1}(u) = ( v^n(w) - \tfrac{1}{2}e_{w_n} ) - ( v^n(u) - \tfrac{1}{2}e_{u_n} ) = - e_{w_n}
\]
is, up to scaling, a positive but not simple root, $w_n \leq \ol{2}$.
Thus, $u \rightharpoonup w$ by the third case in \cref{def:cover-rel}.

Now we prove the reverse implication. Suppose $u,w \in X_n$ are such that $u \rightharpoonup w$.
We will show that $A(u)$ and $A(w)$ share $n$ vertices and that, for the non-common vertices, $v^k(w) - v^k(u)$ is in the direction of a positive root; thereby showing that $A(u) \lessdot A(w)$.

\noindent\textbf{Case 1:}
If $u = \ol{w_1} w_2 \dots w_n$ and $w_1 \in [2,n]$, then
\[
\CDes(w^{-1}) =
\begin{cases}
\CDes(w^{-1}) \cup \{ n \} & \text{if } \ol{u_1} = n,\\
\CDes(w^{-1}) & \text{if } \ol{u_1} \in [2,n-1].
\end{cases}
\]
In any case, $\CDes(w^{-1}) \cap [n-1] = \CDes(u^{-1}) \cap [n-1]$ and $v^j(w) = v^j(u)$ for $2 \leq j \leq n+1$.
For $j = 1$, since $w_2 \geq 2$,
\[
v^1(w) - w^1(u) = (v^{2}(w) + \tfrac{1}{2} e_{w_1}) - (v^{2}(u) + \tfrac{1}{2} e_{\ol{w_1}}) = e_{w_1} \in \tfrac{1}{2} \Phi^+_{C_n}.
\]

\noindent\textbf{Case 2:}
If $u = w_1 \dots w_{i+1} w_i \dots w_n$ for some $i \in [n-1]$ and $w_i \ll w_{i+1}$, then
\[
\CDes(w^{-1}) = \CDes(u^{-1}),
\]
since $w_i$ and $w_{i+1}$ (and $\ol{w_i}$ and $\ol{w_{i+1}}$) are not cyclically adjacent and they are the only entries that change positions between $\wt{u}$ and $\wt{w}$.
Thus, $v^j(w) = v^j(u)$ for $j \neq i+1$.
For $j = i+1$, since $w_i \ll w_{i+1}$,
\[
v^{i+1}(w) - v^{i+1}(u) = (v^{i+2}(w) + \tfrac{1}{2} e_{w_{i+1}}) - (v^{i+2}(u) + \tfrac{1}{2} e_{w_i}) = \tfrac{1}{2} (e_{w_{i+1}} - e_{w_{i}}) \in \tfrac{1}{2} \Phi^+_{C_n}.
\]

\noindent\textbf{Case 3:}
If $w_n = \ol{r}$ for some $r \in [2 , n]$ and $u = w_1 w_2 \dots w_{n-1} r$, then
\[
\CDes(w^{-1}) =
\begin{cases}
\CDes(u^{-1}) \setminus \{ n \} \sqcup \{n-1 , \ol{n} \} & \text{if } r = n,\\
\CDes(u^{-1}) \setminus \{ r,\ol{r+1} \} \sqcup \{ r-1 , \ol{r} \} & \text{if } r \leq n-1.
\end{cases}
\]
In both cases, $|\CDes(w^{-1}) \cap [j-1]| = |\CDes(u^{-1}) \cap [j-1]| + \delta_{j,r}$. Hence,
\[
v^{n+1}(w) - v^{n+1}(u) = e_r \in \tfrac{1}{2} \Phi^+_{C_n}.
\]
Moreover, since
\[
v^n(w) =
v^{n+1}(w) + \tfrac{1}{2} e_{\ol{r}} =
(v^{n+1}(u) + e_r) + \tfrac{1}{2} e_{\ol{r}} =
v^{n+1}(u) + \tfrac{1}{2} e_r =
v^n(u),
\]
it follows that $v^j(w) = v^j(u)$ for all $j \leq n$.
\end{proof}

\section{On the $h^*$-polynomial of the closed hypersimplices}

In this section, we relate the $h^*$-polynomials of the closed $\Delta_{C_n,k}$ and the half-open $\Delta'_{C_n,k}$ hypersimplices.

\begin{proposition}
For all $n > 0$ and $1 \leq k \leq 2n-1$,
\begin{align*}
h^*_{\Delta_{C_n,k}}(t)
= & h^*_{\Delta'_{C_n,k}}(t) + (1-t)\left( h^*_{\Delta'_{C_{n-1},k-1}}(t) + h^*_{\Delta_{C_{n-1},k-2}}(t) \right) \\
= & h^*_{\Delta'_{C_n,k}}(t) + \sum_{j \geq 1}(1-t)^j \left( h^*_{\Delta'_{C_{n-j},k-2j+1}}(t) + h^*_{\Delta'_{C_{n-j},k-2j}}(t) \right).
\end{align*}
\end{proposition}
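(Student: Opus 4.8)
The plan is to realize the closed hypersimplex as the disjoint union of the half-open hypersimplex and the single facet it omits, and then to identify that facet with a union of two hypersimplices of rank $n-1$. Concretely, for $k \geq 2$ set $F := \{ x \in \Pi_{C_n} \mid 2x_n = k-1 \}$, the facet of $\Pi_{C_n}$ removed in passing from $\Delta_{C_n,k}$ to $\Delta'_{C_n,k}$. By \cref{def:half-open-C} we have the set partition $\Delta_{C_n,k} = \Delta'_{C_n,k} \sqcup F$, so $L_{\Delta_{C_n,k}}(r) = L_{\Delta'_{C_n,k}}(r) + L_F(r)$ for every $r$. Since $F$ is $(n-1)$-dimensional while the two hypersimplices are $n$-dimensional, passing to Ehrhart series and clearing denominators via \cref{eq:def-h*} turns this into $h^*_{\Delta_{C_n,k}}(t) = h^*_{\Delta'_{C_n,k}}(t) + (1-t)\, h^*_{F}(t)$. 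For $k=1$ there is nothing to remove and the asserted identity reduces to $h^*_{\Delta_{C_n,1}} = h^*_{\Delta'_{C_n,1}}$, which holds by definition; so I assume $k \geq 2$ from here on.

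Next I would identify $F$ explicitly. Fixing $x_n = (k-1)/2$ and projecting away the last coordinate gives an integral equivalence between $F$ (with the induced half-integer lattice in the hyperplane $\{x_n = (k-1)/2\}$) and the polytope $\{ x \in \Pi_{C_{n-1}} \mid k-3 \leq 2x_{n-1} \leq k-1 \}$ with lattice $\tfrac12\ZZ^{n-1}$; indeed, with $x_n$ fixed, the constraint $0 \leq x_n - x_{n-1} \leq 1$ becomes exactly $k-3 \leq 2x_{n-1} \leq k-1$, and the remaining constraints are those of $\Pi_{C_{n-1}}$. This region is precisely $\Delta_{C_{n-1},k-2} \cup \Delta_{C_{n-1},k-1}$, two hypersimplices meeting along the common facet $\{2x_{n-1} = k-2\}$. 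Keeping the first piece closed and removing the shared facet from the second yields the set partition $F \cong \Delta_{C_{n-1},k-2} \sqcup \Delta'_{C_{n-1},k-1}$, whence $h^*_F(t) = h^*_{\Delta_{C_{n-1},k-2}}(t) + h^*_{\Delta'_{C_{n-1},k-1}}(t)$ (all three polytopes having dimension $n-1$). Substituting this into the previous display gives the first asserted equation.

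The second (fully expanded) equation then follows by iterating the first one on its closed summand. Replacing $h^*_{\Delta_{C_{n-1},k-2}}$ using the first identity with $(n,k) \mapsto (n-1,k-2)$, the new half-open term $h^*_{\Delta'_{C_{n-1},k-2}}$ joins $h^*_{\Delta'_{C_{n-1},k-1}}$ at level $(1-t)^1$, while a fresh closed term $(1-t)^2 h^*_{\Delta_{C_{n-2},k-4}}$ is produced; continuing, at level $j$ one collects exactly $(1-t)^j\big(h^*_{\Delta'_{C_{n-j},k-2j+1}}(t) + h^*_{\Delta'_{C_{n-j},k-2j}}(t)\big)$, matching the stated summand. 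Since each step lowers the rank by one and the index by two, the closed term eventually acquires an out-of-range index and vanishes, so the recursion terminates and the sum over $j \geq 1$ is finite.

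Throughout I use the convention $h^*_{\Delta_{C_m,j}} = h^*_{\Delta'_{C_m,j}} = 0$ whenever $j < 1$ or $j > 2m-1$, matching $\Vol = 0$ in \cref{cor:volC-EulB}; this makes the first identity valid for all indices and hence the iteration unconditional. The only delicate point is to check that the degenerate boundary cases of the facet identification are consistent with this convention — most notably $k=2$, where $F$ collapses to the single hypersimplex $\Delta'_{C_{n-1},1}$ and one of the two pieces degenerates to the origin. I expect verifying that bookkeeping, rather than any of the structural steps, to be the main (if minor) obstacle.
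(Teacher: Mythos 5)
Your proposal is correct and follows essentially the same route as the paper's proof: decompose $\Delta_{C_n,k} = \Delta'_{C_n,k} \sqcup \big(\Pi_{C_n} \cap H_{2e_n,k-1}\big)$, identify the facet via the forgetful projection with $\Delta_{C_{n-1},k-2} \sqcup \Delta'_{C_{n-1},k-1}$, use the dimension drop to produce the factor $(1-t)$, and iterate/induct for the second identity. Your explicit treatment of the out-of-range convention and the degenerate case $k=2$ (where $\Delta_{C_{n-1},0}$ collapses to the origin and must be counted as zero, not as a point) is a point the paper glosses over, but the underlying argument is the same.
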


\begin{proof}
We only prove the first equality since the second follows from it by induction.

Recall from \Cref{eq:def-half-open} that
\[
\Delta_{C_n,k} = \Delta'_{C_n,k} \sqcup \left( \Delta_{C_n,k} \cap H_{2e_n,k-1} \right).
\]
The intersection $\Delta_{C_n,k} \cap H_{2e_n,k-1} = \Pi_{C_n} \cap H_{2e_n,k-1}$ is determined by the following (in)equalities
\[
0 \leq 2x_1 , x_2 - x_1 , \dots , x_n - x_{n-1} \leq 1
\qquad\text{and}\qquad
2 x_n = k-1.
\]
Observe that $0 \leq \tfrac{k-1}{2} - x_{n-1} \leq 1$ if and only if $k-3 \leq 2 x_{n-1} \leq k-1$.
Moreover, the linear isomorphism $H_{2e_n,k-1} \to \RR^{n-1}$ given by the projection that forgets the last coordinate
induces an isomorphism on the lattices $\tfrac{1}{2} \ZZ^n \cap H_{2e_n,k-1}$ and $\ZZ^{n-1}$.
Therefore, $\Delta_{C_n,k} \cap H_{2e_n,k-1}$ is integrally equivalent to
$\Delta_{C_{n-1},k-2} \cup \Delta_{C_{n-1},k-1} = \Delta_{C_{n-1},k-2} \sqcup \Delta'_{C_{n-1},k-1}$.
The result follows since these polytopes have dimension one less that $\Delta_{C_n,k}$.
\end{proof}

\begin{example}
We apply the formula above for $n=3$ and $k = 2,k = 4$.
\begin{align*}
h^*_{\Delta_{C_3,2}}(t) = & h^*_{\Delta_{C'_3,2}}(t) + (1-t)h^*_{\Delta'_{C_2,1}}(t) \\
= & (5t+t^2) + (1-t) = 1 + 4t + t^2.
\end{align*}
\begin{align*}
h^*_{\Delta_{C_3,4}}(t) = & h^*_{\Delta_{C'_3,4}}(t) + (1-t)\left(h^*_{\Delta'_{C_2,3}}(t) + h^*_{\Delta'_{C_2,2}}(t) \right) + (1-t)^2 h^*_{\Delta'_{C_1,1}}(t) \\
= & (3t+3t^2) + (1-t)(t + 2t) + (1-t)^2 = 1 + 4t + t^2.
\end{align*}
More values of $h^*_{\Delta_{C_n,k}}(t)$ can be found in \cref{tab:h-closed}.
\end{example}

\section{On the $h^*$-polynomial of the fundamental parallelepiped}
\label{s:Psi}

We now turn our attention to the $h^*$-polynomial of the fundamental parallelepiped $\Pi_{C_n}$, which, by \cref{cor:h*-parallel}, corresponds to the polynomial $\Psi_{C_n}(t)$ defined in \Cref{eq:def-Psi}. For $n \geq 1$ and $0 \leq k \leq n$, we define
\[
\defnn{\psi_{n,k}} := | \{ w \in X_n \mid \basc(w) = k\}|,
\]
so that $\Psi_{C_n}(t) = \sum_k \psi_{n,k} t^k$. We will show that the numbers $\psi_{n,k}$ exhibit behavior analogous to the Eulerian numbers. To begin, we present an explicit recursion for $\psi_{n,k}$, which can be proven combinatorially.

\begin{proposition}
\label{p:recurrence}
The numbers $\psi_{n,k}$ are completely determined by the following recurrence.
For all $n > 1$,
\begin{equation}
\label{eq:recurrence}
\psi_{n,k} = (2n-2k+1) \psi_{n-1,k-1} + (2k+1) \psi_{n-1,k},
\end{equation}
with initial conditions $\psi_{1,0} = 1$ and $\psi_{1,k} = 0$ for $k \neq 0$.
\end{proposition}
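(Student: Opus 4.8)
The plan is to prove the recurrence combinatorially by an \emph{insertion argument}, building each $w \in X_n$ from a smaller signed permutation by inserting the largest available value. Given $w \in \hypoct_n$, delete from its window notation the unique entry equal to $n$ or $\ol{n}$; the remaining word has absolute values exactly $[n-1]$, so it is the window notation of an element $u \in \hypoct_{n-1}$, and since $n \geq 2$ the sign of the $\pm 1$ entry is untouched, whence $w \in X_n$ if and only if $u \in X_{n-1}$. Conversely every $w \in X_n$ reducing to a fixed $u \in X_{n-1}$ arises by inserting $n$ or $\ol{n}$ into one of the $n$ gaps of the window of $u$, for $2n$ insertions in all (consistent with $|X_n| = 2^{n-1}n! = 2n\cdot |X_{n-1}|$). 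The crux is to determine, for a fixed $u$ with $\basc(u)=m$, how each of these $2n$ insertions changes the big ascent statistic.

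It is convenient to read $\basc(w)$ off the length-$(n+2)$ sequence $(\ol{w_1},w_1,w_2,\dots,w_n,\ol{w_n})$: its $n+1$ consecutive pairs correspond bijectively to the positions $\{\ol{1}\}\cup[n]$, a pair $(a,b)$ recording a big ascent precisely when $a \ll b$. Inserting $\pm n$ into a gap removes one pair and creates two new pairs involving the inserted extreme value, with the ``turn-around'' pairs $(\ol{w_1},w_1)$ and $(w_n,\ol{w_n})$ handled separately at the front and back. A direct case analysis over the three gap types and the two signs shows that every insertion changes $\basc$ by $0$ or $+1$, never decreasing it and never increasing it by more than one; the point is that whenever the removed pair was a big ascent, exactly one of the two created pairs is again a big ascent. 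I expect this uniform bound $\Delta\in\{0,1\}$ to be the main obstacle, since it requires tracking exactly when $n \ll v$ fails because $v=n-1$, or $\ol{n}\ll v$ fails because $v=\ol{n-1}$ — the borderline situations where the inserted extreme sits one step above or below a neighbour.

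With the per-insertion behaviour in hand, the final step counts the insertions that \emph{preserve} $\basc$. Summing the indicator over the $n$ positive insertions, the big-ascent conditions range over the positions $\ol{1},1,\dots,n-1$, that is over all of $\{\ol{1}\}\cup[n-1]$ exactly once, contributing $m$; the remaining ``$\,u_i=n-1\,$'' conditions add $1$ precisely when the entry of $u$ of absolute value $n-1$ is positive, and this never overlaps a counted big ascent (as $n-1$ and $\ol{n-1}$ are the maximum and minimum of $\dbra{n-1}$). A symmetric computation over the $n$ negative insertions contributes $m$ plus $1$ exactly when that entry is negative. Since exactly one of these two cases occurs, precisely $2m+1$ insertions preserve $\basc$ and hence $2n-2m-1$ raise it by one. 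Grouping the $w \in X_n$ with $\basc(w)=k$ by their reduction $u$, such $u$ satisfy either $\basc(u)=k$ (contributing $2k+1$ each) or $\basc(u)=k-1$ (contributing $2n-2(k-1)-1 = 2n-2k+1$ each), which is exactly \Cref{eq:recurrence}. The base case is immediate: $X_1 = \{1\}$ with $\basc(1)=0$, so $\psi_{1,0}=1$ and $\psi_{1,k}=0$ otherwise.
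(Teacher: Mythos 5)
Your proposal is correct and takes essentially the same route as the paper's proof: delete $\pm n$ to reduce to $X_{n-1}$, show every insertion of $n$ or $\ol{n}$ changes $\basc$ by $0$ or $+1$, and count exactly $2m+1$ insertions preserving $\basc$ (two per big ascent plus one coming from the entry of absolute value $n-1$). The only cosmetic difference is bookkeeping: you total the preserving insertions by sign of the inserted letter ($m$ plus an indicator for each sign), whereas the paper totals them by big-ascent position; the underlying bijection and case analysis are identical.
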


\begin{proof}
Given $w \in X_n$, consider the signed permutation (in $X_{n-1}$) obtained by removing $\pm n$ from the window notation of $w$.
Then, the statistic $\basc$ is either unchanged or it decreases by exactly~$1$.
Indeed, to increase the statistic $\basc$ by removing a letter $w_i$, we need $w_{i-1} \not\!\!\ll w_i \not\!\!\ll w_{i+1}$ and $w_{i-1} \ll w_{i+1}$.
But this can only occur if $w_{i-1}, w_i, w_{i+1}$ are consecutive in the usual order of $\dbra{n}$.
Since we are removing either the minimum ($\ol{n}$) or maximum ($n$) element of $\dbra{n}$, $\basc$ cannot increase.

To conclude the recurrence in \eqref{eq:recurrence}, we show that if $w \in X_{n-1}$ has $\basc(w) = k$,
then exactly $2k + 1$ out of the $2n$ possible insertions of $\pm n$ do not increase the statistic $\basc$.

Let $w \in X_{n-1}$ with $\basc(w) = k$.
Take a big ascent position $i \in \BAsc(w)$, including the edge cases $i = \ol{1}$ ($w_1 \leq \ol{2}$) and $i = n-1$ ($w_{n-1} \geq 2$).
Note that $w_i \ll w_{i^+}$ implies both $w_i \neq n-1$ and $w_{i^+} \neq \ol{n-1}$,
the maximum and minimum elements, respectively.
Then, inserting $n$ ($w_i \ll n > w_{i+1}$)
or $\ol{n}$ ($w_i > \ol{n} \ll w_{i+1}$) after $w_i$,
does not change the statistic $\basc$.
In addition, if $n-1$ (resp. $\ol{n-1}$) appears in the window notation of in $w$,
we can insert $n$ after $n-1$ (resp. $\ol{n}$ before $\ol{n-1}$) without changing $\basc$.
Inserting $\pm n$ in any other position will increase $\basc$ by one
(either $w_{j} < n \gg w_{j+1}$ or $w_j \gg \ol{n} < w_{j+1}$).
\end{proof}

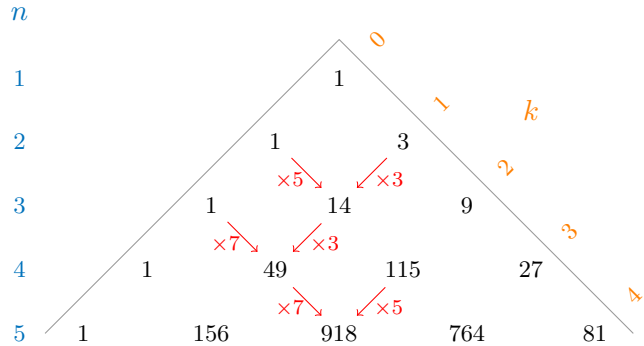
\begin{figure}[ht]
\centering
\begin{tikzpicture}
[scale = .85]
\node (10) at ( 0, 0) {\footnotesize $1$};
\node (20) at (-1,-1) {\footnotesize $1$};
\node (21) at ( 1,-1) {\footnotesize $3$};
\node (30) at (-2,-2) {\footnotesize $1$};
\node (31) at ( 0,-2) {\footnotesize $14$};
\node (32) at ( 2,-2) {\footnotesize $9$};
\node (40) at (-3,-3) {\footnotesize $1$};
\node (41) at (-1,-3) {\footnotesize $49$};
\node (42) at ( 1,-3) {\footnotesize $115$};
\node (43) at ( 3,-3) {\footnotesize $27$};
\node (50) at (-4,-4) {\footnotesize $1$};
\node (51) at (-2,-4) {\footnotesize $156$};
\node (52) at ( 0,-4) {\footnotesize $918$};
\node (53) at ( 2,-4) {\footnotesize $764$};
\node (54) at ( 4,-4) {\footnotesize $81$};
\node[] (n1) at (-5, 0) [color=RoyalBlue] {\footnotesize $1$};
\node[] (n2) at (-5,-1) [color=RoyalBlue] {\footnotesize $2$};
\node[] (n3) at (-5,-2) [color=RoyalBlue] {\footnotesize $3$};
\node[] (n4) at (-5,-3) [color=RoyalBlue] {\footnotesize $4$};
\node[] (n5) at (-5,-4) [color=RoyalBlue] {\footnotesize $5$};
\node (n) at (-5,1) [color=RoyalBlue] {$n$};
\node (k) at (3,-.5) [color=orange] {$k$};
\node[rotate=45] (k1) at ( .6, .6) [color=orange]{\footnotesize $0$};
\node[rotate=45] (k2) at (1.6,- .4) [color=orange]{\footnotesize $1$};
\node[rotate=45] (k3) at (2.6,-1.4) [color=orange]{\footnotesize $2$};
\node[rotate=45] (k4) at (3.6,-2.4) [color=orange]{\footnotesize $3$};
\node[rotate=45] (k5) at (4.6,-3.4) [color=orange]{\footnotesize $4$};
\draw[red,->] (21) -- (31) node[near end,right] {\scriptsize $\times3$};
\draw[red,->] (31) -- (41) node[near end,right] {\scriptsize $\times3$};
\draw[red,->] (42) -- (52) node[near end,right] {\scriptsize $\times 5$};;
\draw[red,->] (20) -- (31) node[near end,left] {\scriptsize $\times 5$};
\draw[red,->] (30) -- (41) node[near end,left] {\scriptsize $\times 7$};
\draw[red,->] (41) -- (52) node[near end,left] {\scriptsize $\times 7$};
\draw[black!40,shift={(0,.6)}]
(0,0) -- ( 4.6,-4.6)
(0,0) -- (-4.6,-4.6);
\end{tikzpicture}
\caption{The first values of $\psi_{n,k}$ computed using the recurrence in \Cref{eq:recurrence}.
For instance, $\psi_{5,2} = (2 \cdot 5 - 2 \cdot 2 + 1) \cdot 49 + (2 \cdot 2 + 1) \cdot 115 = 918$.
Each number is the weighted sum of the values directly north-east and north-west of it,
and the weights are constant along the diagonals.
See the last row of \cref{tab:h-half-open} for more values of $\psi_{n,k}$.
}
\label{f:TrianBasc}
\end{figure}

\begin{corollary}[Linear polynomial recurrence]\label{c:recurrence}
For any $n \geq 2$,
\[
\Psi_{C_n}(t) = (1 + (2n-1)t) \Psi_{C_{n-1}}(t) + 2t(1-t)\Psi'_{C_{n-1}}(t).
\]
\end{corollary}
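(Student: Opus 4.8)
The plan is to derive the polynomial recurrence in \cref{c:recurrence} directly from the numerical recurrence for $\psi_{n,k}$ established in \cref{p:recurrence}. Since $\Psi_{C_n}(t) = \sum_k \psi_{n,k} t^k$, the strategy is to multiply the recurrence \eqref{eq:recurrence} by $t^k$ and sum over all $k$, translating each term on the right-hand side into an operation on the polynomial $\Psi_{C_{n-1}}(t)$.

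First I would write
\[
\Psi_{C_n}(t) = \sum_k \psi_{n,k} t^k = \sum_k \big( (2n-2k+1)\psi_{n-1,k-1} + (2k+1)\psi_{n-1,k} \big) t^k,
\]
and then split this into two sums. For the second sum, I would use the standard identity $\sum_k (2k+1)\psi_{n-1,k} t^k = \Psi_{C_{n-1}}(t) + 2t\,\Psi'_{C_{n-1}}(t)$, recognizing $\sum_k k\,\psi_{n-1,k}t^k = t\,\Psi'_{C_{n-1}}(t)$. For the first sum, after re-indexing with $j = k-1$, the coefficient $2n - 2k + 1 = 2n - 2(j+1) + 1 = (2n-1) - 2j$ splits into a constant part $(2n-1)$ contributing $(2n-1)t\,\Psi_{C_{n-1}}(t)$ and a linear-in-$j$ part $-2j$ contributing $-2t\sum_j j\,\psi_{n-1,j}t^j = -2t\cdot t\,\Psi'_{C_{n-1}}(t) = -2t^2\,\Psi'_{C_{n-1}}(t)$.

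Collecting these four contributions gives
\[
\Psi_{C_n}(t) = \big(1 + (2n-1)t\big)\Psi_{C_{n-1}}(t) + \big(2t - 2t^2\big)\Psi'_{C_{n-1}}(t),
\]
and since $2t - 2t^2 = 2t(1-t)$, this is exactly the claimed identity. The computation is entirely routine bookkeeping once the derivative identities are in hand, so there is no genuine obstacle; the only point requiring care is correctly tracking the shifted index in the first sum and ensuring the boundary terms (e.g.\ the $k=0$ term, where $\psi_{n-1,-1}=0$) do not spuriously contribute. I would double-check the derivation against the small values in \cref{f:TrianBasc}, for instance verifying that plugging $\Psi_{C_2}(t) = 1 + 3t$ into the recurrence with $n=3$ recovers $\Psi_{C_3}(t) = 1 + 14t + 9t^2$.
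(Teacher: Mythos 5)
Your proof is correct and is exactly the intended argument: the paper states \cref{c:recurrence} as an immediate consequence of \cref{p:recurrence} without writing out the manipulation, and your derivation --- multiplying \eqref{eq:recurrence} by $t^k$, summing, reindexing the shifted term, and using $\sum_k k\,\psi_{n-1,k}t^k = t\,\Psi'_{C_{n-1}}(t)$ --- is precisely that omitted bookkeeping, with the numerical check against $\Psi_{C_3}(t) = 1+14t+9t^2$ confirming it.
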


\begin{remark}
The type B Eulerian polynomials $E_{B_n}(t) := \sum_k B_{n,k} t^k$ also satisfy the linear polynomial recurrence in \cref{c:recurrence};
see for instance Petersen's book \cite[Theorem 13.2]{petersen15}.
The two families of polynomials are therefore distinguished by their initial conditions:
$\Psi_{C_1}(t) = 1$ while $E_{B_1}(t) = 1 + t$.
We discuss further relations between $\Psi_{C_n}(t)$ and $E_{B_n}(t)$ in \cref{s:flag-des} below.
\end{remark}

One can use the recurrence in \cref{c:recurrence} to write a differential equation that determines the generating function of the polynomials $\Psi_{C_n}(t)$ and try to deduce a closed formula for it. Instead, we give a more elegant derivation of this generating function using Ehrhart theory.

\subsection{Generating function of big ascents}

Let $\defnn{{\rm Eul}_A(t,x)}$ be the exponential generating function for the classical Eulerian polynomials $E_{A_n}(t)$:
\begin{equation}
\label{eq:generatin-Eulerian}
\defnn{{\rm Eul}_A(t,x)} := \sum_{n \geq 0} E_{A_n}(t) \dfrac{x^n}{n!} = \dfrac{t-1}{t - e^{x(t-1)}}.
\end{equation}
We refer the reader to Foata's survey \cite[Section 3]{foata10eulerian} for a derivation of this formula.

\vspace{5pt}
\noindent\textbf{\cref{t:generating}.}
The following identity holds
\[
\sum_{n \geq 0} h^*_{\Pi_{C_{n+1}}}(t) \frac{x^n}{n!} = e^{3x(t-1)} {\rm Eul}_A(t,2x)^2.
\]

\begin{proof}
Using the same change of coordinates as in the proof of \cref{p:aux2}, we deduce that $\Pi_{C_n}$, with lattice $\tfrac{1}{2}\ZZ^n$, is integrally equivalent to the box $[0,1] \times [0,2]^{n-1}$, with the usual integer lattice $\ZZ^n$.
Hence, the Ehrhart polynomial of $\Pi_{C_n}$ is $L_{\Pi_{C_n}}(k) = (k+1) (2k+1)^{n-1}$.
Therefore,
\begin{equation}\label{eq:generating-aux}
\dfrac{\Psi_{C_n}(t)}{(1-t)^{n+1}}
= \sum_{k \geq 0} (2k+1)^{n-1} (k+1) t^k.
\end{equation}
Multiplying both sides of the equation above by $\tfrac{x^{n-1}}{(n-1)!}$ and taking the sum over $n \geq 1$, we obtain
\begin{multline*}
\dfrac{1}{(1-t)^2} \sum_{n \geq 0} \dfrac{\Psi_{C_{n+1}}(t)}{(1-t)^{n}} \dfrac{x^n}{n!} =
\sum_{n,k \geq 0} (2k+1)^n (k+1) t^k \dfrac{x^n}{n!} = \\
\sum_{k \geq 0} (k+1) t^k e^{((2k+1)x)} =
e^x \sum_{k \geq 0} (k+1) ( t e^{2x} )^k =
\dfrac{e^x}{(1-te^{2x})^2}.
\end{multline*}
Finally, multiplying both sides by $(t-1)^2$ and substituting $x := x(1-t)$ we get
\[
\sum_{n \geq 0} \Psi_{C_{n+1}}(t) \dfrac{x^n}{n!} =
\dfrac{e^{x(1-t)} (t-1)^2}{(1 - t e^{2x(1-t)})^2} \cdot \dfrac{e^{4x(t-1)}}{e^{4x(t-1)}} =
e^{3x(t-1)} \left( \dfrac{t-1}{t - e^{2x(t-1)}} \right)^2.
\]
The result follows by comparing the right hand side with \Cref{eq:generatin-Eulerian}.
\end{proof}

As a consequence of the proof, we show that the polynomials $\Psi_{C_n}(t)$ have real roots for all $n \geq 1$.
We use the following result of Brenti \cite[Theorem 2.2]{brenti94cox-eul}, which he used to show the real-rootedness of the Eulerian polynomials of type B.

\begin{theorem}[Brenti]
Let
\[
f(t) = \sum_{i=0}^d b_i \binom{t + d - i}{d}
\]
be a polynomial of degree $d$.
Suppose that $f(t)$ has all its roots in the interval $[-1,0]$.
Then the polynomial $\sum_{i=0}^d b_i t^i$ has only real roots.
\end{theorem}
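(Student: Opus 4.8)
The plan is to interpret the passage from $f$ to $g(z):=\sum_{i=0}^d b_i z^i$ through a generating-function dictionary and then to factor $f$ and track $g$ one linear factor at a time. First I would record the identity $\sum_{t\ge 0}\binom{t+d-i}{d}z^t=z^i/(1-z)^{d+1}$, valid because the polynomial $\binom{t+d-i}{d}$ vanishes at the integers $t=0,\dots,i-1$; summing against the $b_i$ gives $\sum_{t\ge 0}f(t)\,z^t=g(z)/(1-z)^{d+1}$. Since all roots of $f$ lie in $[-1,0]$, I may write $f(t)=c\prod_{m=1}^d(t+\rho_m)$ with $c>0$ (replacing $f$ by $-f$ if needed, which only rescales $g$) and each $\rho_m\in[0,1]$. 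The idea is to build $f$ up from the constant $c$ by multiplying in these factors, and to show that each multiplication $f\mapsto(t+\rho)f$ induces a real-rootedness-preserving operation on $g$.

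Second, I would compute that induced operation explicitly. Using $\sum_t(t+\rho)f(t)z^t=(z\tfrac{d}{dz}+\rho)\sum_t f(t)z^t$ together with the fact that $\deg\bigl((t+\rho)f\bigr)=d+1$ raises the denominator to $(1-z)^{d+2}$, a short manipulation shows that the new numerator is
\[
\tilde g(z)=z(1-z)g'(z)+\bigl(\rho+(d+1-\rho)z\bigr)g(z).
\]
Its leading coefficient equals $(1-\rho)$ times that of $g$, and one checks $\tilde g(0)=\rho\,g(0)$. Starting from $f=c$ in degree $0$, where $g=c$ is vacuously real-rooted with positive leading coefficient, it then suffices to prove that for $\rho\in[0,1]$ this operator preserves the class of polynomials with positive leading coefficient and all roots real and $\le 0$.

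Third, I would establish that preservation by the standard interlacing/sign-change argument. If $0>r_1>\dots>r_d$ are the roots of $g$ (assumed distinct and strictly negative, for now), then $\tilde g(r_j)=r_j(1-r_j)g'(r_j)$; since $r_j(1-r_j)<0$ and $\operatorname{sign}g'(r_j)=(-1)^{j-1}$, the values $\tilde g(r_j)$ strictly alternate in sign, forcing one root of $\tilde g$ in each gap $(r_{j+1},r_j)$. Comparing $\tilde g(0)=\rho\,g(0)>0$ against $\tilde g(r_1)<0$ locates a further root in $(r_1,0)$, while comparing the sign at $-\infty$ (governed by the positive leading coefficient $(1-\rho)$) against $\tilde g(r_d)$ locates the last root in $(-\infty,r_d)$. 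This accounts for all $d+1$ roots, all real and negative, so $\tilde g$ remains in the class; induction over the factors $t+\rho_m$ then yields that $g=\sum_i b_i z^i$ is real-rooted.

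The step I expect to demand the most care—the main obstacle—is the degenerate configurations in the interlacing argument: repeated roots of $g$, roots located exactly at $0$, and the endpoint values $\rho\in\{0,1\}$ (at $\rho=1$ the leading coefficient of $\tilde g$ vanishes and the degree drops). I would dispose of these uniformly by a limiting argument: prove the statement in the generic case by perturbing the $\rho_m$ into the open interval $(0,1)$ and the roots of $g$ to be distinct and strictly negative, and then pass to the limit using the fact that the set of polynomials with all roots real and $\le 0$ is closed under coefficientwise limits.
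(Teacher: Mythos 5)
Your proposal is correct, but there is nothing in the paper to compare it against: the paper states this result as Brenti's theorem, citing Theorem~2.2 of his 1994 paper, and uses it purely as a black box to deduce real-rootedness of $\Psi_{C_n}(t)$ from the identity $\sum_k \psi_{n,k}\binom{t+n-k}{n}=(2t+1)^{n-1}(t+1)$. So the comparison must be with Brenti's own derivation, and yours is genuinely different: Brenti works inside the theory of P\'olya frequency sequences and total positivity (the statement is equivalent, via the Aissen--Schoenberg--Whitney--Edrei characterization of PF generating functions, to the assertion that $(f(n))_{n\geq 0}$ is a PF sequence), whereas your route is elementary and self-contained. Your key computations check out: the dictionary $\sum_{t\geq 0}f(t)z^t=g(z)/(1-z)^{d+1}$ with $g(z)=\sum_i b_i z^i$, the one-factor-at-a-time operator $\tilde g(z)=z(1-z)g'(z)+\bigl(\rho+(d+1-\rho)z\bigr)g(z)$, and the sign-alternation count producing $d+1$ distinct negative roots in the generic case are all correct. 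Two points should be made explicit to close your write-up. First, the induction needs $\deg g=d$ at every stage, so that the new leading coefficient is $(1-\rho)$ times the old one; since $b_d=(-1)^d f(-1)$, this fails only when some $\rho_m=1$, so perturbing the $\rho_m$ into $(0,1)$ already guarantees it, and by your own interlacing count it also makes the hypothesis ``distinct, strictly negative roots'' automatic for every intermediate $g$ --- note that one cannot (and need not) perturb the roots of $g$ independently, as they are determined by $f$. Second, the closure step requires the limit polynomial to be nonzero before the Hurwitz-type argument applies; here $g(1)=\sum_i b_i=d!\cdot(\text{leading coefficient of }f)\neq 0$ rules out a zero limit. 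With these observations your proof is complete, and it even yields slightly more than the stated theorem (all roots of $\sum_i b_i t^i$ are nonpositive); what Brenti's approach buys instead is the embedding into the PF/total-positivity framework that powers the broader applications in his work.
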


\begin{theorem}\label{t:real-roots}
For all $n \geq 1$, the polynomial $\Psi_{C_n}(t)$ is real-rooted.
\end{theorem}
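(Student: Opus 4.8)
The plan is to apply Brenti's theorem (stated just above) to the polynomial $\Psi_{C_n}(t)$. Brenti's criterion requires that we express $\Psi_{C_n}(t)$ in the binomial basis $\binom{t+d-i}{d}$ and verify that the resulting polynomial—obtained by reading off the coefficients $b_i$ in that basis—has all its roots in $[-1,0]$. The key observation connecting $\Psi_{C_n}$ to this setup is \Cref{eq:generating-aux} from the proof of \cref{t:generating}, namely
\[
\dfrac{\Psi_{C_n}(t)}{(1-t)^{n+1}} = \sum_{k \geq 0} (2k+1)^{n-1}(k+1)\, t^k.
\]
This is precisely the Ehrhart series of $\Pi_{C_n}$, and the right-hand side encodes the Ehrhart polynomial $L_{\Pi_{C_n}}(k) = (k+1)(2k+1)^{n-1}$.

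First I would recall the standard dictionary between the $h^*$-polynomial and the Ehrhart polynomial in the binomial basis: if $L_P(k) = \sum_{i} b_i \binom{k+d-i}{d}$, then $h^*_P(t) = \sum_i b_i t^i$, where $d = \dim P = n$ here. Since $\Psi_{C_n}(t) = h^*_{\Pi_{C_n}}(t)$ by \cref{cor:h*-parallel}, the polynomial $f(t) := L_{\Pi_{C_n}}(t) = (t+1)(2t+1)^{n-1}$ is exactly the polynomial whose binomial-basis coefficients are the coefficients of $\Psi_{C_n}(t)$ in the ordinary monomial basis. Thus Brenti's theorem applies verbatim with $f(t) = (t+1)(2t+1)^{n-1}$, provided we verify the hypothesis on its roots.

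The root condition is the crux, and fortunately it is immediate: the polynomial $f(t) = (t+1)(2t+1)^{n-1}$ has a single root at $t = -1$ and a root of multiplicity $n-1$ at $t = -\tfrac{1}{2}$, both of which lie in the interval $[-1,0]$. Since $f$ has degree $n = d$, all $n$ of its roots are accounted for and all lie in $[-1,0]$, so the hypothesis of Brenti's theorem is satisfied. I would then conclude directly that $\sum_i b_i t^i = \Psi_{C_n}(t)$ is real-rooted.

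The main subtlety to be careful about is the identification of $f(t)$ with the Ehrhart polynomial expressed in the right binomial basis: one must confirm that the degree of $f$ is exactly $d=n$ (so that the sum $\sum_{i=0}^d b_i \binom{t+d-i}{d}$ truncates correctly and no higher-degree terms are hidden), and that $L_{\Pi_{C_n}}$ indeed has degree $n$. Since $L_{\Pi_{C_n}}(k) = (k+1)(2k+1)^{n-1}$ is visibly a degree-$n$ polynomial in $k$, this is automatic. The rest is the standard and well-known correspondence between $h^*$-coefficients and the binomial expansion of the Ehrhart polynomial, so no genuinely new computation is required—the real-rootedness falls out purely from the factored form of the Ehrhart polynomial of the box $[0,1]\times[0,2]^{n-1}$.
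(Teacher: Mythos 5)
Your proposal is correct and follows essentially the same route as the paper: both extract from \Cref{eq:generating-aux} the identity $\sum_k \psi_{n,k}\binom{t+n-k}{n} = (2t+1)^{n-1}(t+1)$ (i.e., the standard $h^*$/Ehrhart binomial-basis dictionary applied to $L_{\Pi_{C_n}}(k) = (k+1)(2k+1)^{n-1}$) and then apply Brenti's theorem, noting that the roots $-1$ and $-\tfrac{1}{2}$ lie in $[-1,0]$. Your verification that $\deg f = n$ matches the paper's implicit use of the same fact, so there is no gap.
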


\begin{proof}
\Cref{eq:generating-aux} shows that
\[
\sum_k \psi_{n,k} \binom{t+n-k}{n} = (2t+1)^{n-1} (t+1).
\]
Clearly, all the roots of $(2t+1)^{n-1} (t+1)$, namely $-1$ and $-\tfrac{1}{2}$, lie on the interval $[-1,0]$.
Therefore, by Brenti's theorem above, the polynomial $\Psi_{C_n}(t) = \sum_k \psi_{n,k} t^k$ is real-rooted.
\end{proof}

\section{Relations with type B Eulerian numbers, two ways}
\label{s:flag-des}

In type A, the Eulerian numbers appear both as the $h^*$-polynomial of the fundamental parallelepiped (or equivalently, as the cover-counting statistic on the poset of cyclic permutations of \cite{ACR21}) and as the volumes of the hypersimplices. In this section, we explore analogous results for type C. An analog of the volume formula was not expected, as there are $2n-1$ hypersimplices of type $C_n$ but only $n+1$ possible values for an Eulerian statistic on $\hypoct_n$.

\subsection{Big ascents and Coxeter descents}

If we use \cref{t:fexc-desB} instead of \cref{t:cdes-basc} to compute the $h^*$-polynomial of $\Pi_{C_n}$ from that of the half-open hypersimplices $\Delta'_{C_n,k}$, we obtain the following result.

\begin{corollary}
\label{t:half-weak1}
For all $n \geq 1$,
\[
\Psi_{C_n}(t) = h^*_{\Pi_{C_n}}(t) = \displaystyle\sum_{ w \in X_n } t^{\des_B(w)}.
\]
\end{corollary}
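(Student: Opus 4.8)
The first equality $\Psi_{C_n}(t) = h^*_{\Pi_{C_n}}(t)$ is exactly the content of \cref{cor:h*-parallel}, so the plan is to establish only the second equality, and to do so by summing the formula of \cref{t:fexc-desB} over all admissible values of $k$. The key structural input is that the half-open hypersimplices $\Delta'_{C_n,k}$, for $k = 1,\dots,2n-1$, partition $\Pi_{C_n}$ as a disjoint union; this is exactly how they are defined in \cref{def:half-open-C}, since the half-open slabs $k-1 < 2x_n \leq k$ (with $k=1$ closed at the bottom) tile the range $0 \leq 2x_n \leq 2n-1$ of $\langle x,\theta\rangle$ over $\Pi_{C_n}$.

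Because this partition is set-theoretic, the lattice-point counts add: $L_{\Pi_{C_n}}(r) = \sum_{k} L_{\Delta'_{C_n,k}}(r)$ for every $r \geq 0$. Since each $\Delta'_{C_n,k}$ and $\Pi_{C_n}$ share the same dimension $n$, they have the same Ehrhart denominator $(1-t)^{n+1}$, which lets me pass from Ehrhart series to $h^*$-polynomials directly (the same reasoning already used in the proof of \cref{t:cdes-basc}), giving $h^*_{\Pi_{C_n}}(t) = \sum_{k=1}^{2n-1} h^*_{\Delta'_{C_n,k}}(t)$. Substituting the expression from \cref{t:fexc-desB} then yields
\[
h^*_{\Pi_{C_n}}(t) = \sum_{k=1}^{2n-1} \sum_{\substack{w \in X_n \\ \fexc(w) = k-1}} t^{\des_B(w)}.
\]

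The final step is to collapse the double sum into a single sum over $X_n$: as $k$ ranges over $1,\dots,2n-1$, the fibers $\{w \in X_n : \fexc(w) = k-1\}$ partition $X_n$, since each $w$ has a single well-defined value $\fexc(w) \in \{0,\dots,2n-2\}$. Hence the right-hand side equals $\sum_{w \in X_n} t^{\des_B(w)}$, as claimed. I do not expect any genuine obstacle here: all of the combinatorial and generating-function work has already been carried out in \cref{t:fexc-desB}, so the only point requiring care is the bookkeeping that the $\fexc$-fibers exhaust $X_n$ exactly once. As a consistency check, the resulting identity matches \eqref{eq:h-parallel1} with $\basc$ replaced by $\des_B$, confirming that $\basc$ and $\des_B$ are equidistributed on $X_n$ even though they differ term by term.
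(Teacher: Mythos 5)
Your proposal is correct and follows essentially the same route as the paper: the paper obtains this corollary precisely by summing \cref{t:fexc-desB} over $k$, using the decomposition $h^*_{\Pi_{C_n}}(t) = \sum_k h^*_{\Delta'_{C_n,k}}(t)$ (already established via the partition of $\Pi_{C_n}$ into half-open hypersimplices of full dimension) together with \cref{cor:h*-parallel} for the first equality. Your extra bookkeeping---that the fibers of $\fexc$ partition $X_n$ and that $\fexc(w) \leq 2n-2$ on $X_n$---is a correct and harmless elaboration of what the paper leaves implicit.
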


Since for all $w \in \hypoct_n$ we have $\des_B(w) + \des_B(\ol{w}) = n$, and $\hypoct_n = X_n \sqcup \ol{X_n}$, we deduce the following.

\begin{corollary}\label{t:half-weak2}
For all $n \geq 1$,
\[
\Psi_{C_n}(t) + t^n \Psi_{C_n}(t^{-1}) = E_{B_n}(t),
\]
where $E_{B_n}(t)$ is the type B Eulerian polynomial.
\end{corollary}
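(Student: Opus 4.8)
The plan is to read off the identity directly from the combinatorial description $\Psi_{C_n}(t) = \sum_{w \in X_n} t^{\des_B(w)}$ provided by \cref{t:half-weak1}, combined with the two facts stated immediately before the corollary. Since the definition of the type B Eulerian numbers $B_{n,k}$ gives $E_{B_n}(t) = \sum_{w \in \hypoct_n} t^{\des_B(w)}$, it suffices to recognize $t^n \Psi_{C_n}(t^{-1})$ as the generating polynomial for $\des_B$ over the complementary set $\ol{X_n} := \{ \ol{w} \mid w \in X_n \}$, where $\ol{w}$ denotes the signed permutation with window notation $\ol{w_1} \dots \ol{w_n}$. Adding the two sums then reconstitutes the sum over all of $\hypoct_n$.

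First I would confirm that $\hypoct_n = X_n \sqcup \ol{X_n}$. Since $\ol{w}(j) = 1$ forces $w(j) = \ol{1}$, one computes $\ol{w}^{-1}(1) = w^{-1}(\ol{1}) = \ol{w^{-1}(1)}$; hence $w^{-1}(1) > 0$ implies $\ol{w}^{-1}(1) < 0$, so $w \in X_n$ forces $\ol{w} \notin X_n$. As $w \mapsto \ol{w}$ is an involution of $\hypoct_n$ and $w^{-1}(1) \neq 0$ for every $w$, the sets $X_n$ and $\ol{X_n}$ are indeed complementary.

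Next I would establish the key identity $\des_B(w) + \des_B(\ol{w}) = n$. For each $i \in [n-1]$ we have $\ol{w}_i > \ol{w}_{i+1}$ exactly when $w_i < w_{i+1}$, so precisely one of $i$ being a descent of $w$ or of $\ol{w}$ holds; these $n-1$ positions contribute $n-1$ to the sum. The remaining contribution comes from position $0$: under the convention $w_0 = 0 = \ol{w}_0$, position $0$ is a descent of $w$ exactly when $w_1 < 0$ and a descent of $\ol{w}$ exactly when $w_1 > 0$, and since $w_1 \neq 0$ exactly one of these occurs, supplying the final $+1$.

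Combining these, $t^n \Psi_{C_n}(t^{-1}) = \sum_{w \in X_n} t^{n - \des_B(w)} = \sum_{w \in X_n} t^{\des_B(\ol{w})} = \sum_{v \in \ol{X_n}} t^{\des_B(v)}$, where the last equality uses the bijection $w \mapsto \ol w$ from $X_n$ onto $\ol{X_n}$. Adding $\Psi_{C_n}(t) = \sum_{w \in X_n} t^{\des_B(w)}$ and using $\hypoct_n = X_n \sqcup \ol{X_n}$ yields $\sum_{w \in \hypoct_n} t^{\des_B(w)} = E_{B_n}(t)$, as desired. The only genuinely delicate point is the bookkeeping at position $0$ in the descent identity, since the whole argument rests on the convention $w_0 = 0$; everything else is a formal manipulation of the two generating polynomials.
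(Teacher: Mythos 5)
Your proposal is correct and follows exactly the paper's route: the paper deduces the corollary from \cref{t:half-weak1} together with the two facts $\des_B(w) + \des_B(\ol{w}) = n$ and $\hypoct_n = X_n \sqcup \ol{X_n}$, which are precisely the ingredients you establish. The only difference is that you spell out the verifications (the computation $\ol{w}^{-1}(1) = \ol{w^{-1}(1)}$ and the position-$0$ bookkeeping) that the paper leaves implicit.
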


For example, for $n = 3$ we have
\[
\Psi_{C_3}(t) + t^3 \Psi_{C_3}(t^{-1}) = (1 + 14 t + 9 t^2) + (9 t + 14 t^2 + t^3) = 1 + 23 t + 23 t^2 + t^3 = E_{B_3}(t).
\]

\begin{remark}
\cref{t:half-weak1} implies that the polynomial $\Psi_{C_n}(t)$ agrees with the polynomial defined by Matthew Hyatt in \cite[Theorem 1.1]{Hyatt16Recurrences}. Using different methods, Hyatt also proves the real-rootedness of these polynomials (c.f. \cref{t:real-roots}).
\end{remark}

\subsection{Volumes of type C hypersimplices and type B Eulerian numbers}

Let $w_0 = \ol{n} \dots \ol{2} \, \ol{1}$ be the longest element in $\hypoct_n$. Then, $\cdes(w_0 w) = \cdes(w)$ for any signed permutation $w$. Thus, plugging $t = 1$ in \cref{t:cdes-basc} recovers the formula for the volume of the type C hypersimplices from Lam-Postnikov; \Cref{eq:LP-volume}:
\[
\Vol(\Delta_{C_n,k}) = |\{ w \in X_n \mid \cdes(w) = k \}| = \dfrac{1}{2} |\{ w \in \hypoct_n \mid \cdes(w) = k \}|.
\]

Plugging $t = 1$ in \cref{t:fexc-desB}, yields a different formula.

\vspace{5pt}
\noindent\textbf{\cref{cor:volume-fexc}.}
For all $n \geq 1$ and $k \geq 1$,
\[
\Vol(\Delta_{C_n,k}) = |\{ w \in X_n \mid \fexc(w) = k-1 \}|.
\]

By comparing both formulas, one deduces that $\fexc + 1$ and $\cdes$ have the same distribution on~$X_n$. However, their distribution over all $\hypoct_n$ differ; as $\fexc$ takes $2n$ different values while $\cdes$ only takes $2n-1$ values.

\vspace{5pt}
\noindent\textbf{\cref{cor:volC-EulB}.}
For all $n \geq 1$ and $k \geq 1$, we have:
\[
B_{n,k} = \Vol(\Delta_{C_n,2k-1}) + 2 \Vol(\Delta_{C_n,2k}) + \Vol(\Delta_{C_n,2k+1}),
\]
where $\Vol(\Delta_{C_n,j}) = 0$ whenever $j < 1$ or $j > 2n-1$.

\begin{proof}
Recall from \cref{ss:hypoct} that the statistic $\exc_B(w) := \left\lfloor \tfrac{\fexc(w) + 1}{2} \right\rfloor$ is Eulerian. Moreover, a signed permutation $w \in X_n$ satisfies $\fexc(w) = r$ if and only if the signed permutation $w' \in \ol{X_n}$ obtained from $w$ by negating $1$ satisfies $\fexc(w') = r+1$. Thus,
\begin{align*}
B_{n,k} &= |\{ w \in \hypoct_n \mid \exc_B(w) = k \}| \\
&= |\{ w \in \hypoct_n \mid \fexc(w) = 2k-1 \}| + |\{ w \in \hypoct_n \mid \fexc(w) = 2k \}| \\
&= |\{ w \in X_n \mid \fexc(w) = 2k-2 \}| + 2 |\{ w \in X_n \mid \fexc(w) = 2k-1 \}| + |\{ w \in X_n \mid \fexc(w) = 2k \}| \\
&= \Vol(\Delta_{C_n,2k-1}) + 2 \Vol(\Delta_{C_n,2k}) + \Vol(\Delta_{C_n,2k+1}).
\end{align*}
The last step uses \cref{cor:volume-fexc}.
\end{proof}

\section{Limit Poset: Strict Partitions}\label{s:limit}

\cref{t:generating} provides a simple description of the exponential generating function for the distribution of cover relations in $\mathcal{I}_{\Phi_{C_n}} \cong (X_n,\leq)$. This prompts natural questions about the asymptotic behavior of these partial orders. In type A, Laget-Chapelier, Reutenauer, and the first author \cite{ACR21} proved that $\mathcal{I}_{\Phi_{A_n}}$, the weak order on the alcoves contained in $\Pi_{A_n}$, converges to Young's lattice of partitions as $n$ goes to infinity. We establish an analogous result for $(X_n,\leq)$.

Let $\defnn{\mathcal{P}}$ denote \defn{Young's lattice}, which is the partial order on integer partitions given by the containment of their corresponding Ferrers diagrams. The collection of \defn{strict partitions}--partitions in which all parts are distinct--forms the sublattice $\defnn{\mathcal{SP}}$. \cref{f:limiting_poset} (right) shows the Hasse diagram of $\mathcal{SP}$ truncated at rank $6$.

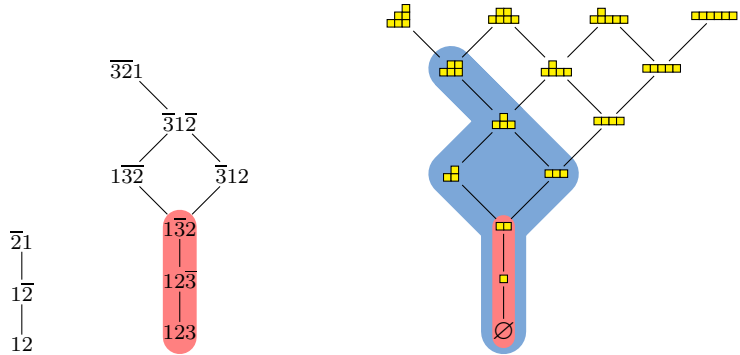
\begin{figure}[ht]
\centering
\begin{tikzpicture}[scale =.7]
\node[inner sep = 1pt] (0000000) at ( 0, 0) {\scriptsize $12$};
\node[inner sep = 1pt] (1000000) at ( 0, 1) {\scriptsize $1\ol{2}$};
\node[inner sep = 1pt] (0100000) at ( 0, 2) {\scriptsize $\ol{2}1$};
\draw[] (0000000) -- (1000000);
\draw[] (1000000) -- (0100000);
\end{tikzpicture} \hspace{.7cm}
\begin{tikzpicture}[scale =.7]
\node[inner sep = 1pt] (0000000) at ( 0, 0) {\scriptsize $123$};
\node[inner sep = 1pt] (1000000) at ( 0, 1) {\scriptsize $12\ol{3}$};
\node[inner sep = 1pt] (0100000) at ( 0, 2) {\scriptsize $1\ol{3}2$};
\node[inner sep = 1pt] (1100000) at (-1, 3) {\scriptsize $1\ol{3}\ol{2}$};
\node[inner sep = 1pt] (0010000) at ( 1, 3) {\scriptsize $\ol{3}12$};
\node[inner sep = 1pt] (1010000) at ( 0, 4) {\scriptsize $\ol{3}1\ol{2}$};
\node[inner sep = 1pt] (0110000) at (-1, 5) {\scriptsize $\ol{3}\ol{2}1$};
\draw[] (0000000) -- (1000000);
\draw[] (1000000) -- (0100000);
\draw[] (0100000) -- (1100000);
\draw[] (0100000) -- (0010000);
\draw[] (1100000) -- (1010000);
\draw[] (0010000) -- (1010000);
\draw[] (1010000) -- (0110000);
\begin{scope}[on background layer]
\path[fill=red!50,draw=red!50,line width=.45cm, line cap=round, line join=round]
(0,-.1) -- (0,2);
\end{scope}
\end{tikzpicture} \hspace{1.5cm}
\begin{tikzpicture}[scale =.7]
\node[inner sep = 1pt] (0000000) at ( 0, 0) {$\varnothing$};
\node[inner sep = 1pt] (1000000) at ( 0, 1) {\YS\YB{0}{0}\YF};
\node[inner sep = 1pt] (0100000) at ( 0, 2) {\YS\YB{0}{0}\YB{1}{0}\YF};
\node[inner sep = 1pt] (1100000) at (-1, 3) {\YS\YB{0}{0}\YB{1}{0}\YB{1}{1}\YF};
\node[inner sep = 1pt] (0010000) at ( 1, 3) {\YS\YB{0}{0}\YB{1}{0}\YB{2}{0}\YF};
\node[inner sep = 1pt] (1010000) at ( 0, 4) {\YS\YB{0}{0}\YB{1}{0}\YB{2}{0}\YB{1}{1}\YF};
\node[inner sep = 1pt] (0001000) at ( 2, 4) {\YS\YB{0}{0}\YB{1}{0}\YB{2}{0}\YB{3}{0}\YF};
\node[inner sep = 1pt] (0110000) at (-1, 5) {\YS\YB{0}{0}\YB{1}{0}\YB{2}{0}\YB{1}{1}\YB{2}{1}\YF};
\node[inner sep = 1pt] (1001000) at ( 1, 5) {\YS\YB{0}{0}\YB{1}{0}\YB{2}{0}\YB{3}{0}\YB{1}{1}\YF};
\node[inner sep = 1pt] (0000100) at ( 3, 5) {\YS\YB{0}{0}\YB{1}{0}\YB{2}{0}\YB{3}{0}\YB{4}{0}\YF};
\node[inner sep = 1pt] (1110000) at (-2, 6) {\YS\YB{0}{0}\YB{1}{0}\YB{2}{0}\YB{1}{1}\YB{2}{1}\YB{2}{2}\YF};
\node[inner sep = 1pt] (0101000) at ( 0, 6) {\YS\YB{0}{0}\YB{1}{0}\YB{2}{0}\YB{3}{0}\YB{1}{1}\YB{2}{1}\YF};
\node[inner sep = 1pt] (1000100) at ( 2, 6) {\YS\YB{0}{0}\YB{1}{0}\YB{2}{0}\YB{3}{0}\YB{4}{0}\YB{1}{1}\YF};
\node[inner sep = 1pt] (0000010) at ( 4, 6) {\YS\YB{0}{0}\YB{1}{0}\YB{2}{0}\YB{3}{0}\YB{4}{0}\YB{5}{0}\YF};
\draw[] (0000000) -- (1000000);
\draw[] (1000000) -- (0100000);
\draw[] (0100000) -- (1100000);
\draw[] (0100000) -- (0010000);
\draw[] (1100000) -- (1010000);
\draw[] (0010000) -- (1010000);
\draw[] (0010000) -- (0001000);
\draw[] (1010000) -- (0110000);
\draw[] (1010000) -- (1001000);
\draw[] (0001000) -- (1001000);
\draw[] (0001000) -- (0000100);
\draw[] (0110000) -- (1110000);
\draw[] (0110000) -- (0101000);
\draw[] (1001000) -- (0101000);
\draw[] (1001000) -- (1000100);
\draw[] (0000100) -- (1000100);
\draw[] (0000100) -- (0000010);
\begin{scope}[on background layer]
\path[fill=RoyalBlue!50,draw=RoyalBlue!50,line width=.6cm, line cap=round, line join=round]
(0,0) -- (0,2) -- (-1,3) -- (0,4) -- (-1,5) -- (1,3) -- (0,2);
\path[fill=red!50,draw=red!50,line width=.3cm, line cap=round, line join=round]
(0,-.1) -- (0,2);
\end{scope}
\end{tikzpicture}
\caption{
Subposets $(Y_n , \leq)$ for $n = 2,3$ (left),
and their embedding in the lattice of strict partitions (right),
represented using \emph{shifted diagrams} (in French notation).}
\label{f:limiting_poset}
\end{figure}

Consider the order ideal $\defnn{Y_n} := \{w \in X_n \mid \cdes(w^{-1}) \leq 2\}$ of $X_n$.
It consists of those signed permutations $w \in X_n$ whose associated alcoves $A(w)$ lie inside $\Delta_{C_n,1} \cup \Delta_{C_n,2}$.
Let $\tau_n : Y_n \to X_{n+1}$ be the function $\tau(w_1 w_2 \dots w_n) = 1 w'_1\, w'_2 \cdots w'_n$ where $w_i' \in \dbra{n+1}$ has the same sign as $w_i$ and satisfies $|w'_i| = |w_i|+1$.
For example,
\[
\tau_2(1\ol{2}) = 1 2\ol{3}
\qquad\text{and}\qquad
\tau_3(\ol{3} 1 \ol{2}) = 1 \ol{4} 2 \ol{3}.
\]

\begin{theorem}
For all $n \geq 1$, $\tau_n(Y_n) \subset Y_{n+1}$ and $\tau_n : Y_n \to Y_{n+1}$ is a poset embedding.
Furthermore, the lattice of strict partitions $\mathcal{SP}$ is the colimit in the category of posets of the diagram
\[
Y_1 \xrightarrow{\tau_1} Y_2 \xrightarrow{\tau_2} Y_3 \xrightarrow{\tau_3} \dots .
\]
\end{theorem}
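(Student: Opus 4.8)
The plan is to reduce the statement to an explicit identification of each finite poset $Y_n$ with an order ideal of $\mathcal{SP}$, under which the maps $\tau_n$ become the inclusions of a nested family of ideals whose union is all of $\mathcal{SP}$. The first and most technical task is to put the elements of $Y_n$ in a normal form. Using \cref{l:alt_CDes}, a position $i \in \dbra{n}$ lies in $\CDes(w^{-1})$ exactly when $w^{-1}(i^+) < w^{-1}(i)$ in the order on $\dbra{n}$. Since $\ol{1} \notin \CDes(w^{-1})$ for $w \in X_n$ and circular descents come in pairs $\{i,\ol{i+1}\}$ for $i \in [n-1]$ (with the position $n$ self-paired), the condition $\cdes(w^{-1}) \le 2$ forces $\CDes(w^{-1}) = \{n\}$ or $\CDes(w^{-1}) = \{i_0,\ol{i_0+1}\}$ for a single $i_0 \in [n-1]$. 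Reading off the inequalities $w^{-1}(1) < \dots < w^{-1}(i_0)$ and $w^{-1}(i_0+1) < \dots < w^{-1}(n)$ that these descent sets encode, I would prove the normal form: $w \in Y_n$ if and only if its positive letters are exactly $1 < 2 < \dots < m$, its negative letters are exactly $\ol{n} < \ol{n-1} < \dots < \ol{m+1}$, and each block appears in this (increasing) order in the window, so that $w$ is a shuffle of these two increasing strings (with $m = i_0$, resp. $m = n$ for the identity). In particular $w_1 \in \{1,\ol{n}\}$, so no element of $Y_n$ has a big ascent at position $\ol{1}$.

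Next I would define $\phi_n \colon Y_n \to \mathcal{SP}$ by sending $w$ to the partition $\lambda(w)$ with parts $\{\, n+1-p : w_p < 0 \,\}$; these are distinct, so $\lambda(w)$ is strict. Its image is $\mathcal{J}_n := \{\lambda \in \mathcal{SP} : \lambda_1 \le n\} \setminus \{(n,n-1,\dots,1)\}$, which I would check is an order ideal of $\mathcal{SP}$. The core point is that $\phi_n$ preserves and reflects covers. Using the normal form, the three cases of \cref{def:cover-rel} become transparent: the case $w_1 \ge 2$ never occurs; a middle big ascent at $i$ happens precisely when $w_i < 0 < w_{i+1}$, and the corresponding move slides a negative letter from position $i$ to $i+1$, lowering the part $n+1-i$ to $n-i$; the last-position big ascent occurs precisely when $w_n < 0$, its move deleting the part of size $1$. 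Matching these against the removable boxes of a shifted Young diagram shows that the lower covers of $w$ in $Y_n$ are in bijection with the lower covers of $\lambda(w)$ in $\mathcal{SP}$. Since in a finite poset the order is the transitive closure of covers, a cover-preserving, cover-reflecting bijection is a poset isomorphism, so $\phi_n \colon Y_n \xrightarrow{\sim} \mathcal{J}_n$.

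I would then observe that $\tau_n$ prepends $1$ and shifts every absolute value up by one: this sends a shuffle with parameter $m$ to a shuffle with parameter $m+1$, so $\tau_n(Y_n) \subseteq Y_{n+1}$ directly from the normal form. Moreover each negative position is shifted by $+1$ while $n$ becomes $n+1$, so the multiset $\{n+1-p\}$ is unchanged, giving $\lambda(\tau_n(w)) = \lambda(w)$, i.e. $\phi_{n+1} \circ \tau_n = \phi_n$. Hence $\tau_n$ is conjugate to the subposet inclusion $\mathcal{J}_n \hookrightarrow \mathcal{J}_{n+1}$ via the isomorphisms $\phi_n, \phi_{n+1}$, and is therefore a poset embedding. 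Finally, $\mathcal{J}_1 \subseteq \mathcal{J}_2 \subseteq \cdots$ with $\bigcup_n \mathcal{J}_n = \mathcal{SP}$, since any strict partition has bounded largest part and bounded number of parts and so lies in $\mathcal{J}_n$ for all large $n$. The colimit of the $\mathcal{J}_n$ (hence of the $Y_n$, by $\phi$) is then $\mathcal{SP}$: given any cocone $\{f_n\}$ to a poset $Z$, the assignment $f(\lambda) := f_n(\lambda)$ for any $n$ with $\lambda \in \mathcal{J}_n$ is well defined by compatibility, and is monotone because any comparable pair lies in a common $\mathcal{J}_n$ whose order is the restriction of that of $\mathcal{SP}$; uniqueness is immediate, so $\mathcal{SP} = \varinjlim Y_n$.

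The step I expect to be the main obstacle is the normal-form characterization together with the cover-reflection half of the isomorphism. Both rest on translating the circular-descent condition on $w^{-1}$—read through the complete notation $\wt{w}$—into exact statements about the signs and relative positions of the letters of $w$, and the sign bookkeeping in the cover analysis must be done with care: one has to rule out spurious big ascents between adjacent equal-sign letters (which are forced to be consecutive in value, hence not $\ll$) and confirm that the exceptional adjacency $\ol{1}, 1$ never arises, since $\ol{1}$ is never a negative letter of a shuffle with $m \ge 1$.
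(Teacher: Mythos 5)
Your proof is correct, and its skeleton is the same as the paper's: the shuffle normal form for $Y_n$, the map sending $w$ to the strict partition recording its negated positions (your $\phi_n$ is exactly the paper's $\lambda_n$), and the realization of $\mathcal{SP}$ as the colimit of the resulting nested family. The differences are in how two steps are carried out. First, for the normal form the paper argues geometrically: elements of $Y_n$ are exactly those reached from the identity by cover moves that negate $n$ at most once, since the hyperplane $\langle x, 2e_n\rangle = 2$ separates $\Delta_{C_n,2}$ from $\Delta_{C_n,3}$; you instead deduce it from \cref{l:alt_CDes} and the pairing $i \in \CDes(w^{-1}) \Leftrightarrow \ol{i+1} \in \CDes(w^{-1})$ for $i \in [n-1]$, together with $\ol{1} \notin \CDes(w^{-1})$ on $X_n$ --- a purely combinatorial route that is longer but independent of the alcove picture. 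Second, you prove strictly more than the paper records: that $\phi_n$ is a poset isomorphism onto the explicit order ideal $\mathcal{J}_n = \{\lambda \in \mathcal{SP} \mid \lambda_1 \le n\} \setminus \{(n,n-1,\dots,1)\}$, with covers both preserved and reflected (your matching of big ascents of a shuffle against removable boxes of the shifted diagram is the right verification, and excluding the staircase correctly reflects the constraint $k \geq 1$ in the shuffle description). The paper only states that $\lambda_n$ sends covers to covers and that each strict partition with $\mu_1 \le n$ has a unique preimage; your strengthening is precisely what makes the ``poset embedding'' assertion for $\tau_n$ and the universal property of the colimit fully explicit, so your write-up supplies details the paper leaves to the reader.
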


\begin{proof}
The hyperplane separating $\Delta_{C_n,2}$ and $\Delta_{C_n,3}$ is given by the equation $\langle x , 2 e_n \rangle = 2$.
Thus, the elements $w \in Y_n$ are those obtained from the identity $1 \mkern 1mu 2 \dots n$ by applying the relations in \cref{eq:cover-rel} and changing the sign of $n$ at most once.
Therefore, the window notation of elements $w \in Y_n$ are precisely the shuffles of the words $1 \mkern 1mu 2 \dots k$ and $\ol{n} \dots \ol{k+1}$ for some $1 \leq k \leq n$.
Define a map $\lambda_n : Y_n \to \mathcal{SP}$ by
\[
\defnn{\lambda_n(w)} := ( n + 1 - i \mid i \in [n] \,,\, w_i < 0 ).
\]
For example,
$\lambda_6(1 \mkern 1mu 2 \mkern 1mu 3 \mkern 1mu 4 \mkern 1mu 5 \mkern 1mu \ol{6}) = (1) = {\begin{gathered}\begin{tikzpicture}[scale = .2]\YB{0}{0}\YF\end{gathered}}$, and
$\lambda_6(1 \mkern 1mu \ol{6} \mkern 1mu 2 \mkern 1mu 3 \mkern 1mu \ol{5} \mkern 1mu \ol{4}) = (5,2,1) = {\begin{gathered}\begin{tikzpicture}[scale = .2]\YB{0}{0}\YB{1}{0}\YB{2}{0}\YB{3}{0}\YB{4}{0}\YB{1}{1}\YB{2}{1}\YB{2}{2}\YF\end{gathered}}$.
It follows form the form of the cover relations in $Y_n$ that $\lambda_n$ sends cover relations in $Y_n$ to cover relations in $\mathcal{SP}$.

Since $\tau_n$ only adds a positive letter in the first position, $\lambda_{n+1} \circ \tau_n = \lambda_n$.
This shows that $\mathcal{SP}$ (together with the maps $\lambda_n$) is a \emph{cocone} over the diagram $\big( Y_1 \xrightarrow{\tau_1} Y_2 \xrightarrow{\tau_2} \dots \big)$.
To prove that $\mathcal{SP}$ is in fact the colimit of this diagram, it suffices to observe that any strict partition $\mu$ with $\mu_1 \leq n$ determines a unique shuffle $w$ of $1,\dots,k$ and $\ol{n} \dots \ol{k+1}$ such that $\lambda_n(w) = \mu$ and $k = n - \ell(\mu)$: the shuffle for which the negative letters appear in positions
$n + 1 - \mu_1, n + 1 - \mu_2, \dots , n + 1 - \mu_{\ell(\mu)}$.
\end{proof}

Observe that the first step of the proof shows that, in particular, $Y_n$ contains all the elements $w \in X_n$ of rank at most $n$. Given that the morphisms $Y_{n-1} \to Y_{n}$ preserve rank, we deduce the following result.

\begin{corollary}
For every $N$ and $n \geq N$, the truncation of $Y_n$ to elements of rank at most $N$ is isomorphic to $\mathcal{SP}_{\leq N}$.
\end{corollary}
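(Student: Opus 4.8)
The plan is to push the statement through the rank-preserving map $\lambda_n \colon Y_n \to \mathcal{SP}$ constructed in the proof of the preceding theorem, so that the corollary collapses to a one-line inequality between $|\mu|$ and $\mu_1$. First I would assemble what is already available about $\lambda_n$. The theorem's proof shows that $\lambda_n$ sends cover relations of $Y_n$ to cover relations of $\mathcal{SP}$ and that it restricts to a bijection onto $\{\mu \in \mathcal{SP} \mid \mu_1 \leq n\}$. Moreover, since $\mathcal{SP}$ is identified with the colimit of the sequence of poset embeddings $\tau_1, \tau_2, \dots$ having structure maps $\lambda_n$, and the canonical maps out of a sequential colimit of poset embeddings are themselves embeddings (each such map is injective and order-reflecting because the composites $Y_n \to Y_m$ are embeddings), the map $\lambda_n$ is in fact a poset \emph{isomorphism} $Y_n \xrightarrow{\ \sim\ } \{\mu \in \mathcal{SP} \mid \mu_1 \leq n\}$.

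Next I would pin down the rank function. Because $\lambda_n$ carries the minimum ${\rm id}_n$ of $Y_n$ to $\varnothing$ and carries each cover to a cover of $\mathcal{SP}$, which adds exactly one box, every saturated chain from ${\rm id}_n$ to $w$ maps to a saturated chain from $\varnothing$ to $\lambda_n(w)$; hence the rank (height above the minimum) of $w$ in $Y_n$ equals $|\lambda_n(w)|$, and in particular this height is well defined since $Y_n$ is an order ideal of the graded poset $X_n \cong \mathcal{I}_{C_n}$. The heart of the argument is then the comparison of truncations: restricting the isomorphism above to elements of rank at most $N$ yields a poset isomorphism from the rank-$\leq N$ truncation of $Y_n$ onto $\{\mu \in \mathcal{SP} \mid \mu_1 \leq n \text{ and } |\mu| \leq N\}$. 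For $n \geq N$ the first condition is automatic, since any strict partition with $|\mu| \leq N$ satisfies $\mu_1 \leq |\mu| \leq N \leq n$, so the target set is exactly $\mathcal{SP}_{\leq N}$, which proves the corollary. Equivalently, I could phrase the same computation through the diagram: each $\tau_n$ preserves rank because $\lambda_{n+1}\circ\tau_n = \lambda_n$ and both record $|\mu|$, and for $n \geq N$ it is surjective onto the rank-$\leq N$ elements, so the truncations of $Y_n$ stabilize and agree with the rank-$\leq N$ part of the colimit $\mathcal{SP}$.

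I do not expect a genuine obstacle, as all the substantive work is already carried out in the preceding theorem; the only points demanding care are bookkeeping ones. Specifically, I must confirm that $\lambda_n$ is order-\emph{reflecting}, not merely cover-preserving, so that restricting it to a rank ideal produces an isomorphism of posets rather than just a rank-preserving bijection, and I must confirm that "rank at most $N$" is unambiguous on $Y_n$. Both follow directly from the colimit description and from the identification of the rank function with $|\lambda_n(w)|$, so no new ideas are required.
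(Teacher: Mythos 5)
Your proof is correct and takes essentially the same route as the paper: the paper likewise deduces the corollary directly from the preceding theorem, observing that $Y_n$ contains all elements of rank at most $n$ and that the embeddings $\tau_n$ preserve rank, which is your inequality $\mu_1 \leq |\mu| \leq N \leq n$ in different clothing. Your additional care in checking that $\lambda_n$ is order-reflecting (via the fact that the canonical maps into a sequential colimit of poset embeddings are themselves embeddings) and that rank in $Y_n$ equals $|\lambda_n(w)|$ simply makes explicit details the paper leaves implicit.
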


\begin{corollary}
For any $w \in Y_n$, the number of maximal chains ${\rm id}_n \rightharpoonup \dots \rightharpoonup w$ in $X_n$
is the number of shifted standard Young tableaux of shape $\lambda_n(w)$.
\end{corollary}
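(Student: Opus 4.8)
The plan is to reduce the enumeration of maximal chains in $X_n$ to an enumeration of saturated chains in the lattice $\mathcal{SP}$ of strict partitions, and then to invoke the classical correspondence between such chains and shifted standard Young tableaux. The three ingredients are the order-ideal property of $Y_n$, the fact that $\lambda_n$ is a poset isomorphism onto its image, and the shifted analogue of the bijection between saturated chains in Young's lattice and standard Young tableaux.

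First I would use that $Y_n$ is an order ideal of $X_n$ with minimum ${\rm id}_n$. Since $w \in Y_n$, every element lying weakly below $w$ in $X_n$ already belongs to $Y_n$; hence the cover relations of $X_n$ among elements $\le w$ coincide with those of $Y_n$, and every maximal chain ${\rm id}_n \rightharpoonup \dots \rightharpoonup w$ is a saturated chain of the interval $[{\rm id}_n, w]$ inside $Y_n$. It therefore suffices to count saturated chains from ${\rm id}_n$ to $w$ in $Y_n$.

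Next I would transport this count through $\lambda_n$. The preceding theorem shows that $\lambda_n$ is a bijection from $Y_n$ onto $\{\mu \in \mathcal{SP} \mid \mu_1 \le n\}$ that sends covers to covers and satisfies $\lambda_n({\rm id}_n) = \varnothing$. I would strengthen this to the assertion that $\lambda_n$ restricts to an isomorphism of the interval $[{\rm id}_n, w]$ onto $[\varnothing, \lambda_n(w)]$ in $\mathcal{SP}$: concretely, one verifies that $\lambda_n^{-1}$ also sends covers to covers, matching the three cover moves of \cref{eq:cover-rel} against the operation of adjoining a single box to a shifted diagram, using the description of elements of $Y_n$ as shuffles of $1\,2\cdots k$ and $\ol{n}\cdots\ol{k+1}$. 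Because every $\nu \le \lambda_n(w)$ satisfies $\nu_1 \le (\lambda_n(w))_1 \le n$, the interval $[\varnothing, \lambda_n(w)]$ lies entirely in the image of $\lambda_n$, so saturated chains from ${\rm id}_n$ to $w$ correspond bijectively to saturated chains from $\varnothing$ to $\lambda_n(w)$.

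Finally I would invoke the description of $\mathcal{SP}$ as the shifted Young's lattice, in which each cover adjoins exactly one box to the shifted diagram. A saturated chain $\varnothing = \mu^{(0)} \lessdot \mu^{(1)} \lessdot \dots \lessdot \mu^{(m)} = \lambda_n(w)$ is then recorded by the filling placing the label $j$ in the unique box of $\mu^{(j)} \setminus \mu^{(j-1)}$; this filling increases along rows and columns, so it is a shifted standard Young tableau of shape $\lambda_n(w)$, and the assignment is a bijection. Chaining the three steps yields the claimed equality. I expect the main obstacle to be the strengthening in the second step, namely checking that $\lambda_n$ \emph{reflects} cover relations and is thus a genuine isomorphism of the relevant intervals, since a rank-preserving, cover-preserving bijection need not be an isomorphism in general; the explicit shuffle model makes this verification routine but it does need to be carried out.
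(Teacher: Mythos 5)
Your proposal is correct and takes essentially the same route as the paper, which states this corollary without separate proof as a direct consequence of the preceding theorem: reduce to $Y_n$ via the order-ideal property, transport saturated chains through $\lambda_n$ to the interval $[\varnothing,\lambda_n(w)]$ in $\mathcal{SP}$, and invoke the standard bijection between saturated chains in the shifted Young lattice and shifted standard Young tableaux. The one point you rightly flag — that $\lambda_n$ must also \emph{reflect} covers, since a cover-preserving bijection need not be an isomorphism — is left implicit in the paper, and the verification is indeed routine from the shuffle description (each box added to $\lambda_n(w)$ corresponds either to swapping a negative letter with the positive letter just before it, or to negating the last letter).
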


\begin{example}
The signed permutation $w = 1 \mkern 1mu 2 \mkern 1mu 3 \mkern 1mu \ol{6} \mkern 1mu 4 \mkern 1mu \ol{5} \in Y_6$ has negative entries in the fourth and sixth positions,
so $\lambda_6(w) = (7-4,7-6) = (3,1)$.
Hence, there are exactly two maximal chains in the interval $[{\rm id}_6, w]$ of $X_6$ (corresponding to the shifted SYT
${\begin{gathered}\YSF\YBF{0}{0}{1}\YBF{1}{0}{2}\YBF{2}{0}{3}\YBF{1}{1}{4}\YF\end{gathered}}$ and
${\begin{gathered}\YSF\YBF{0}{0}{1}\YBF{1}{0}{2}\YBF{2}{0}{4}\YBF{1}{1}{3}\YF\end{gathered}}$).
\end{example}

\section{Types B and D}\label{s:BandD}

We now have a solid understanding for the Ehrhart $h^*$-polynomials of hypersimplices for types A and C. In this section, we will explore what is fundamentally different for types B and D and conclude with an intriguing conjecture.

We begin with the case of type B. For $n = 1,2$, the root systems of types $B_n$ and $C_n$ are isomorphic. Thus, we assume $n \geq 3$. Let $\Phi \subset \RR^n$ be the root system of type $B_n$, whose set of simple roots given by
\[
\alpha_i = e_i - e_{i+1}, \hspace{.6cm}\text{for } 1 \leq i \leq n-1,
\hspace{1cm}\text{and}\hspace{1cm}
\alpha_n = e_n.
\]
Its highest root is
\[
\alpha_1 + 2 \alpha_2 + \dots + 2 \alpha_n = e_1 + e_2,
\]
and the nonzero vertices of the fundamental alcove are
\[
\omega_1 = e_1 ,\quad
\tfrac{\omega_2}{2} = \tfrac{1}{2} ( e_1 + e_2 ) ,\quad
\dots , \quad
\tfrac{\omega_{n-1}}{2} = \tfrac{1}{2} ( e_1 + \dots + e_{n-1}), \quad
\tfrac{\omega_n}{2} = \tfrac{1}{2} ( e_1 + \dots + e_n).
\]
The lattice $\mathcal L$ spanned by the vertices of the fundamental alcove is
\[
\mathcal{L} = \{ v \in \tfrac{1}{2}\ZZ^n \mid v_1 - v_2 \in \ZZ \}.
\]
Unlike in type C, the lattice spanned by the vertices of the fundamental alcove does not agree with the set of vertices of all the alcoves in $\HH_{\Phi}$. That is to say, $\mathcal{L}$ is not invariant under the action of $\wt{W}_\Phi$. For instance, the reflection of the point $\tfrac{\omega_2}{2} = (\frac{1}{2},\frac{1}{2},0) \in \mathcal{L}$ across the hyperplane $H_{e_2-e_3,0} \in \HH$ is the point $(\frac{1}{2},0,\frac{1}{2}) \notin \mathcal{L}$. Therefore, not all alcoves are lattice polytopes with respect to $\mathcal{L}$. Alternatively, if we consider the lattice spanned by the vertices of all alcoves, namely $\tfrac{1}{2}\ZZ^n$, we find that the alcoves are not unimodular; in fact, they have a normalized volume of $2$.

The same is true in type $D_n$, for $ n \geq 4$. Namely, the lattice spanned by the vertices of the fundamental alcove is not invariant under the action of  $\wt{W}_{\Phi}$. In particular, we cannot apply the ideas in \cref{ss:proof2} to compute the $h^*$-polynomials of type B and D hypersimplices by studying the weak order restricted to the alcoves contained in the fundamental parallelepiped. However, computational evidence suggests an intriguing relationship between $\Psi_{B_n}(t)$ and the type D Eulerian polynomial $E_{D_n}(t)$.

\begin{conjecture}
For $n \geq 3$, 
\[{\Psi_{B_n}(t) + t^n \Psi_{B_n}(t^{-1})} = 2 E_{D_n}(t).\]
\end{conjecture}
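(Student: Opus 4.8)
The plan is to transport the proof of \cref{t:half-weak2} from type C to type B. In type C the identity rested on three facts: the formula $\Psi_{C_n}(t) = \sum_{w \in X_n} t^{\des_B(w)}$ coming from the poset isomorphism \cref{t:poset} (see \cref{t:half-weak1}); the splitting $\hypoct_n = X_n \sqcup \ol{X_n}$ under negation; and the complementation $\des_B(w) + \des_B(\ol w) = n$. I would set up the exact analog, with $\des_B$ replaced by a type D descent statistic and $E_{B_n}$ by $2E_{D_n}$. The factor $2$ is forced by the index of $W_{D_n}$ in $\hypoct_n$ together with the count $|\mathcal I_{B_n}| = n!\,a_1\cdots a_n = 2^{n-1}n! = |W_{D_n}|$ (see \Cref{eq:volPi}).

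I would begin with the group-theoretic side, which is clean. Every signed permutation preserves $\Phi_D = \{\pm e_i \pm e_j\}$, so $\des_D(w) := |\{i \in [n] : w(\alpha_i^D) \in \Phi_D^-\}|$ is defined for all $w \in \hypoct_n$ (here $\alpha_1^D, \dots, \alpha_n^D$ are the type $D_n$ simple roots) and restricts to the usual Coxeter descent statistic on $W_{D_n}$. Writing $\hypoct_n = W_{D_n} \sqcup s\,W_{D_n}$ with $s$ the sign change on the last coordinate, one checks that $s$ merely swaps $\alpha_{n-1}^D$ and $\alpha_n^D$ and fixes the other simple roots; hence $s$ preserves $\Phi_D^-$ and $\des_D(su) = \des_D(u)$, giving $\sum_{w \in \hypoct_n} t^{\des_D(w)} = 2E_{D_n}(t)$. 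Similarly $\ol w = (-1)\cdot w$ and $-1$ interchanges $\Phi_D^+$ with $\Phi_D^-$, so $\des_D(w) + \des_D(\ol w) = n$. These mirror precisely the properties of $\des_B$ used for \cref{t:half-weak2}.

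It then remains to produce a bijection $\beta : \mathcal I_{B_n} \to Z_n$ onto a transversal $Z_n$ of the negation involution (so that $\hypoct_n = Z_n \sqcup \ol{Z_n}$) with $\mathrm{cover}(A) = \des_D(\beta(A))$; this is the type B counterpart of \cref{t:poset}. Given $\beta$, one has $\Psi_{B_n}(t) = \sum_{w \in Z_n} t^{\des_D(w)}$ and therefore
\[
\Psi_{B_n}(t) + t^n\Psi_{B_n}(t^{-1}) = \sum_{w \in Z_n} t^{\des_D(w)} + \sum_{w \in Z_n} t^{\,n - \des_D(w)} = \sum_{w \in \hypoct_n} t^{\des_D(w)} = 2E_{D_n}(t),
\]
as desired. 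One cautionary constraint guides the search for $Z_n$: it cannot be $X_n$ itself, since both the identity and $s$ lie in $X_n$ with $\des_D = 0$, so $\sum_{w \in X_n} t^{\des_D(w)}$ has constant term $2$ whereas $\Psi_{B_n}$ has constant term $1$. The correct transversal must be dictated by the weak order.

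Constructing $\beta$ is the main obstacle, and it is exactly the step the paper sidesteps for type B. The type C argument placed the vertices of $A(w)$ in $\tfrac12\ZZ^n$ and used crucially that $\tfrac12\ZZ^n$ is $\wt{W}_{C_n}$-invariant; in type B the lattice spanned by the fundamental alcove is not $\wt{W}_{B_n}$-invariant and the alcoves are not unimodular, so that vertex bookkeeping does not transfer. I would instead aim to describe the weak order on $\mathcal I_{B_n}$ intrinsically, via a type B big-ascent cover rule on $Z_n$, matching cover relations with alcove adjacencies and the cover count with $\des_D$. Failing a full bijection, a fallback is to show that $\Psi_{B_n}(t)$ and $\sum_{Z_n} t^{\des_D}$ satisfy a common linear recurrence in $n$: the polynomials $E_{D_n}(t)$ obey a recurrence of the shape in \cref{c:recurrence}, the $\des_D$ side is handled by inserting $\pm n$, and it would remain to derive the matching recurrence for $\Psi_{B_n}$ from the passage $\mathcal I_{B_{n-1}} \to \mathcal I_{B_n}$, which is where the genuine difficulty lies. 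The central symmetry $x \mapsto 2c_0 - x$ of $\Pi_{B_n}$ reverses the weak order and is suggestive for the symmetrized identity directly, but facets lying on $\partial\Pi_{B_n}$ obstruct a clean count.
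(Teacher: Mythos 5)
This statement is left as a \emph{conjecture} in the paper: the authors offer only computational evidence (and explicitly explain why their type C machinery breaks down for type B), so there is no proof to match yours against. The question is therefore whether your proposal closes the conjecture on its own, and it does not. The parts you do prove are correct: since the diagram automorphism $s$ (negation of the last coordinate) preserves $\Phi_{D}^+$, one indeed gets $\sum_{w \in \hypoct_n} t^{\des_D(w)} = 2E_{D_n}(t)$; since $-1$ exchanges $\Phi_D^+$ and $\Phi_D^-$, one gets $\des_D(w) + \des_D(\ol{w}) = n$; and your observation that $X_n$ cannot serve as the transversal (both ${\rm id}$ and $s$ lie in $X_n$ with $\des_D = 0$, while $\Psi_{B_n}$ has constant term $1$) is a genuine and useful constraint. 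But all of this only reduces the conjecture to the existence of a transversal $Z_n$ of the negation involution and a bijection $\beta : \mathcal{I}_{B_n} \to Z_n$ with ${\rm cover}(A) = \des_D(\beta(A))$ --- that is, to a type B analogue of \cref{t:poset}. You never construct $\beta$, and you say yourself that this is ``the main obstacle'' and ``exactly the step the paper sidesteps.'' A reduction of an open conjecture to another unproven statement, however well-motivated, is not a proof; note also that since $\Psi_{B_n}$ is defined purely order-theoretically via \Cref{eq:def-Psi} (the Ehrhart/shelling interpretation is unavailable here precisely because the type B alcoves are not unimodular for any invariant lattice), the bijection would have to be built directly from the weak order, with no Stanley-shelling shortcut.

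Your fallback plan also rests on a false premise. The polynomials $E_{D_n}(t)$ do \emph{not} satisfy the linear polynomial recurrence of \cref{c:recurrence}: taking $E_{D_3}(t) = 1 + 11t + 11t^2 + t^3$ (recall $W_{D_3} \cong \mathfrak{S}_4$) and applying $f_n = (1+(2n-1)t)f_{n-1} + 2t(1-t)f'_{n-1}$ gives $1 + 40t + 110t^2 + 40t^3 + t^4$, whereas $E_{D_4}(t) = 1 + 44t + 102t^2 + 44t^3 + t^4$. The known recurrences for type D Eulerian polynomials involve correction terms from types A/B (e.g. $E_{D_n}(t) = E_{B_n}(t) - n2^{n-1}t\,E_{A_{n-2}}(t)$), so the ``common recurrence'' strategy would at minimum need to be reformulated around such a mixed identity, and the corresponding recurrence for $\Psi_{B_n}$ --- which you acknowledge is the hard part --- is again not derived. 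In short: sound framing, correct auxiliary lemmas, but the conjecture remains open after your argument.
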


For example, for $n = 4$,
\[
{(1 + 56 t + 102 t^2 + 32 t^3 + t^4) + (1 + 32 t + 102 t^2 + 56 t^3 + t^4)} = 2 (1 + 44 t + 102 t^2 + 44 t^3 + t^4).
\]
Compare the conjecture with \cref{t:half-weak2}.
\cref{tab:typesBD} below show the coefficients of the polynomials $\Psi_{B_n}(t)$ and $\Psi_{D_n}(t)$ for small values of $n$.

\begin{table}[ht]
\centering
\small
\begin{tabular}{c|l|l}
$n$ & $\Psi_{B_n}(t)$ & $\Psi_{D_n}(t)$ \\
\hline
$3$ & $1+15t+7t^2+t^3$ & $1+4t+t^2$ \\
$4$ & $1+56t+102t^2+32t^3+t^4$ & $1+22t+18t^2+6t^3+t^4$ \\
$5$ & $1+189t+898t^2+706t^3+125t^4+t^5$ & $1+85t+222t^2+138t^3+33t^4+t^5$ \\
$6$ & $1+610t+6351t^2+10876t^3+4751t^4+450t^5+t^6$ & $1+294t+1895t^2+2380t^3+1047t^4+142t^5+t^6$
\end{tabular}
\caption{The first values of $\Psi_{B_n}(t)$ and $\Psi_{D_n}(t)$.}
\label{tab:typesBD}
\end{table}

$\Psi_{D_n}(t)$ is not real-rooted for $n = 4,5,6$.
$\Psi_{B_n}$ is real-rooted for $n=4,5,6$ and not for $n=3$

\bibliographystyle{plain}
{
\bibliography{Bibliography}}

\end{document}